\begin{document}

\title{The reciprocal complements of classes of integral domains}
\author{Lorenzo Guerrieri \thanks{ Jagiellonian University, Instytut Matematyki, 30-348 Krak\'{o}w \textbf{Email address:} lorenzo.guerrieri@uj.edu.pl }  
}
\maketitle

\begin{abstract}

\noindent Given an integral domain $D$ with quotient field $\mathcal{Q}(D)$, the reciprocal complement of $D$ is the subring $R(D)$ of $\mathcal{Q}(D)$ whose elements are all the sums $\frac{1}{d_1}+\ldots+\frac{1}{d_n} $ for $d_1, \ldots, d_n$ nonzero elements of $D$.
In this article we study problems related with prime ideals, localizations and Krull dimension of rings of the form $R(D)$ and we describe the reciprocal complements of classes of domains, including semigroup algebras and $ D+ \mathfrak{m} $ constructions. We also characterize when $R(D)$ is a DVR.
\medskip

\noindent MSC: 13G05, 13A15, 13B30, 13F99, 20M25. \\
\noindent Keywords: reciprocal complements, Egyptian fractions, Egyptian integral domains.
\end{abstract}

\newtheorem{thm}{Theorem}[section]
\newtheorem{lemma}[thm]{Lemma}
\newtheorem{prop}[thm]{Proposition}
\newtheorem{cor}[thm]{Corollary}
\newtheorem{problem}[thm]{Problem}
\newtheorem{construction}[thm]{Construction}

\theoremstyle{definition}
\newtheorem{defi}[thm]{Definitions}
\newtheorem{definition}[thm]{Definition}
\newtheorem{rem}[thm]{Remark}
\newtheorem{example}[thm]{Example}
\newtheorem{question}[thm]{Question}
\newtheorem{conjecture}[thm]{Conjecture}
\newtheorem{comments}[thm]{Comments}

\newtheorem{discussion}[thm]{Discussion}

\newcommand{\N}{\mathbb{N}}
\newcommand{\m}{\mathfrak{m}}
\newcommand{\p}{\mathfrak{p}}
\newcommand{\q}{\mathfrak{q}}
\newcommand{\Z}{\mathbb{Z}}
\newcommand{\Q}{\mathbb{Q}}
\newcommand{\al}{\boldsymbol{\alpha}}
\newcommand{\be}{\boldsymbol{\beta}}
\newcommand{\de}{\boldsymbol{\delta}}
\newcommand{\e}{\textbf{e}}
\newcommand{\om}{\boldsymbol{\omega}}
\newcommand{\g}{\boldsymbol{\gamma}}
\newcommand{\te}{\boldsymbol{\theta}}
\newcommand{\he}{\boldsymbol{\eta}}
\def\min{\mbox{\rm min}}
\def\max{\mbox{\rm max}}
\def\ff{\frak}
\def\Spec{\mbox{\rm Spec }}
\def\Zar{\mbox{\rm Zar }}
\def\Proj{\mbox{\rm Proj }}
\def\hgt{\mbox{\rm ht }}
\def\type{\mbox{ type}}
\def\Hom{\mbox{ Hom}}
\def\rank{\mbox{ rank}}
\def\Ext{\mbox{ Ext}}
\def\Ker{\mbox{ Ker}}
\def\Max{\mbox{\rm Max}}
\def\End{\mbox{\rm End}}
\def\xpd{\mbox{\rm xpd}}
\def\Ass{\mbox{\rm Ass}}
\def\emdim{\mbox{\rm emdim}}
\def\epd{\mbox{\rm epd}}
\def\repd{\mbox{\rm rpd}}
\def\ord{\mbox{\rm ord}}
\def\gcd{\mbox{\rm gcd}}
\def\Tr{\mbox{\rm Tr}}
\def\Res{\mbox{\rm Res}}
\def\Ap{\mbox{\rm Ap}}
\def\sdefect{\mbox{\rm sdefect}}
\maketitle

\section{Introduction}

Given an integral domain $D$ with quotient field $\mathcal{Q}(D)$, the reciprocal complement of $D$ is the subring $R(D)$ of $\mathcal{Q}(D)$ generated by all the elements $\frac{1}{d} $ for all nonzero elements $d \in D$. 
Thus, the elements of $R(D)$ are all the sums $\frac{1}{d_1}+\ldots+\frac{1}{d_n} $ for $d_1, \ldots, d_n$ nonzero elements of $D$.
This ring construction has been introduced in \cite{Eps3}, motivated by the study of Egyptian fractions in arbitrary integral domains.

Classically, an Egyptian fraction is a representation of a rational number as a sum of distinct unit fractions. The fact that any rational number admits representations as an Egyptian fraction was probably known already to ancient Egyptians, and for sure was known to Fibonacci, who wrote a proof of this fact in his book Liber Abaci (see \cite{fibonacci}). In modern time, Egyptian fractions have been deeply studied in the context of number theory. 

The concept of Egyptian fraction has been extended from the setting of integers and rational numbers to arbitrary integral domains in \cite{GLO} and further studied in \cite{Eps1}, \cite{Eps2}, \cite{Eps3}, \cite{Epsmonthly}. In \cite{Eps2} the same notion has been generalized also to the setting of rings with zerodivisors.

A integral domain $D$ is defined to be \it Egyptian \rm if every nonzero element of $D$ can be expressed as a sum of distinct unit fractions in $D$, i.e. as a sum $\frac{1}{d_1}+\ldots+\frac{1}{d_n} $ for $d_1, \ldots, d_n$ distinct nonzero elements of $D$ (however, in \cite[Theorem 2]{GLO} it is shown that the assumption of having distinct denominators can be removed).

Many integral domains are Egyptian: for instance the ring of integer $\Z$, any semilocal domain, any overring and any integral extension of an Egyptian domain. The easiest example of a non-Egyptian domain is a polynomial ring $A[X]$ where $A$ is any integral domain and $X$ is an indeterminate over $A$. In this case every non-constant polynomial cannot be represented as a sum of reciprocals of elements in $A[X]$ (for all these results see \cite{GLO}). 

Hence, it turned out that, even if $\Z$ and $K[X]$ (where $K$ is any field) are both very well-behaved Euclidean domains, the first one is Egyptian and the second is not. To understand better this difference, the reciprocal complement $R(D)$ of a domain $D$ has been defined in \cite{Eps3}, and it has been immediately observed that an integral domain $D$ is Egyptian if and only if $R(D)$ coincides with the whole quotient field of $D$.

Therefore, the ring $R(D)$ can be used to measure how far a domain is from being Egyptian. While clearly $R(\Z)=\Q$, it has been shown in \cite{Eps3} that $R(K[X])= K[X^{-1}]_{(X^{-1})}$ and, more in general, the reciprocal complement of any Euclidean domain is either a field or a DVR.

Given now any non-Egyptian domain $D$ (more classes of them have been described in \cite{GLO}, \cite{Eps1}), it is an interesting problem to describe the properties of the ring $R(D)$ from the point of view of multiplicative ideal theory.  

This problem has been considered in \cite{EGL} in the case $D=K[X_1, \ldots, X_n]$ is a polynomial ring in several variables over a field. If $n \geq 2$, the results obtained are quite surprising compared to the case $n=1$.

Setting $R_n= R(K[X_1, \ldots, X_n])$, we have that $R_n$ has some of the features of the localized polynomial ring $ K[X_1, \ldots, X_n]_{(X_1, \ldots, X_n)} $. Indeed, $R_n$ is a local domain of Krull dimension $n$ with infinitely many prime ideals of every height $i=1, \ldots, n-1$. It has also special prime ideals $\q_i$ for $i < n$ such that 
$  \frac{R_n}{\q_i} \cong R_i, $ and $ (R_n)_{\q_i}= R(K(X_1, \ldots, X_i)[X_{i+1}, \ldots, X_n]). $ Moreover, still for $i < n$, it satisfies the equality $R_n \cap K(X_1, \ldots, X_i)= R_i$.

However, $R_n$ presents also a very different behavior with respect to a regular local ring. For instance, the element  $\frac{1}{X_1X_2\cdots X_n}$ is contained in every nonzero prime ideal of $R_n$. This makes $R_n$ non-Noetherian if $n\geq 2$, and furthermore, in the case $n\geq 2$, it has also been proved that $R_n$ is not integrally closed. 

All these results are a good motivation to continue the study of the ring $R(D)$ as a general ring construction with the aim of understanding its structure for more classes of integral domains.

In Section 2 of this article, we first focus on general results related with the prime spectrum of $R(D)$ for arbitrary integral domain $D$. We prove in Proposition \ref{primeideals} that for every nonzero element $x \in D$, there exists a unique prime ideal $\p_x$ of $R(D)$, maximal with respect to the property of excluding the element $\frac{1}{x}$. Conversely, if $R(D)$ has finite Krull dimension, we prove in Theorem \ref{pseudoradical} that every prime ideal of $R(D)$ is of the form $\p_x$ for some $x \in D$. In particular, in this case every localization of $R(D)$ at a prime ideal
is equal to $R(S^{-1}D)$ for some multiplicatively closed subset $S$ of $D$, and $R(D)$ has nonzero pseudoradical (the pseudoradical of a domain is the intersection of all nonzero prime ideals).

A similar result is obtained in Theorem \ref{quotients} for the quotients of $R(D)$ by prime ideals. It is shown that any quotient $R(D)/\p$ is equal to $R(L)$ for some subring $L \subseteq D$.

Along the way, regarding the open problem of characterizing when $R(D)$ is Noetherian, it is shown in Corollary \ref{noeth1} that, if $R(D)$ is Noetherian, then its Krull dimension cannot exceed one. 

Another problem that we consider in this article is the computation of the Krull dimension of $R(D)$. The following general conjecture was proposed in \cite[Question 5]{EGL}:
\begin{conjecture} 
\label{conj1}
Let $D$ be an integral domain and let $R(D)$ be its reciprocal complement. Then
 $\dim (R(D)) \leq \dim (D)$.
\end{conjecture}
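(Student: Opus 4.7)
The plan is to reduce to the finite-dimensional setting and then induct on $n=\dim R(D)$ using the structural results of Section~2.

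First, I would dispose of the case $\dim R(D)=\infty$. For any positive integer $n$, pick a chain $\p_0\subsetneq\cdots\subsetneq\p_n$ in $R(D)$ and localize at $\p_n$: by Theorem~\ref{pseudoradical}, $R(D)_{\p_n}=R(S^{-1}D)$ for some multiplicative subset $S\subseteq D$, and the chain survives in this localization. Since $\dim S^{-1}D\leq \dim D$, the finite-dimensional case applied to $S^{-1}D$ would give $n\leq \dim S^{-1}D\leq \dim D$, and letting $n\to\infty$ would force $\dim D=\infty$ as well.

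In the finite-dimensional case I induct on $n=\dim R(D)$. The base $n=0$ is immediate. For the inductive step, fix a chain $\p_0\subsetneq\cdots\subsetneq\p_n$ and write $\p_i=\p_{x_i}$ via Theorem~\ref{pseudoradical}; the maximality defining $\p_{x_i}$, combined with $\p_i\subsetneq\p_{i+1}$, forces $1/x_i\in\p_{i+1}\setminus\p_i$ for each $i$. Theorem~\ref{quotients} then gives $R(D)/\p_0=R(L)$ for a subring $L\subseteq D$, and the reduced chain of length $n-1$ together with the inductive hypothesis yields $\dim L\geq n-1$. What remains is to produce one further prime of $D$ sitting above a chain lifted from $L$, corresponding to the drop from $\p_0$ to $(0)$ in $R(D)$.

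The main obstacle is precisely this last comparison between $L$ and $D$. Two features make it delicate: the subring $L$ in Theorem~\ref{quotients} can be substantially smaller than $D$ (as the case $D=K[X]$, where $R(D)=K[X^{-1}]_{(X^{-1})}$ and the only reasonable candidate for $L$ is $K$, already illustrates), so lifting a prime chain of $L$ to one in $D$ is not automatic; and the naive contraction $\{d\in D:1/d\in\p_0\}$ of $\p_0$ to $D$ is in general not an ideal, so the extra prime provided by $\p_0$ is not directly visible on the $D$ side. I would try to overcome both by a valuation-theoretic detour: dominate $R(D)_{\p_0}$ by a valuation ring $V$ of $\mathcal{Q}(D)$ whose prime spectrum refines the chain, and take the center of $V$ on $D$ together with successive centers of its specializations to obtain the required prime chain in $D$. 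Controlling these centers height-by-height so that they match the heights of the $\p_i$ is, I expect, the crux of the whole conjecture and the reason it is stated here as an open problem rather than proved in full generality.
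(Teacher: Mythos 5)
This statement is Conjecture~\ref{conj1}, reproduced from \cite[Question 5]{EGL}; the paper does not prove it and explicitly treats it as open. What the paper does prove are special cases by routes quite different from yours: Theorem~\ref{fgalgebras} handles finitely generated $K$-algebras by inducting on $\dim(D)$ (not on $\dim(R(D))$), using Zariski's Lemma to show that inverting $K[x]\setminus\{0\}$ strictly drops the dimension of $D$ while realizing $R(D)_{\p_x}$ as $R(S^{-1}D)$; and for semigroup algebras the inequality is obtained for free from the Jaffard property once $R(D)$ is identified with an overring of $D$. Sections 4 and 5 then show the inequality can be strict by arbitrarily large amounts. So your attempt cannot be measured against a proof in the paper, only against these partial results.

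As a proof attempt your argument has two concrete gaps. First, the reduction of the infinite-dimensional case is unjustified: Theorem~\ref{pseudoradical} and Corollary~\ref{localizcor} are proved under the hypothesis that $R(D)$ has \emph{finite} Krull dimension, so you cannot invoke them to write $R(D)_{\p_n}=R(S^{-1}D)$ when $\dim R(D)=\infty$; Remark~\ref{infinitedimension} shows that for $D=K[X_1,X_2,\ldots]$ not every prime of $R(D)$ is of the form $\p_x$, so the conclusion you need genuinely fails without that hypothesis. Second, and more fundamentally, the inductive step never closes: after quotienting by the smallest nonzero prime of the chain you obtain $R(D)/\p\cong R(L)$ with $\dim L\geq n-1$, but the conjecture would require $\dim D\geq \dim L+1$, i.e.\ lifting a maximal chain of $\operatorname{Spec}(L)$ into $\operatorname{Spec}(D)$ and finding one additional prime on top. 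Nothing in the paper controls how $\operatorname{Spec}(L)$ sits inside $\operatorname{Spec}(D)$ (the subring $L$ can collapse drastically, as your own $K[X]$ example shows), the set $\{d\in D: 1/d\in\p\}$ is not an ideal of $D$, and the valuation-theoretic domination you sketch is not carried out. You correctly identify this as the crux; since it is exactly the point left open, the proposal is an outline of an approach rather than a proof, and the statement remains a conjecture.
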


In Section 3, we give a positive answer to this conjecture in the case where $D$ is a finitely generated algebra over a field (see Theorem \ref{fgalgebras}). Moreover, we use the same method adopted in the proof of this result to construct an example of a principal ideal domain $D$ such that $R(D)$ is not integrally closed, hence not a DVR (and not isomorphic to an overring of $D$), showing that the results true for Euclidean domains can fail already in the class of PIDs.

In Section 4, we study the reciprocal complements of semigroup algebras defined over a field, focusing on the case where the defining semigroup $S$ is contained in the positive part of a totally ordered abelian group $G$ of finite rank. In this case we prove that $R(K[S])$ is isomorphic to an overring of the localized semigroup algebra (as already known in the specific case of polynomial rings) and, in the case where $S$ is equal to the positive part of $G$, the reciprocal complement $R(K[S])$ is a valuation domain, as in the case of the polynomial ring $K[X]$ (see Theorem \ref{totalordervaluation}). 
In Theorem \ref{numerical}, we describe explicitly $R(K[S])$ in the case when the positive part of $G$ is the integral closure of $S$ and $K$ has characteristic zero. This covers the particular case where $S \subseteq \N$ is a numerical semigroup. 
We also prove in Proposition \ref{noethsem} that $R(K[S])$ is Noetherian if and only if $K[S]$ is Noetherian and one dimensional,  and finally we prove that Conjecture \ref{conj1} holds true for semigroup algebras of our kind, and we compute the Krull dimension of $R(K[S])$ in some relevant cases in Theorem \ref{dimth1}.

In Section 5, we study the reciprocal complements of rings of the form $D+\m$, where $\m$ is a prime ideal of an overring of $D$ and, applying the obtained results to semigroup algebras, we construct in Theorem \ref{dimth2}, for every $n > m$, an integral domain $D$ such that $\dim(D)=n$ and $\dim(R(D))=m$.

Finally, in Section 6, we study the case when $R(D)$ is a DVR and we prove that in this case, after localizing $D$ by inverting all its Egyptian elements, we obtain an integral domain isomorphic to a polynomial ring in one variable over a field. This provides a partial converse to the main theorem of \cite{Eps3}.

\section{Prime ideals of reciprocal complements}

In the following $D$ will denote an integral domain with quotient field $\mathcal{Q}(D)$. By overring of $D$ we mean as usual a ring lying between $D$ and $\mathcal{Q}(D)$.

A nonzero element $x \in \mathcal{Q}(D)$ is \it $D$-Egyptian \rm if there exist nonzero distinct elements $d_1, \ldots, d_n \in D$ such that $$ x= \dfrac{1}{d_1} + \ldots + \dfrac{1}{d_n}. $$ Notice that by \cite[Theorem 2]{GLO} the assumption of $d_1, \ldots, d_n$ being distinct is in fact redundant in the context of integral domains. When there is no risk of confusion we omit the prefix $D$- and simply talk about Egyptian elements. 

An integral domain $D$ is said to be \it Egyptian \rm if every nonzero element of $D$ is Egyptian. The \it reciprocal complement (or ring of reciprocal complements) \rm of $D$ is defined as the subring $R(D)$ of $\mathcal{Q}(D)$ generated by all the elements $\frac{1}{d}$ for $d \in D \setminus \lbrace 0 \rbrace.$

The immediate connection between Egyptian domains and reciprocal complements stands in the fact that an integral domain $D$ is Egyptian if and only if $R(D) = \mathcal{Q}(D)$. \\

In this section we investigate properties of prime ideals, localizations and quotients of reciprocal complements for arbitrary integral domains.

We start by recalling the following main result from \cite{EGL}.

\begin{thm} \rm \cite[Theorem 2.6]{EGL} \it
 \label{islocal!!}
 Let $D$ be an integral domain. Then, the ring $R(D)$ is local and its maximal ideal is generated by all the elements $\frac{1}{d}$ where $d \in D$ is not an Egyptian element.
 \end{thm}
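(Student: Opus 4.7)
The plan is to define $\m$ to be the ideal of $R(D)$ generated by the set $\{1/d : d \in D \text{ is nonzero and non-Egyptian}\}$ and prove that $\m$ is the unique maximal ideal, with residue field the quotient field of $E := R(D) \cap D$. As a preliminary, I would check the multiplicative structure of $E$: if $d_1, d_2 \in D$ are both nonzero and $d_1 d_2 \in R(D)$, then $d_1 = (d_1 d_2)(1/d_2) \in R(D)$, so a product involving any non-Egyptian factor is itself non-Egyptian. Consequently $E$ is a subring of $D$ with $D \cap \mathcal{Q}(E) = E$, and $\mathcal{Q}(E) \subseteq R(D)$ consists of units of $R(D)$.

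The second ingredient is the decomposition $R(D) = \mathcal{Q}(E) + \m$: every $x = \sum_i 1/d_i$ in $R(D)$ splits as the sum over those $d_i \in E$ (a fraction with both numerator and denominator in $E$, hence in $\mathcal{Q}(E)$) plus the sum over those $d_i \notin E$ (in $\m$). The key technical point is then to prove $1 + m \in R(D)^\times$ for every $m \in \m$. The simplest case is the identity $(1 + 1/f)^{-1} = 1 - 1/(f+1) \in R(D)$. For general $m = \sum_{i=1}^n 1/g_i$ with each $g_i$ non-Egyptian (a normal form that every element of $\m$ admits), the identity
\[
1 + \sum_{i=1}^n \frac{1}{g_i} = \prod_{i=1}^n \left(1 + \frac{1}{g_i}\right) - m',
\]
where $m' \in \m$ collects the higher-order products of the $1/g_i$'s, expresses $1 + m$ as a product of units minus a correction term in $\m$; one should then invert this, either by iterating with $m'$ in place of $m$ or by writing an explicit inverse as a product of factors of the form $1 \pm 1/h$ with $h \in D$.

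The assembly is then standard: since $1 + \m \subseteq R(D)^\times$, the ideal $\m$ is contained in the Jacobson radical of $R(D)$, and in particular $\m$ is proper; therefore $\mathcal{Q}(E) \cap \m = 0$ (a nonzero element of $\mathcal{Q}(E)$ is a unit and cannot lie in a proper ideal), and $R(D)/\m \cong \mathcal{Q}(E)$ is a field, so $\m$ is maximal. Being simultaneously maximal and contained in the Jacobson radical, $\m$ is the unique maximal ideal of $R(D)$. The main obstacle is the verification that $1 + m$ is a unit for arbitrary $m \in \m$: for $n = 2$ one obtains the clean explicit inverse $(1 + 1/g_1 + 1/g_2)^{-1} = (1 - 1/(g_1+1))(1 - 1/(g_2+1))(1 + 1/(g_1 g_2 + g_1 + g_2))$, but extending this pattern to arbitrary $n$ appears to require either a careful induction on the complexity of $m'$ or a more conceptual substitute.
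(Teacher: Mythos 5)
Your architecture is essentially the paper's: split $R(D)$ as $\mathcal{Q}(E)$ plus the ideal $\mathfrak{m}$ generated by reciprocals of non-Egyptian elements, observe that every element of $\mathfrak{m}$ is again a finite sum $\sum_i 1/g_i$ with each $g_i$ non-Egyptian, and reduce everything to the claim that $1+m$ is a unit for every such $m$. (The paper first inverts the Egyptian elements using its Theorem~\ref{reductiontoKalgebra}, so that $\mathcal{Q}(E)$ becomes a field $K$ contained in $D$; your direct use of $\mathcal{Q}(E)$ is only cosmetically different.) The problem is that the one claim everything hinges on is exactly the one you do not prove. The identity $1+\sum_i 1/g_i=\prod_i(1+1/g_i)-m'$ does not set up a terminating induction: after factoring out the unit $U=\prod_i(1+1/g_i)$ you must show $1-U^{-1}m'$ is a unit, and $U^{-1}m'$ is no longer a sum of reciprocals of elements of $D$ with strictly fewer terms, so no complexity measure decreases, and there is no completeness available to sum a geometric series. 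Your $n=2$ formula is correct but comes with no mechanism for producing its analogue for $n\geq 3$. As written, the proof has a genuine hole at its center.

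The paper closes this gap with a telescoping device. Writing $\alpha_i=1/x_i$ and $y=1+\alpha_1+\cdots+\alpha_n$, it sets $H_k=(\alpha_1\cdots\alpha_k)/y$ and notes that $H_n=1/\bigl(x_1\cdots x_n+\sum_{i=1}^n\prod_{j\neq i}x_j\bigr)$ is the reciprocal of a single element of $D$, hence visibly in $R(D)$. It then descends from $H_n$ to $H_1$ via the identity $\alpha_1\cdots\alpha_k-H_{k+1}=\bigl(1+\sum_{i\neq k+1}\alpha_i\bigr)H_k$, where the bracketed factor is a unit by induction on $n$ (it involves only $n-1$ reciprocals), and concludes from $1-H_1=\bigl(1+\sum_{i\geq 2}\alpha_i\bigr)y^{-1}$ that $y^{-1}\in R(D)$. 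A second, smaller omission in your plan: this argument only shows that $u+m$ is \emph{zero or} a unit, so you still need to rule out $-1\in\mathfrak{m}$ (equivalently, that no element of $\mathfrak{m}$ is a unit) before you may conclude that $\mathfrak{m}$ is proper; the paper does this by a separate contradiction argument, assuming $\sum_i 1/x_i$ has an inverse $\sum_j 1/z_j$ and deducing that $x_1\in R(D)$, i.e.\ that $x_1$ is Egyptian. With those two ingredients supplied, your assembly --- $1+\mathfrak{m}\subseteq R(D)^\times$, hence $\mathfrak{m}$ lies in the Jacobson radical, $R(D)/\mathfrak{m}\cong\mathcal{Q}(E)$, hence $\mathfrak{m}$ is the unique maximal ideal --- is correct.
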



The next basic lemma studies the behavior of the reciprocal complement with respect to the localizations of $D$.

\begin{lemma}
\label{localizations}
Let $D$ be an integral domain and $S$ a multiplicatively closed subset of $D$. Then $R(S^{-1}D)= R(D)[S]$.
\end{lemma}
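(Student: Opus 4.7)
My plan is a straightforward double inclusion that amounts to unpacking the definitions of both sides inside the common ambient field $\mathcal{Q}(D)=\mathcal{Q}(S^{-1}D)$. I will interpret $R(D)[S]$ as the subring of $\mathcal{Q}(D)$ generated by $R(D)$ together with the elements of $S$, and I recall that $R(S^{-1}D)$ is by definition generated by the reciprocals of the nonzero elements of $S^{-1}D$.

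For the forward inclusion $R(D)[S]\subseteq R(S^{-1}D)$, I will check that both generating sets of the left-hand side already sit in the right-hand side. The generators $1/d$ of $R(D)$, for $d\in D\setminus\{0\}$, are reciprocals of nonzero elements of $D\subseteq S^{-1}D$, and so they lie in $R(S^{-1}D)$. For each $s\in S$, the element $1/s$ is a nonzero element of $S^{-1}D$, so its reciprocal $1/(1/s)=s$ is one of the defining generators of $R(S^{-1}D)$; hence $S\subseteq R(S^{-1}D)$ as well.

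For the reverse inclusion, it is enough to check that every generator $1/y$ of $R(S^{-1}D)$ lies in $R(D)[S]$. Writing $y=d/s$ with $d\in D\setminus\{0\}$ and $s\in S$, I obtain $1/y = s/d = s\cdot(1/d)$, which is the product of an element of $S$ with a generator of $R(D)$ and therefore lies in $R(D)[S]$. Taking sums then gives the full inclusion $R(S^{-1}D)\subseteq R(D)[S]$.

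I do not expect any real obstacle here: the lemma is essentially a bookkeeping exercise once one pins down the meaning of $R(D)[S]$ as adjunction inside $\mathcal{Q}(D)$ and uses the identity $s=1/(1/s)$ to place every element of $S$ inside $R(S^{-1}D)$.
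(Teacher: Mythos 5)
Your proof is correct and follows essentially the same double-inclusion argument as the paper: the paper likewise notes that the generators $\frac{s}{d}$ of $R(S^{-1}D)$ lie in $R(D)[S]$, and that both $R(D)$ and $S$ (via $s = (s^{-1})^{-1}$ with $s^{-1} \in S^{-1}D$) sit inside $R(S^{-1}D)$. The only implicit convention to keep in mind is that $0 \notin S$, so that every $s \in S$ is indeed a nonzero element whose reciprocal makes sense.
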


\begin{proof}
The ring $ R(S^{-1}D) $ is generated by all the elements of the form $\frac{s}{d}$ for $s \in S$ and $d \in D$. Any of these elements is clearly in $R(D)[S]$. For the opposite containment, we recall that taking ring of reciprocal complements preserves inclusions and therefore $R(D) \subseteq R(S^{-1}D)$. Also $S \subseteq R(S^{-1}D)$ since $s^{-1} \in S^{-1}D$ for every $s \in S$. This concludes the proof.
\end{proof}

Recall now that all the units of $D$ are trivially Egyptian elements, and if $x,y$ are Egyptian elements of $D$, then $x+y$ and $xy$ are also Egyptian elements (see \cite[Proposition 8]{GLO}).
 Let us denote by $E= E(D)$ the set of Egyptian elements of $D$ and by $\mathcal{E} = \mathcal{E}(D)$ the ring $E \cup \lbrace 0 \rbrace$. Denote the quotient field of $\mathcal{E}$ by $\mathcal{Q}(\mathcal{E})$.
 The set $E$ is multiplicatively closed, thus we can consider the localization $E^{-1}D$. We recall the following properties:
 
 \begin{thm} \rm \cite[Proposition 2.2]{EGL} \it
 \label{reductiontoKalgebra}
 The following assertions hold:
 \begin{enumerate}
 \item[(1)] $R(E^{-1}D)= R(D)$.
 \item[(2)] The set of Egyptian elements of $E^{-1}D$ coincides with the field $\mathcal{Q}(\mathcal{E})$.
 \end{enumerate}
 \end{thm}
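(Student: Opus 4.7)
For part (1), the strategy is to apply Lemma \ref{localizations}, which gives $R(E^{-1}D) = R(D)[E]$. But every $e \in E$ is by definition a sum of reciprocals of elements of $D$, hence already lies in $R(D)$; so adjoining $E$ produces nothing new and $R(D)[E] = R(D)$.

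The substance of the theorem is in part (2), for which I would prove separately the two inclusions of $\mathcal{E}(E^{-1}D) = \mathcal{Q}(\mathcal{E})$ (reading the statement as identifying the Egyptian elements together with zero). The containment $\mathcal{Q}(\mathcal{E}) \subseteq \mathcal{E}(E^{-1}D)$ is essentially formal: given a nonzero $x = a/b$ with $a,b \in E$, pick a presentation $a = 1/d_1 + \ldots + 1/d_n$ with $d_i \in D \setminus \{0\}$ and write
\[
x \;=\; \dfrac{1}{b d_1} + \ldots + \dfrac{1}{b d_n}.
\]
Since $b$ is a unit in $E^{-1}D$ we have $x \in E^{-1}D$, and the displayed expression is a sum of reciprocals of nonzero elements of $D \subseteq E^{-1}D$, so $x$ is Egyptian in $E^{-1}D$.

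For the reverse inclusion, suppose $x \in E^{-1}D$ is nonzero and Egyptian there, so $x = \sum_{i=1}^n 1/e_i$ with $e_i = d_i/s_i$, where $d_i \in D \setminus \{0\}$ and $s_i \in E$. The key move is to iterate the ``Egyptian property'' on the denominators: write each $s_i = \sum_k 1/g_{ik}$ with $g_{ik} \in D$ and substitute into $1/e_i = s_i/d_i$, obtaining $1/e_i = \sum_k 1/(d_i g_{ik})$ and hence $x = \sum_{i,k} 1/(d_i g_{ik})$, a sum of reciprocals of elements of $D$; in particular $x \in R(D)$. Now fix any presentation $x = a/s$ with $a \in D$ and $s \in E$ (possible because $x \in E^{-1}D$), and apply the same trick to $a = s\cdot x$: expanding $s = \sum_l 1/h_l$ with $h_l \in D$ and distributing produces $a = \sum_{i,k,l} 1/(d_i g_{ik} h_l)$, exhibiting $a$ as a sum of reciprocals of elements of $D$. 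Since $a \in D \setminus \{0\}$, this shows $a$ is Egyptian in $D$, hence $a \in \mathcal{E}$, and therefore $x = a/s \in \mathcal{Q}(\mathcal{E})$.

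The main (minor) obstacle is precisely the final step above: one cannot conclude directly from $x \in R(D) \cap E^{-1}D$ that $x \in \mathcal{Q}(\mathcal{E})$. To close the gap one must exploit the Egyptian nature of the denominator $s$ to promote the numerator $a$ of an arbitrary $E^{-1}D$-presentation of $x$ to an Egyptian element of $D$. Beyond this bookkeeping, the whole argument is a double application of the elementary principle that multiplying an Egyptian element by a single reciprocal yields again a sum of reciprocals of elements of $D$.
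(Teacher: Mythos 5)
Your argument is correct. Note, however, that the paper does not prove this statement at all: it is quoted verbatim from \cite[Proposition 2.2]{EGL}, so there is no internal proof to compare against, and your write-up has to be judged on its own. Part (1) — invoking Lemma \ref{localizations} to get $R(E^{-1}D)=R(D)[E]$ and then observing that every element of $E$ already lies in $R(D)$ by the very definition of an Egyptian element — is the natural route and is not circular, since the proof of Lemma \ref{localizations} does not depend on the present theorem. For part (2) both inclusions are sound: the forward one is the formal manipulation $a/b=\sum_i \frac{1}{b d_i}$, and in the reverse one you correctly identify the only genuinely non-formal step, namely that $x\in R(D)\cap E^{-1}D$ alone does not place $x$ in $\mathcal{Q}(\mathcal{E})$; expanding the denominator $s$ of a presentation $x=a/s$ as a sum of reciprocals so as to exhibit $a=sx$ itself as a sum of reciprocals of elements of $D$ (hence $a\in E$) closes that gap. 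The only implicit ingredients are that $E$ is multiplicatively closed and that repeated denominators are permitted (\cite[Theorem 2]{GLO}), both of which the paper records before the statement.
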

 


 An interesting corollary of the previous results comes as follows:

\begin{cor}
\label{corlocalizations}
Let $D$ be an integral domain and let $E$ denote the subset of Egyptian elements of $D$. The following conditions are equivalent:
 \begin{enumerate}
 \item[(1)] $D$ is Egyptian.
 \item[(2)]  $E \cap \p \neq \emptyset$ for all nonzero prime ideals $\p$ of $D$.
 \item[(3)]  $E \cap \p \neq \emptyset$ for all but finitely many prime ideals $\p$ of $D$.
 \end{enumerate}
\end{cor}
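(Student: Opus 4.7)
The plan is to establish the chain $(1) \Rightarrow (2) \Rightarrow (3) \Rightarrow (1)$, where the first two implications are essentially definitional and the real content is in $(3) \Rightarrow (1)$.

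For $(1) \Rightarrow (2)$, I would simply note that if $D$ is Egyptian then by definition every nonzero element of $D$ is Egyptian, so $E = D \setminus \{0\}$, and any nonzero prime ideal $\p$ contains some nonzero element, which automatically lies in $E$. The implication $(2) \Rightarrow (3)$ is trivial.

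The heart of the argument is $(3) \Rightarrow (1)$, and my strategy is to reduce to the localized ring $E^{-1}D$ by invoking Theorem \ref{reductiontoKalgebra}(1), which gives $R(E^{-1}D) = R(D)$. Since $E^{-1}D$ and $D$ share the same quotient field, $D$ is Egyptian if and only if $E^{-1}D$ is Egyptian. Next I would use the standard fact that the prime spectrum of $E^{-1}D$ is in order-preserving bijection with the set of prime ideals of $D$ disjoint from $E$. Hypothesis $(3)$ asserts that only finitely many primes of $D$ are disjoint from $E$, so $E^{-1}D$ has only finitely many prime ideals and is therefore semilocal. Since any semilocal domain is Egyptian (as recalled in the introduction of the paper), we conclude that $E^{-1}D$ is Egyptian, hence $R(D) = R(E^{-1}D) = \mathcal{Q}(E^{-1}D) = \mathcal{Q}(D)$, which gives $(1)$.

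There is no real obstacle here beyond identifying the correct reduction: one must resist the temptation to work directly inside $D$, where the hypothesis $(3)$ is hard to exploit, and instead pass to $E^{-1}D$ where the hypothesis translates immediately into the finiteness of $\Spec(E^{-1}D)$. Once this reduction is made, Theorem \ref{reductiontoKalgebra} and the known Egyptian property of semilocal domains do all of the remaining work.
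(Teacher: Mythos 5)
Your proposal is correct and follows essentially the same route as the paper: the implications $(1)\Rightarrow(2)\Rightarrow(3)$ are handled definitionally, and $(3)\Rightarrow(1)$ is proved by passing to $E^{-1}D$, observing it is semilocal hence Egyptian, and invoking $R(E^{-1}D)=R(D)$ from Theorem \ref{reductiontoKalgebra}. No gaps.
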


\begin{proof}
If $D$ is Egyptian, then $D = E \cup \lbrace 0 \rbrace$ and condition (2) follows immediately. Also the implication (2) $\Rightarrow$ (3) is obvious. Assuming (3), we can pass to the localization $E^{-1}D$ and use that $R(E^{-1}D)=R(D)$ by Theorem \ref{reductiontoKalgebra}. Since all but finitely many primes of $D$ meet $E$, we obtain that $E^{-1}D$ is semilocal, hence Egyptian by \cite[Corollary 1]{GLO}. It follows that $R(D)= R(E^{-1}D) = \mathcal{Q}(D)$ and thus $D$ is Egyptian.
\end{proof}

Thanks to Theorem \ref{reductiontoKalgebra}, in the rest of the paper we will often assume that $D$ is an integral domain containing a field $K$ and the only Egyptian elements of $D$ are the nonzero elements of $K$. Equivalently, $D$ is an integral domain such that every Egyptian element is a unit.


We can specialize the result of Lemma \ref{localizations} to the case of multiplicatively closed sets generated by one element. Before doing this we recall a result about algebraic elements.

\begin{lemma} \rm \cite[Proposition 5]{GLO} \it
\label{algegyp}
Let $D$ be a $K$-algebra and let $x \in D$ be a nonzero element. If $x$ is algebraic over $K$ then it is an Egyptian element of $D$.
\end{lemma}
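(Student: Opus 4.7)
The plan is to show that any algebraic element $x$ over $K$ is actually a unit of $D$, at which point Egyptianity becomes trivial: if $x^{-1}\in D$, then $x=\frac{1}{x^{-1}}$ is a sum of one unit fraction, hence $D$-Egyptian.

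To show $x$ is a unit, I would start from the fact that $x$ satisfies a nonzero polynomial $p(T)=a_nT^n+\ldots+a_1T+a_0 \in K[T]$. Since $D$ is an integral domain and $x\neq 0$, I can cancel any power of $T$ dividing $p(T)$: writing $p(T)=T^k q(T)$ with $q(0)\neq 0$, the relation $x^k q(x)=0$ together with $x\neq 0$ forces $q(x)=0$. Replacing $p$ by $q$, I may assume $a_0\neq 0$ and that $p(x)=0$ holds in $D$.

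Rewriting the relation $a_0+a_1 x+\ldots+a_n x^n=0$ as
\[
a_0 = -x\bigl(a_1+a_2 x+\ldots+a_n x^{n-1}\bigr),
\]
and using that $a_0$ is a unit in $K\subseteq D$, I get
\[
\frac{1}{x}=-\frac{1}{a_0}\bigl(a_1+a_2 x+\ldots+a_n x^{n-1}\bigr)\in D.
\]
Therefore $x^{-1}\in D$, i.e., $x$ is a unit of $D$, and consequently $x=\tfrac{1}{x^{-1}}$ is a $D$-Egyptian element.

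The argument is essentially a standard trick (the same one used to show that algebraic elements over a field in a domain are invertible). There is no real obstacle here; the only mild subtlety is the normalization step that guarantees a relation with nonzero constant term, which relies precisely on $D$ being a domain and on $x$ being nonzero.
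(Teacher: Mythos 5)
Your proof is correct and follows essentially the same route as the paper: both arguments normalize the algebraic relation to one with nonzero constant term (you by cancelling a power of $T$, the paper by taking a minimal-degree relation), deduce that $x$ is a unit of $D$, and conclude since units are Egyptian. No issues.
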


\begin{proof}
Since $x$ is algebraic over $K$, there exist $a_0, \ldots, a_c \in K$ such that $\sum_{i=0}^c a_i x^i = 0$ and $c$ is the minimal integer such that such algebraic equation exists. Hence, $x( \sum_{i=1}^c a_i x^{i-1}) \in K$ and $ \sum_{i=1}^c a_i x^{i-1} \neq 0$ by assumption on $c$. The thesis follows since $x$ is a unit and units are Egyptian elements. 
\end{proof}

\begin{cor}
\label{cor2loc} Let $D$ be an integral domain containing a field $K$. Consider the multiplicatively closed set $S = K[x] \setminus \lbrace  0 \rbrace$ for some $x \in D \setminus K$. Then $$ R(S^{-1}D)=R(D)[S]=R(D)[x]= R(D[x^{-1}]).$$
\end{cor}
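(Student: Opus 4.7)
The plan is to establish the three claimed equalities one at a time, invoking Lemma \ref{localizations} twice together with one elementary preliminary observation. The observation is that $K \subseteq R(D)$: for any nonzero $a \in K$, since $K$ is a subfield of $D$, the inverse $a^{-1}$ belongs to $D \setminus \lbrace 0 \rbrace$, and therefore $a = 1/a^{-1}$ lies in $R(D)$ directly by the definition of the reciprocal complement (alternatively, one could appeal to Lemma \ref{algegyp} since $a$ is algebraic over $K$).

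The first equality $R(S^{-1}D) = R(D)[S]$ is an immediate instance of Lemma \ref{localizations} applied to the given multiplicatively closed set $S = K[x] \setminus \lbrace 0 \rbrace$. For the second equality $R(D)[S] = R(D)[x]$, I would observe that since $K \subseteq R(D)$, every polynomial in $K[x]$ lies in $R(D)[x]$; in particular $S \subseteq R(D)[x]$, giving the inclusion $R(D)[S] \subseteq R(D)[x]$. The reverse inclusion is trivial because $x \in S \subseteq R(D)[S]$.

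For the third equality $R(D)[x] = R(D[x^{-1}])$, I would apply Lemma \ref{localizations} once more, this time to the multiplicatively closed subset $T = \lbrace x^n : n \geq 0 \rbrace$ of $D$ (which is well-defined because $x \neq 0$, as $x \notin K$ and $0 \in K$). Since $T^{-1}D = D[x^{-1}]$ and $R(D)[T] = R(D)[x]$ (as $T$ is generated as a monoid by $x$), the lemma yields the desired equality at once. There is no real obstacle here: the statement is essentially a bookkeeping corollary of Lemma \ref{localizations}, with the only non-bureaucratic input being the embedding $K \subseteq R(D)$ that allows one to replace the adjunction of the full set $S$ by that of the single element $x$.
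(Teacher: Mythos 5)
Your proof is correct, and the overall skeleton matches the paper's: the first and third equalities are both read off from Lemma \ref{localizations}, the third via the multiplicative set $\lbrace x^n : n \geq 0\rbrace$, exactly as the paper does. The genuine divergence is in the middle equality $R(D)[S]=R(D)[x]$, which is the only step with content. Both you and the paper reduce it to showing $S \subseteq R(D)[x]$, but the paper gets there by splitting into the cases $x$ algebraic or transcendental over $K$, using Lemma \ref{algegyp} in the first case and the identity $R(K[x][x^{-1}])=K(x)$ from \cite[Proposition 7]{GLO} in the second, thereby establishing the stronger containment $K(x)\subseteq R(D)[x]$. You instead observe that $K \subseteq R(D)$ (each nonzero $a \in K$ equals $1/a^{-1}$ with $a^{-1}\in D\setminus\lbrace 0\rbrace$), whence $K[x]\subseteq R(D)[x]$ simply because $R(D)[x]$ is a ring containing both $K$ and $x$; this already gives $S\subseteq R(D)[x]$. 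Your route is strictly more elementary, needs no case distinction, and avoids the external citation. What the paper's longer argument buys is the additional fact that the full rational function field $K(x)$ lies in $R(D)[x]=R(S^{-1}D)$ when $x$ is transcendental --- consistent with Theorem \ref{reductiontoKalgebra}, since $S^{-1}D$ contains the field $K(x)$ of Egyptian elements --- but that extra information is not needed for the equalities asserted in the corollary. Both arguments are valid as written.
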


\begin{proof}
The first and last equality follow by Lemma \ref{localizations}. For the equality in the middle, we observe first that $R(D)[S] \supseteq R(D)[x]$ since $x \in S$. For the other containment, if $x$ is algebraic over $K$, then it is Egyptian by Lemma \ref{algegyp} and all the elements of $K[x]$ are also Egyptian. In this case $R(D)[S]=R(D)=R(D)[x]$.

If $x$ is not algebraic over $K$, we identify $K[x]$ with a polynomial ring in one variable over $K$ and recall that by \cite[Proposition 7]{GLO}, $R(K[x][x^{-1}])= K(x)$. It follows that $K(x) \subseteq R(D[x^{-1}]) = R(D)[x]$ and therefore $S \subseteq R(D)[x]$.
\end{proof}

The fact that $R(D)$ is always local has some nice consequences related to the behavior of its prime ideals. Item 3 of the following proposition has been already observed in \cite[Propositions 2.10-2.11]{EGL}. We prove it again for sake of completeness.

  \begin{prop}
\label{primeideals}
Let $D$ be an integral domain. The following properties hold for the ring $R(D)$:
\begin{enumerate}
\item[(1)] For every nonzero $x \in D$ there exists a unique prime ideal of $R(D)$ maximal with respect to the property of excluding the fraction $ \frac{1}{x} $.
\item[(2)] For $x,y \in D$ (such that $x+y \neq 0$), if $\frac{1}{x+y} \in \p $ for some prime ideal $\p$ of $R(D)$, then at least one of $\frac{1}{x}$ and $\frac{1}{y}$ is in $\p$.
\item[(3)] Given $x_1, \ldots, x_n \in D$, suppose that $\sum_{i=1}^n \frac{1}{x_i}$ cannot be expressed as a sum of less than $n$ reciprocals of elements of $D$.
Then, if $ \sum_{i=1}^n \frac{1}{x_i} \in \p $ for some prime ideal $\p$ of $R(D)$, it follows that $\frac{ 1}{x_i} \in \p$ for every $i$. In particular every prime ideal of $R(D)$ can be generated by reciprocals of elements of $D$.
\end{enumerate}
\end{prop}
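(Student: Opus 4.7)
The plan is to dispatch the three parts in order, with part (1) reduced to the locality of a suitable localization of $R(D)$ that is again a reciprocal complement, and parts (2), (3) proved via explicit algebraic identities in $\mathcal{Q}(D)$ combined with primality.

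For (1), the strategy is to localize $R(D)$ at the multiplicative set $T = \{(1/x)^n : n \geq 0\}$. Inverting $1/x$ means adjoining $x$, so $T^{-1}R(D) = R(D)[x]$. Taking $S = \{x^n : n \geq 0\} \subseteq D$, Lemma \ref{localizations} gives $R(S^{-1}D) = R(D)[S] = R(D)[x]$, so the localization is itself a reciprocal complement. Theorem \ref{islocal!!} then says $R(D)[x]$ is local, and the standard bijection between primes of $R(D)$ disjoint from $T$ and primes of $T^{-1}R(D)$ identifies the unique maximal ideal of $R(D)[x]$ with the unique prime of $R(D)$ maximal among those excluding $1/x$.

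For (2), the key identity in $\mathcal{Q}(D)$ is
$$\frac{1}{xy} \;=\; \frac{1}{x+y}\left(\frac{1}{x}+\frac{1}{y}\right),$$
obtained by clearing denominators in $\tfrac{1}{x}+\tfrac{1}{y}=\tfrac{x+y}{xy}$. If $\frac{1}{x+y}\in\p$, the right-hand side lies in $\p$, hence $\frac{1}{x}\cdot\frac{1}{y}\in\p$, and primality yields $\frac{1}{x}\in\p$ or $\frac{1}{y}\in\p$.

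For (3), the plan is induction on $n$, driven by the elementary symmetric identity
$$s_n \;:=\; \sum_{i=1}^n \frac{1}{x_i} \;=\; \frac{e_{n-1}(x_1,\ldots,x_n)}{\prod_i x_i}, \qquad e_{n-1} \;=\; \sum_{i=1}^n \prod_{j\neq i} x_j \in D,$$
which rearranges to $\prod_{i=1}^n \frac{1}{x_i} = s_n \cdot \frac{1}{e_{n-1}}$. The minimality assumption gives $s_n\neq 0$, hence $e_{n-1}\neq 0$ in the domain $D$, so $\frac{1}{e_{n-1}}\in R(D)$. If $s_n\in\p$, then $\prod_i \frac{1}{x_i}\in\p$, so primality yields some $\frac{1}{x_i}\in\p$. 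Subtracting, $\sum_{j\neq i}\frac{1}{x_j}\in\p$ is a sum of $n-1$ reciprocals, and any shorter representation would, after adding back $\frac{1}{x_i}$, give $s_n$ as a sum of fewer than $n$ reciprocals, contradicting minimality. The inductive hypothesis then places each remaining $\frac{1}{x_j}$ in $\p$. The final assertion that every prime of $R(D)$ is generated by reciprocals is immediate: any $a\in\p$ admits some shortest expression as a sum of reciprocals, and the argument above places each of those reciprocals in $\p$. The only point that might look like an obstacle, namely producing the identity in (3), is really the heart of the argument once one observes that $e_{n-1}\in D$ so that $\frac{1}{e_{n-1}}$ is a generator of $R(D)$; the rest is a direct induction.
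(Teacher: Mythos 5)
Your proposal is correct and follows essentially the same route as the paper: part (1) rests on the observation that $R(D)[x]=R(S^{-1}D)$ is again a reciprocal complement, hence local by Theorem \ref{islocal!!}, so it is the localization of $R(D)$ at the unique prime maximal among those excluding $\frac{1}{x}$; parts (2) and (3) use the identical identities $\frac{1}{xy}=\frac{1}{x+y}\left(\frac{1}{x}+\frac{1}{y}\right)$ and $\frac{1}{x_1\cdots x_n}=\left(\sum_i \frac{1}{x_i}\right)\cdot\frac{1}{e_{n-1}}$ together with primality. Your explicit check that the length-$(n-1)$ sum remaining after subtraction is still of minimal length is a detail the paper leaves implicit, but the argument is the same.
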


\begin{proof}
Let us prove (1). If $x $ is Egyptian, then it is a unit in $R(D)$ and all the prime ideals of $R(D)$ exclude $\frac{1}{x}$. In this case the thesis follows since $R(D)$ is local 
by Theorem \ref{islocal!!}.

 Suppose that $x $ is not Egyptian. Thus $x \not \in R(D)$.
Observe that by Corollary \ref{cor2loc}, $R(D)[x]= R(D[x^{-1}])$ is a proper overring of $R(D)$. But $R(D[x^{-1}])$ is a reciprocal complement, hence local by Theorem \ref{islocal!!}. A local localization of an integral domain has to be the localization at a prime ideal. Thus $R(D)[x] = R(D)_{\p}$ for some prime ideal $\p$.
By standard properties of localizations, $\p$ is the unique prime ideal maximal with respect to the property of excluding $ \frac{1}{x} $.  

To prove (2), write $$  \frac{1}{x+y} \left( \frac{1}{x} + \frac{1}{y} \right) = \frac{1}{x+y} \frac{x+y}{xy} = \frac{1}{xy}. $$  This shows that $ \frac{1}{xy} \in \p$ if $\frac{1}{x+y} \in \p $. The thesis is now an immediate consequence of the definition of prime ideal.  \\
To prove (3), set $x= x_1 \cdots x_n$
and observe that $ (\sum_{i=1}^n \frac{x}{x_i} )^{-1}  $ is a well defined element of $R(D)$ since we assumed $\sum_{i=1}^n \frac{1}{x_i} \neq 0$.
Hence, if $ \sum_{i=1}^n \frac{1}{x_i} \in \p $, we have
$$ \dfrac{1}{x} = \left( \sum_{i=1}^n \frac{1}{x_i} \right) \left( \sum_{i=1}^n \frac{x}{x_i} \right)^{-1} \in \p. $$
It follows that $ \frac{1}{x_j} \in \p $ for some $j$. Subtracting this term from $\sum_{i=1}^n \frac{1}{x_i}$ and iterating the procedure we get the thesis.
\end{proof}

As a consequence we can prove that all the reciprocal complements that have Krull dimension 2 satisfy a property which has been proved specifically for $R(K[X,Y])$ in \cite[Theorem 7.3]{EGL}. The same property does not hold anymore in larger dimension.

\begin{cor}
\label{dim2}
Suppose that $R(D)$ has Krull dimension 2. Then every finitely generated proper ideal of $R(D)$ is contained in all but finitely many prime ideals.
\end{cor}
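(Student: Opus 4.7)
The strategy is to rephrase the claim in terms of the prime spectrum: for $I=(a_1,\ldots,a_n)$ a finitely generated proper ideal, we need
\[
\{\p\in\Spec R(D):I\not\subseteq\p\}\;=\;\bigcup_{i=1}^n\{\p\in\Spec R(D):a_i\notin\p\}
\]
to be finite. The case $I=0$ is trivial, so we may assume $I\neq 0$ and reduce to showing that $\{\p:a\notin\p\}$ is finite for each nonzero $a\in\m$. Invoking Theorem \ref{reductiontoKalgebra}, we may further assume that $D$ contains a field $K$ whose nonzero elements are exactly the Egyptian elements of $D$. In particular $K\subseteq R(D)$, every element of $K\setminus\{0\}$ is a unit of $R(D)$, and consequently $K\cap\m=\{0\}$.

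The first step is a representation lemma: every $a\in\m$ can be written as $a=\sum_l\frac{1}{d_l}$ with every $d_l\in D\setminus K$, i.e.\ with all denominators non-Egyptian. Starting from any expression $a=\sum_l\frac{1}{d_l}$ in $R(D)$, I split the sum as $a=b+c$ where $b$ collects the terms with $d_l\in K$ and $c$ collects the terms with $d_l\notin K$. Each summand of $c$ lies in $\m$ by Theorem \ref{islocal!!}, so $c\in\m$ and hence $b=a-c\in\m$. On the other hand $b\in K$ because $K$ is a field, so $b\in K\cap\m=\{0\}$, forcing $a=c$.

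To conclude, I apply Proposition \ref{primeideals}(1): any prime $\p$ excluding $\frac{1}{d_l}$ is contained in the unique maximal exclusion prime $\p_{d_l}$ (a standard Zorn argument combined with uniqueness). If $\p$ does not contain $a$, then some term $\frac{1}{d_l}$ lies outside $\p$, whence $\p\subseteq\p_{d_l}$. Since $d_l$ is non-Egyptian, $\frac{1}{d_l}\in\m$, so $\p_{d_l}\neq\m$; combined with $\dim R(D)=2$ this forces $\hgt\p_{d_l}\leq 1$, and the set of primes contained in $\p_{d_l}$ is at most $\{(0),\p_{d_l}\}$. Therefore $\{\p:a\notin\p\}\subseteq\{(0)\}\cup\{\p_{d_l}:l\}$ is finite, which finishes the proof. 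The only delicate point is the representation lemma, which rests crucially on the reduction to the case where the Egyptian elements form a field, together with the dimension hypothesis that keeps each $\p_{d_l}$ of height at most one.
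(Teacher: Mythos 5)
Your proof is correct and follows essentially the same route as the paper: reduce via Theorem \ref{reductiontoKalgebra} to a $K$-algebra whose Egyptian elements are exactly $K\setminus\{0\}$, write each non-unit as a finite sum of reciprocals of non-Egyptian elements, and use Proposition \ref{primeideals}(1) together with the dimension hypothesis to see that each such reciprocal lies outside at most the two primes $(0)$ and $\p_{d_l}$. The only difference is that you spell out the representation lemma (splitting off the $K$-denominators) and the containment $\p\subseteq\p_{d_l}$, which the paper leaves implicit.
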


\begin{proof}
By Theorem \ref{reductiontoKalgebra}, we assume that $D$ is a $K$-algebra whose only Egyptian elements are the nonzero elements of $K$. Notice that it is sufficient to prove that every element of the form $ \frac{1}{x} $ for $x \in D \setminus K$ is in all but finitely many primes. Indeed any non-unit in $R(D)$ is a finite sum of such elements. 

By item 1 of Proposition \ref{primeideals}, there exists a unique prime ideal maximal with respect to the property of excluding $\frac{1}{x}$. Since $x \not \in K$, this prime cannot be the maximal ideal of $R(D)$. Hence it must be a prime ideal of height 1 or 0. By uniqueness it follows that $\frac{1}{x} $ is excluded by at most two primes (including the zero prime).
\end{proof}

We know that in some case $R(D)$ can be Noetherian, for instance if $D$ is Egyptian or if $D=K[X]$. We can prove now that in the case $R(D)$ is Noetherian, its Krull dimension can not exceed 1.

\begin{cor}
\label{noeth1}
Suppose that $R(D)$ is Noetherian. Then $\dim(R(D)) \leq 1$.
\end{cor}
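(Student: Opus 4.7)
The plan is to argue by contradiction: suppose $R(D)$ is Noetherian but $\dim R(D)\ge 2$, and derive an impossibility. By Theorem~\ref{islocal!!} the ring $R(D)$ is local, and any Noetherian local ring has finite Krull dimension, so $R(D)$ has finite Krull dimension. This puts us in the hypothesis range of Theorem~\ref{pseudoradical}.

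The key input will be the ``nonzero pseudoradical'' statement of Theorem~\ref{pseudoradical}: when $R(D)$ has finite Krull dimension, the intersection of all nonzero primes of $R(D)$ contains a nonzero element $a$. In particular every height-one prime of $R(D)$ contains $a$. But in a Noetherian domain, the height-one primes containing a fixed nonzero element $a$ are exactly the minimal primes of the principal ideal $(a)$ (Krull's Hauptidealsatz), and there are only finitely many such minimal primes; hence $R(D)$ would have only finitely many height-one primes.

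This will contradict the standard fact that a Noetherian local domain $(R,\m)$ of Krull dimension at least two has infinitely many height-one primes. A brief prime-avoidance argument suffices: if $\p_1,\dots,\p_r$ were all of them, then $\m\not\subseteq \bigcup_i \p_i$ because $\hgt(\m)\ge 2>\hgt(\p_i)$, so one finds $x\in\m$ with $x\notin \p_i$ for every $i$; any minimal prime of $(x)$ has height one by the Hauptidealsatz, forcing it to be some $\p_i$, contradicting $x\notin \p_i$.

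The only conceptual step is recognising that Theorem~\ref{pseudoradical} supplies exactly what is needed to clash with Noetherianity in dimension $\ge 2$; once it is in hand, the remainder is classical dimension theory and should present no real obstacle. An essentially equivalent alternative plan would first reduce to the case $\dim R(D)=2$ by taking a quotient (using Theorem~\ref{quotients} to guarantee that the quotient is again a reciprocal complement, hence still local), and then apply Corollary~\ref{dim2} to the finitely generated maximal ideal of that quotient to conclude that it has only finitely many primes, again contradicting the infinitely-many-height-one-primes fact in dimension two.
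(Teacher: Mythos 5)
Your argument is correct, but it is not the route the paper's displayed proof takes. The paper argues directly from Proposition \ref{primeideals}: for a non-unit $\frac{1}{x}$ there is a unique prime $\p_x$ maximal with respect to excluding $\frac{1}{x}$; since in a Noetherian domain $\frac{1}{x}$ lies in only finitely many height-one primes, $\p_x$ must contain all but finitely many of the (infinitely many) height-one primes, so $\hgt \p_x \geq 2$ while $\p_x$ is not maximal --- an immediate contradiction in dimension $2$, and an induction via $R(D)_{\p_x}=R(D[x^{-1}])$ in higher dimension. You instead pass through Theorem \ref{pseudoradical} to get a nonzero pseudoradical and then run the classical argument that a Noetherian G-domain has dimension at most one (only finitely many height-one primes can contain a fixed nonzero element, versus the prime-avoidance fact that a Noetherian local domain of dimension $\geq 2$ has infinitely many height-one primes). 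This is precisely the alternative proof the paper itself points out in the remark following Theorem \ref{pseudoradical}, citing \cite[Theorem 146]{Kap}; there is no circularity, since Theorem \ref{pseudoradical} does not depend on the corollary, but note that your proof relies on a result stated later in the paper, whereas the paper's own proof is self-contained at the point where the corollary appears. Your fallback plan (quotient down to dimension $2$ via Theorem \ref{quotients} and apply Corollary \ref{dim2} to the finitely generated maximal ideal) also works and is a third, genuinely distinct argument. All three are sound; yours is the shortest modulo the quoted theorem and makes the link with G-domains explicit, while the paper's gains independence from the later material.
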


\begin{proof}
Since $R(D)$ is Noetherian and local, it must have finite Krull dimension.
Suppose by way of contradiction $\dim (R(D)) \geq 2$. Hence $D$ is not Egyptian and we can find an  element $x \in D$ such that $ \frac{1}{x} $ is not a unit in $ R(D)$. By Proposition \ref{primeideals}, there exists a unique prime ideal $\p$ maximal with respect to the property of excluding $\frac{1}{x}$. By the theory of primary decomposition in Noetherian rings and Krull's Altitude Theorem, $\frac{1}{x}$ is contained in only finitely many height one primes. Since $\dim (R(D)) \geq 2$, there are infinitely many height one primes. By uniqueness of the property of $\p$, we must have $\hgt \p > 1$. Moreover, $\p$ cannot be the maximal ideal since $\frac{1}{x}$ is not a unit. If $\dim (R(D)) = 2$ we get an immediate contradiction. 

If $\dim (R(D)) > 2$ we can pass to the ring $R(D[x^{-1}])= R(D)[x]= R(D)_{\p}$ which is still Noetherian and $ 1 < \dim (R(D)_{\p}) < \dim (R(D)) $. By induction on the dimension of $ R(D)$ we find a contradiction.
\end{proof}

The \it pseudoradical \rm of an integral domain is the intersection of all the nonzero prime ideals \cite{gilmerpseudo}, \cite{Kap}. An integral domain with nonzero pseudoradical is called a \it G-domain. \rm
A standard fact is that, if $x $ is a nonzero element of the pseudoradical of $D$, then $D[x^{-1}]= \mathcal{Q}(D)$.

In \cite[Proposition 2.12]{EGL} it is proved that the ring of reciprocal complements of the polynomial ring $K[X_1, \ldots, X_n]$ (and more in general of every finitely generated $K$-algebra) has nonzero pseudoradical. 

We prove that this fact is true more in general in the case when $R(D)$ has finite Krull dimension. Together with this we prove a stronger result: if $R(D)$ has finite Krull dimension, then all its localizations at prime ideals are reciprocal complements of some localization of $D$.

\begin{definition}
\label{defpx}
By item 1 of Proposition \ref{primeideals}, given a nonzero $x \in D$, there exists a unique prime ideal of $R(D)$ maximal with respect to the property of excluding $\frac{1}{x}$. We denote this prime ideal by $\p_x$.
\end{definition}
 
 We have the following:

\begin{lemma}
\label{pxpy}
Given nonzero $x,y \in D$, 
then the ideal $\p_{xy}$ is the unique largest prime ideal of $R(D)$ contained in $\p_x \cap \p_y$.
\end{lemma}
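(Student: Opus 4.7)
The plan is to first strengthen the defining property of $\p_x$: not only is $\p_x$ the maximal prime excluding $\frac{1}{x}$, but in fact every prime of $R(D)$ that excludes $\frac{1}{x}$ is contained in $\p_x$. This is built into the proof of Proposition \ref{primeideals}(1): there it is shown that $R(D)[x] = R(D)_{\p_x}$, and since this localization is local, the primes of $R(D)$ missing the multiplicative set of powers of $\frac{1}{x}$ correspond bijectively (via extension-contraction) to primes of $R(D)_{\p_x}$, all of which lie inside the unique maximal ideal. So any prime $\q$ of $R(D)$ with $\frac{1}{x} \notin \q$ satisfies $\q \subseteq \p_x$.

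Granted this observation, the first step is to show $\p_{xy} \subseteq \p_x \cap \p_y$. By definition $\frac{1}{xy} \notin \p_{xy}$; writing $\frac{1}{xy} = \frac{1}{x} \cdot \frac{1}{y}$, if either factor lay in $\p_{xy}$ the product would too, contradiction. Hence $\frac{1}{x}, \frac{1}{y} \notin \p_{xy}$, and the observation gives $\p_{xy} \subseteq \p_x$ and $\p_{xy} \subseteq \p_y$.

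The second step is maximality. Let $\q$ be any prime of $R(D)$ with $\q \subseteq \p_x \cap \p_y$. Since $\frac{1}{x} \notin \p_x \supseteq \q$, we have $\frac{1}{x} \notin \q$, and likewise $\frac{1}{y} \notin \q$. By primality of $\q$, the product $\frac{1}{xy} = \frac{1}{x} \cdot \frac{1}{y}$ is also not in $\q$. Applying the observation to $xy$ in place of $x$ yields $\q \subseteq \p_{xy}$. Combined with the first step, this shows $\p_{xy}$ is the unique largest prime ideal of $R(D)$ contained in $\p_x \cap \p_y$.

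There is essentially no obstacle once the strengthened characterization of $\p_x$ is isolated; all subsequent work amounts to applying the identity $\frac{1}{xy} = \frac{1}{x} \cdot \frac{1}{y}$ together with primality. The only subtlety worth stating carefully is that the observation is \emph{not} a formal consequence of $\p_x$ being maximal among primes missing $\frac{1}{x}$ (in an arbitrary ring such a maximal prime need not contain all the others), but rather depends on the specific fact that $R(D)[x]$ is local, which is exactly what was proved in Proposition \ref{primeideals}(1).
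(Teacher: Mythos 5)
Your proof is correct and follows essentially the same route as the paper: both directions come down to the identity $\frac{1}{xy}=\frac{1}{x}\cdot\frac{1}{y}$ together with the (unique) maximality property defining $\p_x$, $\p_y$, $\p_{xy}$; you merely spell out explicitly the step the paper leaves implicit, namely that \emph{every} prime excluding $\frac{1}{x}$ lies in $\p_x$ because $R(D)[x]=R(D)_{\p_x}$ is local. (A small aside: that containment also follows formally from the \emph{uniqueness} of the maximal excluding prime via a Zorn's lemma argument, so your cautionary remark applies only when uniqueness is not assumed.)
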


\begin{proof}
By definition of $\p_{xy}$ in terms of its maximality property we have that $\p_{xy} \subseteq \p_x \cap \p_y $. But also, if $\q$ is a prime ideal of $R(D)$ such that $ \q \subseteq \p_x \cap \p_y$ we must have $\frac{1}{xy} \not \in \q$, hence $\q \subseteq \p_{xy}$.
\end{proof}

We recall that two prime ideals $\p, \q$ are \it adjacent \rm if $\p \subseteq \q$ and there are no other prime ideals properly contained between them.

\begin{thm}
 \label{pseudoradical}
 Let $D$ be an integral domain such that $R(D)$ has finite Krull dimension and let $\p$ be a prime ideal of $R(D)$. Then there exists $x \in D$ such that $\p=\p_x$. In particular, $R(D)$ has nonzero pseudoradical.  
 \end{thm}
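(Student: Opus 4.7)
The plan is to show that every prime $\p$ of $R(D)$ has the form $\p_{x_0}$ by choosing $x_0$ so that $\p_{x_0}$ is a minimal element of the family
$$ \mathcal{F}_\p := \{\p_x : x \in D \setminus \{0\},\ \p \subseteq \p_x\}, $$
and then to deduce the pseudoradical assertion as the special case $\p = (0)$.

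I would first record the dictionary $\p \subseteq \p_x \iff \frac{1}{x} \notin \p$: the forward direction is immediate from $\frac{1}{x} \notin \p_x$, and the reverse is the maximality property of $\p_x$ in Proposition \ref{primeideals}(1). The family $\mathcal{F}_\p$ is nonempty, since the maximal ideal $\m$ of $R(D)$ coincides with $\p_1$ (the element $1$ belongs to no proper ideal) and $\p \subseteq \m$. The finite Krull dimension hypothesis now enters to bound the length of every chain of primes of $R(D)$; in particular, every descending chain in $\mathcal{F}_\p$ terminates and $\mathcal{F}_\p$ admits a minimal element $\p_{x_0}$.

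The heart of the argument is then to prove $\p = \p_{x_0}$ by contradiction. Assume $\p \subsetneq \p_{x_0}$. By Proposition \ref{primeideals}(3), $\p_{x_0}$ is generated by reciprocals of elements of $D$, so one can choose $y \in D \setminus \{0\}$ with $\frac{1}{y} \in \p_{x_0} \setminus \p$. From $\frac{1}{y} \notin \p$ we get $\p \subseteq \p_y$, hence $\p \subseteq \p_{x_0} \cap \p_y$. Lemma \ref{pxpy} identifies $\p_{x_0 y}$ as the largest prime contained in $\p_{x_0} \cap \p_y$, whence $\p \subseteq \p_{x_0 y}$ and $\p_{x_0 y} \in \mathcal{F}_\p$. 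On the other hand, $\p_{x_0} \not\subseteq \p_y$ (as $\frac{1}{y}$ witnesses), so $\p_{x_0 y} \subsetneq \p_{x_0}$, contradicting the minimality of $\p_{x_0}$.

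The ``in particular'' claim is then immediate: applying the statement just proved to $\p = (0)$ yields $x \in D \setminus \{0\}$ with $(0) = \p_x$, meaning that $\frac{1}{x}$ lies in every nonzero prime of $R(D)$ and is therefore a nonzero element of the pseudoradical. I expect the main obstacle to be the descent step of the contradiction, which critically relies on combining two earlier tools — the existence of reciprocal generators (Proposition \ref{primeideals}(3)) and the product identity for $\p_{xy}$ (Lemma \ref{pxpy}) — whereas finite Krull dimension is used only once, to guarantee the existence of a minimal element in $\mathcal{F}_\p$.
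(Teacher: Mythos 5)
Your proof is correct, and it reaches the conclusion by a genuinely different organization than the paper's, even though it leans on the same two ingredients (Proposition \ref{primeideals}(3) to produce a reciprocal generator $\frac{1}{y} \in \p_{x_0}\setminus\p$, and Lemma \ref{pxpy} to descend from $\p_{x_0}$ to $\p_{x_0 y}$). The paper argues by decreasing induction on the height of $\p$, starting from the maximal ideal $\p_u$ and splitting into two cases according to whether $\p$ has two incomparable adjacent primes above it or a unique one; your descent step is essentially the paper's Case 2, but you dispense with the induction, the adjacency analysis, and the case split altogether by instead extracting a minimal element $\p_{x_0}$ of the family $\{\p_x : \p\subseteq\p_x\}$ (nonempty because $\m=\p_1$) and deriving a contradiction from $\p\subsetneq\p_{x_0}$. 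This buys a cleaner, single-pass argument in which finite Krull dimension is used exactly once, as a descending chain condition; the paper's version, in exchange, makes the chain structure between $\p$ and $\m$ explicit, which is in the spirit of its later height computations. All the small verifications you rely on check out: the equivalence $\p\subseteq\p_x \iff \frac{1}{x}\notin\p$, the existence of $y$ with $\frac{1}{y}\in\p_{x_0}\setminus\p$ when the inclusion is strict, and the strictness $\p_{x_0y}\subsetneq\p_{x_0}$ witnessed by $\frac{1}{y}$.
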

 
 \begin{proof}
By Theorem \ref{islocal!!}, $R(D)$ has a unique maximal ideal which is equal to $\p_u$ for any unit $u \in D$. Therefore, we can work by decreasing induction on the height of the primes and, fixed a prime ideal $\p$, we can assume that the thesis holds true for every prime ideal properly containing $\p$. We have now two cases: \\
\bf Case 1: \rm there exist two incomparable prime ideals $\q_1, \q_2$ adjacent to $\p$ such that $\p \subseteq \q_1 \cap \q_2$. \\
Since they are incomparable we must have $\q_1 \cap \q_2 \subsetneq \q_i$ for $i=1,2$.
In this case, by inductive hypothesis we have $\q_1= \p_{x}$, $\q_2= \p_{y}$ for some $x,y \in D$ and, by adjacency and
by Lemma \ref{pxpy} we must have $\p = \p_{xy}$. \\ 
\bf Case 2: \rm there exists a unique minimal prime ideal $\q$ adjacent to $\p$ such that $\p \subsetneq \q$. \\
Also in this case we suppose by induction that $\q = \p_x$. By item 3 of Proposition \ref{primeideals}, $\q$ can be generated by elements of the form $\frac{1}{d}$ with $d \in D$. Thus there exists $y  \in D$ such that $ \frac{1}{y} \in \q \setminus \p $. This implies that $\frac{1}{xy} \not \in \p$ and $\p \subseteq \p_{xy} \subseteq \p_x$. Since $ \frac{1}{y} \in \q $, the adjacency of $\p $ and $\q$ implies that $\p = \p_{xy}$.

Iterating this argument, since $R(D)$ has finite Krull dimension, we obtain that every prime of $R(D)$ is of the form $\p_x$ and in particular there exists a nonzero $x \in D$ such that $(0)R(D)=\p_x$. Hence, $\frac{1}{x}$ is a nonzero element of the pseudoradical of $R(D)$.
 \end{proof}

\begin{cor}
\label{localizcor}
Let $D$ be an integral domain such that $R(D)$ has finite Krull dimension.
Then every localization of $R(D)$ at a prime ideal is the reciprocal complement of some localization of $D$.
\end{cor}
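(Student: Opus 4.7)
The plan is to combine Theorem \ref{pseudoradical}, which says that every prime of $R(D)$ has the form $\p_x$, with the description of the localization $R(D)_{\p_x}$ that is already implicit inside the proof of Proposition \ref{primeideals}(1). I would fix a prime ideal $\p$ of $R(D)$ and aim to produce a multiplicatively closed subset $S \subseteq D$ for which $R(S^{-1}D) = R(D)_{\p}$. The natural candidate is $S = \{1, x, x^2, \ldots\}$, so that $S^{-1}D = D[x^{-1}]$.

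The heart of the argument is the chain of equalities $R(D)_{\p_x} = R(D)[x] = R(D[x^{-1}])$. The right equality comes from Lemma \ref{localizations} applied to the powers of $x$; equivalently, it is the identity $R(D)[x] = R(D[x^{-1}])$ already used in the proof of Proposition \ref{primeideals}(1), which holds without any $K$-algebra hypothesis because $\frac{1}{x}\in R(D)$ forces $x = \frac{1}{1/x}\in R(D[x^{-1}])$. For the left equality, when $x$ is not Egyptian the proof of Proposition \ref{primeideals}(1) shows that $R(D)[x]$ is a local overring of $R(D)$, hence coincides with a localization of $R(D)$ at a single prime; by the uniqueness in part (1) of that proposition, that prime is precisely $\p_x$.

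It remains to dispose of the two degenerate cases. If $x$ is Egyptian, then $\frac{1}{x}$ is already a unit of $R(D)$, so $\p_x$ is the maximal ideal and $R(D)_{\p_x}=R(D)=R(S^{-1}D)$ with $S=\{1\}$. If $\p=(0)$, Theorem \ref{pseudoradical} furnishes a nonzero $x$ with $\p_x = (0)$, so $\frac{1}{x}$ lies in the pseudoradical of $R(D)$; the standard fact on $G$-domains recalled just before Definition \ref{defpx}, applied inside $R(D)$, combined with the identification $\mathcal{Q}(R(D))=\mathcal{Q}(D)$, forces $D[x^{-1}]=\mathcal{Q}(D)$, and both sides of the claimed equality collapse to $\mathcal{Q}(D)$.

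I do not foresee a real obstacle: the substantive content is already contained in Theorem \ref{pseudoradical} and in the proof of Proposition \ref{primeideals}(1). The only mild subtlety is to notice that the identity $R(D)[x] = R(D[x^{-1}])$ used there does not rely on the $K$-algebra reduction of Theorem \ref{reductiontoKalgebra}, so the conclusion really does apply to an arbitrary domain $D$ whose reciprocal complement has finite Krull dimension.
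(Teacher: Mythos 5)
Your proof is correct and follows essentially the same route as the paper: the paper's own argument is exactly the chain $R(D)_{\p_x}=R(D)[x]=R(D[x^{-1}])$ (via Corollary \ref{cor2loc} and Proposition \ref{primeideals}) followed by an appeal to Theorem \ref{pseudoradical}. Your extra care with the degenerate cases and with deriving $R(D)[x]=R(D[x^{-1}])$ directly from Lemma \ref{localizations} (so as not to invoke the $K$-algebra hypothesis of Corollary \ref{cor2loc}) is a welcome refinement but does not change the substance.
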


\begin{proof}
Observe that by Corollary \ref{cor2loc} and Proposition \ref{primeideals} we have $R(D)_{\p_x}= R(D)[x]=R(D[x^{-1}])$. Apply then Theorem \ref{pseudoradical}.
\end{proof}

In \cite[Theorem 146]{Kap}, it is shown that the Krull dimension of a Noetherian G-domain is at least one. Since Noetherian local domains have finite Krull dimension, this fact combined with Theorem \ref{pseudoradical} gives another proof of Corollary \ref{noeth1}.  

\begin{rem}
\label{locallyegyptian}
In the terminology of \cite[Definition 3.4]{Eps1}, an integral domain $D$ is \it generically Egyptian \rm if there exists some $x \in D$ such that $D[x^{-1}]$ is Egyptian. If $R(D)$ has finite Krull dimension, then it has nonzero pseudoradical. In particular there exists $x \in D$ such that $\p_x = (0)$, therefore $\frac{1}{x}$ is in the pseudoradical of $R(D)$ and $R(D[x^{-1}])= R(D)[x] = \mathcal{Q}(D)$. It follows that $D[x^{-1}]$ is Egyptian and $D$ is generically Egyptian.

Conversely, if $D$ is generically Egyptian, then there exists $x \in D$ such that $D[x^{-1}]$ is Egyptian.
It easily follows that $R(D)[x] = \mathcal{Q}(D)$ and $R(D)$ has nonzero pseudoradical.
This proves that $R(D)$ is a G-domain if and only if $D$ is generically Egyptian.

If Conjecture \ref{conj1} were true, it would follow by Theorem \ref{pseudoradical} that every integral domain of finite Krull dimension is generically Egyptian.
\end{rem}

\begin{rem}
\label{infinitedimension}
The results of Theorem \ref{pseudoradical} may not be true for rings of reciprocal complements of infinite Krull dimension. 
In \cite[Example 3.7]{Eps1} is provided an example of an integral domain that is not generically Egyptian. 

Let $D=K[X_1, X_2, \ldots]$ be the polynomial ring in infinitely many variables over a field $K$. The ring $R(D)$ is local and, for every $n$, there exists a prime ideal $\q_n$ such that $R(D)_{\q_n}$ is equal to the reciprocal complement of a polynomial ring in $n$ variables over a field. Indeed, by Corollary \ref{cor2loc}, 
$$R(D)[X_{n+1}, X_{n+2}, \ldots] = R(D[X_{n+1}^{-1}, X_{n+2}^{-1}, \ldots]) = R(K(X_{n+1}, X_{n+2}, \ldots)[X_1, \ldots, X_n]). $$
It follows that $R(D)[X_{n+1}, X_{n+2}, \ldots]$ is local and therefore equal to the localization of $R(D)$ at a prime ideal $\q_n$.
By \cite[Theorem 4.4]{EGL}, $R(D)_{\q_n}$ has Krull dimension $n$. Thus
$R(D)$ must have infinite Krull dimension.

We show that the zero ideal of $R(D)$ is not equal to $\p_z$ for every $z \in D$. To get this, suppose $(0)=\p_z$. This would imply that the quotient field of $D$ is equal to $R(D)[z] = R(D[\frac{1}{z}])$. Hence, $D[\frac{1}{z}]$ is Egyptian, but this is a contradiction. Indeed,
we can find a sufficiently large $n $ such that $z \in K[X_1, \ldots, X_n]$ and $D[\frac{1}{z}] = D'[X_{n+1}, X_{n+2}, \ldots]$ is a polynomial ring over the integral domain $D'= K[X_1, \ldots, X_n][\frac{1}{z}]$, hence it cannot be Egyptian. 
\end{rem}

We have proved that, in the finite dimensional case, localizations at prime ideals of a reciprocal complement are still reciprocal complements. We prove now the same result for quotients at prime ideals, this time without assumptions on the Krull dimension.

\begin{thm}
 \label{quotients}
 Let $D$ be an integral domain and let $\p$ be a prime ideal of $R(D)$. Then there exists a subring $L \subseteq D$ such that 
 $ \frac{R(D)}{\p} \cong R(L). $  
 \end{thm}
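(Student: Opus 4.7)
The natural candidate for $L$ is the set of denominators whose reciprocals survive the quotient:
\[
L := \left\{d \in D : \tfrac{1}{d} \notin \p\right\} \cup \{0\}.
\]
My plan is to verify that $L$ is a subring of $D$ and then to show that the composition $\phi: R(L) \hookrightarrow R(D) \twoheadrightarrow R(D)/\p$, induced by the inclusion $L \subseteq D$, is the desired isomorphism.

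Multiplicative closure of $L$ will follow from primality of $\p$ via $\tfrac{1}{d_1 d_2} = \tfrac{1}{d_1}\tfrac{1}{d_2}$; additive closure is exactly the contrapositive of Proposition \ref{primeideals}(2), with the degenerate cases covered by noting that $0 \in L$ and that $-d \in L$ whenever $d \in L$ (since $\tfrac{1}{-d} = -\tfrac{1}{d}$). Since $1 \in L$ (the ideal $\p$ being proper), $L$ is a subring of $D$. Surjectivity of $\phi$ is then immediate on generators: for any $d \in D\setminus\{0\}$, the class $\overline{1/d}$ is either $0$ (when $\tfrac{1}{d} \in \p$) or equal to $\phi(\tfrac{1}{d})$ (when $d \in L$).

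The main obstacle is injectivity. One might hope to argue directly from Proposition \ref{primeideals}(3), but a minimal $D$-representation of an element of $R(L)$ need not consist of reciprocals of elements of $L$, so that route is not obviously available. Instead, I will work inside the residue field $\kappa := \mathrm{Frac}(R(D)/\p)$ and build an auxiliary map $\psi: L \to \kappa$ by
\[
\psi(l) := \overline{1/l}^{\,-1} \quad \text{for } l \in L\setminus\{0\}, \qquad \psi(0) := 0,
\]
which is well-defined precisely because $l \in L\setminus\{0\}$ forces $\overline{1/l}$ to be a nonzero, hence invertible, element of the field $\kappa$. Multiplicativity is automatic, while additivity is obtained by pushing the $\mathcal{Q}(D)$-identity
\[
\frac{1}{l_1 + l_2}\left(\frac{1}{l_1} + \frac{1}{l_2}\right) = \frac{1}{l_1 l_2}
\]
through the quotient map to $\kappa$ and rearranging. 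Since $\psi$ is injective on $L$, it extends uniquely to a field embedding $\widetilde{\psi}: \mathcal{Q}(L) \hookrightarrow \kappa$; restricted to $R(L) \subseteq \mathcal{Q}(L)$, this embedding sends $\tfrac{1}{l}$ to $\overline{1/l}$, so it coincides with $\phi$ followed by the inclusion $R(D)/\p \hookrightarrow \kappa$. Injectivity of the restriction of $\widetilde{\psi}$ therefore yields injectivity of $\phi$, completing the isomorphism $R(L) \cong R(D)/\p$.
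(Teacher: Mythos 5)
Your proof is correct, and its first half coincides with the paper's: the paper defines exactly the same subring $L=\lbrace 0 \rbrace \cup \lbrace x \in D : \frac{1}{x}\notin \p\rbrace$, verifies closure under products via primality and closure under sums via item 2 of Proposition \ref{primeideals}, and gets surjectivity of the restricted quotient map on generators just as you do. Where you genuinely diverge is injectivity. The paper takes a nonzero $\alpha=\frac{1}{x_1}+\ldots+\frac{1}{x_n}\in R(L)$ of minimal length and invokes item 3 of Proposition \ref{primeideals} to force every $\frac{1}{x_i}$ into $\p$, contradicting $x_i\in L$. Your objection to that route --- that minimality among representations with denominators in $L$ is not the same as the minimality among representations with denominators in $D$ required by the hypothesis of that item --- is a fair reading; the paper's argument nonetheless survives because only the first step of the proof of that item is actually needed: for any nonzero $\alpha=\sum_{i=1}^n\frac{1}{x_i}\in\p$ one has $\frac{1}{x_1\cdots x_n}=\alpha\cdot\bigl(\sum_i \frac{x_1\cdots x_n}{x_i}\bigr)^{-1}\in\p$ with no minimality assumption, whence some $\frac{1}{x_j}\in\p$ and some $x_j\notin L$. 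Your replacement --- the ring homomorphism $\psi\colon L\to\kappa$ into the quotient field $\kappa$ of $R(D)/\p$ given by $l\mapsto \overline{1/l}^{\,-1}$, with trivial kernel, extended to a field embedding of $\mathcal{Q}(L)$ that restricts to $\phi$ on $R(L)$ --- is a complete and clean alternative; note only that its additivity check needs $\overline{1/(l_1+l_2)}$ to be invertible, i.e.\ it relies on the subring property of $L$ already being in place, so the logical order you chose is the right one. The cost is the construction and verification of the auxiliary map $\psi$ (whose additivity rests on the same identity $\frac{1}{x+y}\bigl(\frac{1}{x}+\frac{1}{y}\bigr)=\frac{1}{xy}$ that underlies Proposition \ref{primeideals}(2)); the gain is an injectivity proof that bypasses all bookkeeping about minimal-length representations and makes the structural reason for injectivity transparent.
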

 
 \begin{proof}
Set $L:=\lbrace 0 \rbrace \cup \lbrace x \in D \, : \, \frac{1}{x} \not \in \p \rbrace$. We prove that $L$ is a subring of $D$. 

All the Egyptian elements of $D$ (in particular all the units) are also elements of $L$. Thus, $L = \lbrace 0 \rbrace \cup E $ if and only if $\p$ is the maximal ideal of $R(D)$. In this case the result follows by Theorem \ref{reductiontoKalgebra}.
Suppose then that $\p$ is not maximal. In this case we need to prove that $L$ is closed under sums and products. 

Pick nonzero elements $x,y \in L$. We can further assume $x,y$ to be not associated in $D$ since the units of $D$ are also elements of $L$ (in particular $x+y \neq 0$).
By definition of $L$, $ \frac{1}{x}, \frac{1}{y} \not \in \p $. This immediately implies that $ \frac{1}{xy} \not \in \p $, hence $xy \in L$.
By item 2 of Proposition \ref{primeideals}, this also implies that $ \frac{1}{x+y} \not \in \p $, hence $x+y \in L$.

Let now $\pi$ be the canonical quotient map $R(D) \twoheadrightarrow  \frac{R(D)}{\p}$ and let $\pi'$ be its restriction to the subring $R(L) \subseteq R(D)$. By definition of $L$, $\pi'$ is still surjective. 
To prove injectivity pick a sum of reciprocals $\alpha= \frac{1}{x_1}+\ldots + \frac{1}{x_n} $ with $x_1, \ldots, x_n \in L$ and suppose without loss of generality that that it cannot be rewritten as a sum of less than $n \geq 1$ reciprocals (in particular $\alpha \neq 0$). If $\pi'(\alpha)=0$, we would have $\alpha \in \p$. But
by item 3 of Proposition \ref{primeideals} this forces $\frac{1}{x_1},\ldots, \frac{1}{x_n} \in \p $ contradicting the fact that $x_1, \ldots, x_n \in L$. It follows that $\pi'$ is injective and therefore an isomorphism.
 \end{proof}

\section{Finitely generated $K$-algebras and Krull dimension of reciprocal complements}

In this section we discuss some result related with the Krull dimension of reciprocal complements. In Conjecture \ref{conj1}, we conjectured that for any integral domain $D$, the inequality $\dim(R(D)) \leq \dim(D)$ holds. We can prove this inequality for finitely generated algebras over a field $K$. Thanks to Theorem \ref{reductiontoKalgebra} it is not restrictive to work with $K$-algebras. In the case the algebra is also finitely generated, the main tool that we use in the proof is the well-known Zariski's Lemma (see \cite[Proposition 7.9]{AM} or \cite{zariski}), stating that the residue fields at maximal ideals of a finitely generated $K$-algebra are (isomorphic to) algebraic extensions of the base field $K$.

\begin{lemma}
\label{lemfgalg}
Let $D = K[f_1, \ldots, f_n]$ be a finitely generated $K$-algebra. Let $S = K[x] \setminus \lbrace 0 \rbrace$ for some $x \in D$. Then $S^{-1}D$ is a finitely generated $K(x)$-algebra.
\end{lemma}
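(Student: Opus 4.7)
The plan is to split the proof into two cases depending on whether $x$ is algebraic or transcendental over $K$, since the nature of the multiplicative set $S = K[x]\setminus\{0\}$ is qualitatively different in the two situations.

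In the algebraic case, I would invoke the standard fact that an integral domain which is a finite-dimensional algebra over a field is itself a field, applied to the subring $K[x]$ of $D$. This yields $K[x] = K(x)$, and so every element of $S$ is already a unit in $K[x] \subseteq D$. Consequently $S^{-1}D = D$ literally, and since $K(x) \subseteq D$, the ring $D$ is naturally a $K(x)$-algebra; it is still generated as such by the same elements $f_1, \ldots, f_n$, because $K(x)[f_1,\ldots,f_n]$ automatically contains $K[f_1,\ldots,f_n] = D$ and is contained in $D$.

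The real content is in the transcendental case. Here $K[x]$ is isomorphic to a polynomial ring in one variable, and its field of fractions, which is exactly what one gets by inverting $S$, is $K(x)$. Hence $K(x) \subseteq S^{-1}D$. I would then claim the equality $S^{-1}D = K(x)[f_1, \ldots, f_n]$. The inclusion $\supseteq$ is immediate since the $f_i$ and $K(x)$ both lie in $S^{-1}D$. For the reverse inclusion I would pick an arbitrary element $a/s$ with $a \in D$ and $s \in S$, observe that $a$ is by definition a $K$-polynomial in $f_1, \ldots, f_n$ and that $s^{-1} \in K(x)$, and conclude that $a \cdot s^{-1} \in K(x)[f_1, \ldots, f_n]$. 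This expresses $S^{-1}D$ as a finitely generated $K(x)$-algebra with the same $n$ generators as $D$.

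I do not expect a genuine obstacle: the argument is essentially formal once the algebraic/transcendental dichotomy is in place, and notably Zariski's Lemma is not required at this step (only in the main theorem that uses this lemma). The only point worth a brief verification is that $x$ is not assumed to appear among the generators $f_i$, but this is harmless because $x \in D$ is automatically a $K$-polynomial in $f_1, \ldots, f_n$, so no extra generator needs to be added.
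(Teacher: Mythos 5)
Your proposal is correct and follows essentially the same route as the paper: observe that $K(x) \subseteq S^{-1}D$ and then note that every element $a/s$ with $a \in D$, $s \in K[x]\setminus\{0\}$ lies in $K(x)[f_1,\ldots,f_n]$, so the same $n$ generators suffice over $K(x)$. The only difference is that the paper treats the containment $K(x) \subseteq S^{-1}D$ as immediate, whereas you justify it via the algebraic/transcendental dichotomy — a harmless (and slightly more careful) elaboration, and your phrasing of $a$ as a $K$-polynomial in the $f_i$ is in fact more precise than the paper's ``$f = \sum a_i f_i$''.
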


\begin{proof}
Clearly $S^{-1}D$ contains the field $K(x)$ and therefore is a $K(x)$-algebra.
A general element of $S^{-1}D$ has the form $\phi= \frac{f}{g}$ with $f \in D$ and $g \in K[x]$. Writing $f = \sum_{i=1}^n a_i f_i$ with $a_i \in K$, we obtain $ \phi= \sum_{i=1}^n \frac{a_i}{g} f_i $ where $ \frac{a_i}{g} \in K(x). $ 
\end{proof}

\begin{thm}
 \label{fgalgebras}
 Let $D $ be a finitely generated $K$-algebra. Then $\dim(R(D)) \leq \dim(D)$.
 \end{thm}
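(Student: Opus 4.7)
The plan is an induction on $d := \dim(D)$. The base case $d = 0$ is immediate: a zero-dimensional integral domain is a field, so $R(D) = D$ also has dimension $0$. For the inductive step, assume $d \geq 1$ and that the inequality holds for every finitely generated algebra domain over any field of Krull dimension strictly less than $d$. It then suffices to bound the length $n$ of an arbitrary chain $\p_0 \subsetneq \p_1 \subsetneq \cdots \subsetneq \p_n$ of prime ideals of $R(D)$ by $d$; the case $n = 0$ is trivial, so assume $n \geq 1$.

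By Theorem \ref{islocal!!}, the unique maximal ideal $\m$ of $R(D)$ is generated by the reciprocals $\frac{1}{d}$ of the non-Egyptian elements of $D$. Since $\p_{n-1} \subsetneq \m$, there must exist a non-Egyptian $x \in D$ with $\frac{1}{x} \notin \p_{n-1}$, and by Lemma \ref{algegyp} any such $x$ is transcendental over $K$. Setting $S = K[x] \setminus \{0\}$, Lemma \ref{lemfgalg} shows that $S^{-1}D$ is a finitely generated $K(x)$-algebra; combining the transcendence of $x$ with the standard equality between Krull dimension and transcendence degree of the fraction field for finitely generated algebra domains over a field (a consequence of Noether normalization, ultimately resting on Zariski's Lemma) yields $\dim(S^{-1}D) = d - 1$.

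Applying the inductive hypothesis to $S^{-1}D$, now viewed as a finitely generated $K(x)$-algebra, gives $\dim(R(S^{-1}D)) \leq d - 1$. Corollary \ref{cor2loc} together with the argument in the proof of Proposition \ref{primeideals} identifies $R(S^{-1}D) = R(D)[x] = R(D)_{\p_x}$, where $\p_x$ is the prime of Definition \ref{defpx}. The maximality of $\p_x$ with respect to excluding $\frac{1}{x}$, combined with $\frac{1}{x} \notin \p_{n-1}$, forces $\p_{n-1} \subseteq \p_x$, so the initial segment $\p_0 \subsetneq \cdots \subsetneq \p_{n-1}$ descends to a chain of length $n - 1$ in $R(D)_{\p_x}$. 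Hence $n - 1 \leq \dim(R(D)_{\p_x}) \leq d - 1$, giving $n \leq d$, as required. The main delicate point is getting the inductive hypothesis to apply to $S^{-1}D$ with the new base field $K(x)$, which needs both $S^{-1}D$ to remain a finitely generated algebra (Lemma \ref{lemfgalg}) and its Krull dimension to drop exactly by one; this drop is precisely what the non-Egyptian, hence transcendental, choice of $x$ buys us, and without it the induction would collapse.
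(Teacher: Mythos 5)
Your proof is correct and follows essentially the same strategy as the paper's: induct on $\dim(D)$, choose a non-Egyptian (hence transcendental) $x$ whose reciprocal avoids the relevant prime, identify $R(S^{-1}D)=R(D)[x]=R(D)_{\p_x}$ via Corollary \ref{cor2loc} and Proposition \ref{primeideals}, and apply the inductive hypothesis to the finitely generated $K(x)$-algebra $S^{-1}D$. The only differences are cosmetic: you justify the dimension drop $\dim(S^{-1}D)=\dim(D)-1$ by the transcendence-degree formula where the paper argues via Zariski's Lemma that every maximal ideal of $D$ meets $K[x]\setminus\{0\}$, and you bound an arbitrary chain of primes directly rather than deriving a contradiction from a single non-maximal prime of height $\dim(D)$.
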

 
 \begin{proof}
Let us work by induction on $n=\dim(D)$. If $n=0$, $D$ is a field and $D=R(D)$. Hence, suppose by way of contradiction $\dim(R(D)) > \dim(D)=n > 0$.
By these assumptions, there exists a non-maximal prime ideal $\q$ of $R(D)$ such that $\hgt \q = n$. Calling $\m$ the maximal ideal of $R(D)$, we can find an element $x \in D$ such that $\frac{1}{x} \in \m \setminus \q$ (recall that by item 3 of Proposition \ref{primeideals}, every prime ideal of $R(D)$ is generated by reciprocals of elements of $D$). By Definition \ref{defpx} we have $\q \subseteq \p_x \subsetneq \m$. 

Let $S = K[x] \setminus \lbrace 0 \rbrace$.
Combining Corollary \ref{cor2loc} and item 1 of Proposition \ref{primeideals}, we get
$$ R(D)_{\p_x} = R(D)[x] = R(S^{-1}D). $$
Set $T=S^{-1}D$. By Lemma \ref{lemfgalg}, $T$ is a finitely generated $K(x)$-algebra. We want to prove that $\dim(T) < \dim(D)$ in order to apply the inductive hypothesis on $T$. To get $\dim(T) < \dim(D)$ we need to show that every maximal ideal of $D$ contains a nonzero element of $K[x]$. But this follows by Zariski's Lemma. Indeed, let $\mathfrak{M}$ be a maximal ideal of $D$. The residue field $\frac{D}{\mathfrak{M}}$ is algebraic over $K$. Thus, if $x \in \mathfrak{M}$ then $x \in \mathfrak{M} \cap K[x] $, otherwise the image of $x$ in $\frac{D}{\mathfrak{M}}$ is algebraic over $K$ and therefore there exists a polynomial $f(t) \in K[t]$ such that $f(x) \in \mathfrak{M}$. In particular $f(x) \in \mathfrak{M} \cap K[x]$. Furthermore, $f(x) \neq 0$, otherwise $x $ would be an algebraic element over $K$ and therefore an Egyptian element of $D$ by Lemma \ref{algegyp}, but this would contradict the fact that $\frac{1}{x} \in \m $.

This proves $\dim(T) < \dim(D)$ and we can finally apply the inductive hypothesis to $T$ to get
$$ n = \hgt \q \leq \hgt \p_x = \dim(R(D)_{\p_x}) = \dim(R(T)) \leq \dim(T) < \dim(D)=n.  $$ This yields a contradiction and forces $\dim(R(D)) \leq \dim(D)$.
 \end{proof}

We use now a similar method involving Zariski's Lemma to construct an interesting example.
In \cite[Theorem 2.10]{Eps3}, it is proved that the reciprocal complement of an Euclidean domain is either a field or a DVR. We show here that this same property fails already for the class of PIDs.

We provide indeed an example of a PID $D$ such that $R(D)$ is not integrally closed. 
This proves that the reciprocal complement of a Pr\"{u}fer domain is not always a Pr\"{u}fer domain and that in general $R(D)$ is not isomorphic to an overring of $D$, conversely to what seen in \cite{EGL} in the case of the polynomial ring $D=K[X_1, \ldots, X_n]$.

\begin{example}
\label{examplenotoverring}
\rm Let us start with the polynomial ring $D=K[X,Y]$ in two variables over a 
perfect field $K$. Let $f=  Y^2-X^3$ and consider the multiplicatively closed set $S = K[f] \setminus \lbrace  0 \rbrace$. By Zariski's Lemma (or Hilbert's Nullstellensatz in the case $K$ is algebraically closed), $S$ intersects all the maximal ideals of $D$. 
It follows that the ring $T=S^{-1}D$ has Krull dimension smaller than two. By Corollary \ref{cor2loc}, $R(T)= R(D)[f]$. 

We adopt now the same notation of \cite[Notation 3.1]{EGL}, defining an automorphism $\sigma$ of $ K(X,Y)$ mapping $X \to X^{-1}$ and $Y \to Y^{-1}$, and setting $R^*= \sigma(R(D))$. Then we have $R(T) \cong R^*[\alpha^{-1}]$ where $\alpha= \sigma(\frac{1}{f}) = \frac{X^3Y^2}{X^3-Y^2} \in R^*$. 

In \cite[Notation 5.4, Proposition 5.5]{EGL} it is shown that there exists a valuation overring $V_{2,3}$ of $R^*$ in which $\alpha$ is a unit. This implies the existence of a prime ideal $\p$ of $R^*$ not containing $\alpha$ (recall that $\alpha$ is not a unit in $R^*$). Hence, $R(T) \neq \mathcal{Q}(D) $ and $T$ is not Egyptian. This shows that $\dim(T)=1$ 
and it is a PID (it is a one-dimensional localization of a Noetherian UFD). 
Also recall that by \cite[Theorem 4.4]{EGL}, $\dim(R(D))=2$, hence $\hgt(\p)=1$ and $\p = \sigma(\p_{f})$ is the unique nonzero prime not containing $\alpha$.

It remains to prove that $R(T)$ is not integrally closed. By \cite[Proof of Theorem 7.2]{EGL}, we have 
$$ D[\alpha] \cong \frac{K[X,Y,Z]}{(-fZ - X^3Y^2)}. $$
Moreover, the ideal $(X,Y)D[\alpha]$ is a prime ideal of height one.

The integral closure of $D[\alpha]$ is $D[\alpha][\beta]$ where $\beta = \frac{X^5Y}{X^3-Y^2}$. The proof of this fact is postponed to Lemma \ref{exintclos}.
Furthermore, $\beta^2, \beta X, \beta Y \in (X,Y)D[\alpha]$ and therefore the integral closure of $D[\alpha]_{(X,Y)}$ is local (and one-dimensional Noetherian), hence a DVR. Call $V$ such a DVR.  

Again by \cite[Proof of Theorem 7.2]{EGL}, we have $$D[\alpha]_{(X,Y)}\subseteq R^*[\alpha^{-1}] = R^*_{\p} \subseteq V.$$ 
Thus $V$ is the unique DVR containing $R^*_{\p} $ and we must have $V=V_{2,3}.$ 
If $v$ is the valuation associated to $V$, then by definition of $V_{2,3}$ (see \cite[Notation 5.4, proof of Theorem 5.8]{EGL}), $v(\beta)=1$ and for every $\phi \in \p$, $v(\phi) \geq 2$. It follows that $\beta \in V \setminus R^*_{\p}$ and $R^*_{\p} = R^*[\alpha^{-1}] \cong R(T)$ is not integrally closed.
\end{example}

\begin{lemma}
\label{exintclos}
Let $D, f, \alpha, \beta $ be defined as in Example \ref{examplenotoverring}. Then $D[\alpha, \beta]$ is the integral closure of $D[\alpha]$.
\end{lemma}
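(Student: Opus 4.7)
My plan is to split the statement into two parts: first, to show that $\beta$ is integral over $D[\alpha]$ (which gives $D[\alpha,\beta]\subseteq N$, where $N$ denotes the integral closure of $D[\alpha]$ in $K(X,Y)$), and second, to show that $D[\alpha,\beta]$ is itself integrally closed in $K(X,Y)$, which forces the reverse inclusion and gives $D[\alpha,\beta]=N$.

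For the integrality of $\beta$, a direct computation in $K(X,Y)$ gives
$$X\alpha^{2}+X^{4}\alpha=\frac{X^{7}Y^{4}+X^{7}Y^{2}(X^{3}-Y^{2})}{(X^{3}-Y^{2})^{2}}=\frac{X^{10}Y^{2}}{(X^{3}-Y^{2})^{2}}=\beta^{2},$$
so $\beta$ satisfies the monic equation $T^{2}-X\alpha^{2}-X^{4}\alpha=0$ over $D[\alpha]$; in particular $D[\alpha,\beta]=D[\alpha]+D[\alpha]\beta$ is finitely generated as a $D[\alpha]$-module.

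To prove normality of $D[\alpha,\beta]$ I would verify Serre's criterion $R_{1}+S_{2}$. For $R_{1}$: the identities $Y\beta=X^{2}\alpha$ and $X\beta=Y(\alpha+X^{3})$ give $D[\alpha,\beta][X^{-1}]=D[\alpha][X^{-1}]$ and $D[\alpha,\beta][Y^{-1}]=D[\alpha][Y^{-1}]$, and the Jacobian criterion applied to $D[\alpha]\cong K[X,Y,Z]/(Y^{2}Z-X^{3}Z+X^{3}Y^{2})$ shows the singular locus of $D[\alpha]$ is exactly the line $V(X,Y)$; this takes care of all height 1 primes disjoint from $V(X,Y)$. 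Since $D[\alpha,\beta]/(X,Y)\cong K[\alpha][\beta]/(\beta^{2})$ has a unique minimal prime, there is a single height 1 prime $\mathfrak{q}=(X,Y,\beta)$ of $D[\alpha,\beta]$ above $(X,Y)D[\alpha]$, and in $D[\alpha,\beta]_{\mathfrak{q}}$ the elements $\alpha$ and $\alpha+X^{3}$ are units (nonzero in the residue field $K(\alpha)$); hence $\beta^{2}=X\alpha(\alpha+X^{3})$ yields $(X)=(\beta^{2})$ and $X\beta=Y(\alpha+X^{3})$ yields $(Y)=(\beta^{3})$, so the maximal ideal is principal generated by $\beta$ and $D[\alpha,\beta]_{\mathfrak{q}}$ is a DVR.

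For $S_{2}$ I need depth $\geq 2$ at each height 2 maximal ideal of $D[\alpha,\beta]$. At maximal ideals disjoint from $V(X,Y)$ this follows from $R_{1}$. A maximal ideal containing $(X,Y)$ has the form $\mathfrak{m}=(X,Y,\beta,g(\alpha))$ for some irreducible $g\in K[\alpha]$, and I would exhibit a regular sequence of length 2 inside $\mathfrak{m}$ as follows. If $g(\alpha)=\alpha$, take $(\alpha+X^{3},Y)$: substituting $\alpha=-X^{3}$ collapses the defining relations to $X^{6}=0$, $X\beta=0$, $\beta^{2}=0$, $X^{5}=-Y\beta$, and a reduction argument using these identities shows that $D[\alpha,\beta]/(\alpha+X^{3})$ is a free $K[Y]$-module on the basis $\{1,X,X^{2},X^{3},X^{4},\beta\}$, so $Y$ is a non-zero-divisor. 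If $g(\alpha)=\alpha-c$ with $c\neq 0$, take $(\alpha-c,Y)$: working modulo $\alpha-c$, the induced relations localized at $\mathfrak{m}$ (where $c+X^{3}$ and $c-Y^{2}$ are units) give $(X\beta)=(Y)$ and $(Y\beta)=(X^{2})$, hence $(X)=(\beta^{2})$ and $(Y)=(\beta^{3})$, so $(D[\alpha,\beta]/(\alpha-c))_{\mathfrak{m}}$ is a one-dimensional local domain with principal maximal ideal generated by $\beta$, i.e., a DVR, and $Y$ is again a non-zero-divisor. The main obstacle I anticipate is this $S_{2}$ verification at the "bad" maximal ideals: the first element of the regular sequence must be adapted to $g$, and in each case the quotient-structure arguments require careful use of the three relations $Y\beta=X^{2}\alpha$, $X\beta=Y(\alpha+X^{3})$, $\beta^{2}=X\alpha(\alpha+X^{3})$; this is precisely the local structure that the paper's preceding observation $\beta^{2},\beta X,\beta Y\in(X,Y)D[\alpha]$ makes available.
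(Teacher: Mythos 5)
Your proposal is correct in outline but takes a genuinely different route from the paper. The paper does not run Serre's criterion by hand on the subring of $K(X,Y)$: it presents $D[\alpha,\beta]$ as a quotient of the explicit complete intersection $A=K[X,Y,Z,W]/(-fZ-X^3Y^2,\,YW-ZX^2)$, gets $S_2$ for free from the Cohen--Macaulayness of $A$, gets $R_1$ (in fact normality of every component of $A$ at once) from the Jacobian criterion applied to the $2\times 4$ Jacobian matrix, and then identifies $D[\alpha,\beta]$ with one of the normal direct factors. You instead verify $R_1$ and $S_2$ directly on the concrete overring: your identities $\beta^2=X\alpha(\alpha+X^3)$, $Y\beta=X^2\alpha$ and $X\beta=Y(\alpha+X^3)$ all check out, the explicit monic equation for $\beta$ replaces the paper's citation of \cite{EGL}, and your treatment of the unique height-one prime $(X,Y,\beta)$ (where $\alpha$ and $\alpha+X^3$ are units, so $(X)=(\beta^2)$, $(Y)=(\beta^3)$ and the localization is a DVR with uniformizer $\beta$) is a clean substitute for the minor computation at the singular locus. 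What your route costs is exactly the $S_2$ casework you flag: the argument at $\mathfrak{m}=(X,Y,\beta,\alpha)$ rests on the unproved claim that $D[\alpha,\beta]/(\alpha+X^3)$ is free over $K[Y]$ on $\lbrace 1,X,\dots,X^4,\beta\rbrace$, and substantiating this (or even the asserted isomorphism $D[\alpha,\beta]/(X,Y)\cong K[\alpha][\beta]/(\beta^2)$) amounts to knowing that your three relations, together with the defining equation of $D[\alpha]$, generate the entire relation ideal of $D[\alpha,\beta]$ --- which is precisely the presentation the paper writes down and exploits. So the two arguments distribute the effort differently: the paper front-loads the work into the complete-intersection presentation and then gets $S_2$ and $R_1$ uniformly, while you avoid the Jacobian-minor computation at the price of a pointwise depth check whose hardest case still implicitly requires that presentation.
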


\begin{proof}
It can be easily shown (see \cite[Proof of Theorem 5.8]{EGL}) that $\beta$ is integral over $D[\alpha]$. Hence $D[\alpha, \beta]$ is an integral domain of Krull dimension two. 

Consider the ideal $I= (-fZ - X^3Y^2, YW-ZX^2)$ in the polynomial ring $K[X,Y,Z,W]$. The ring $D[\alpha, \beta]$ can be presented as a quotient of the ring $A= K[X,Y,Z,W]/I, $ by mapping $Z \to \alpha$, $W \to \beta$. The ideal $I$ is generated by a regular sequence, hence $A$ is Cohen-Macaulay of dimension 2. The Jacobian matrix of $I$ is
$$ \bmatrix 
3X^2(Z-Y^2) & -2Y(Z+X^3) & X^3-Y^2 &  0  \\  
-2XZ & W & -X^2 &  Y \\
 \endbmatrix. $$
An easy computation shows that the ideal generated by the maximal minors of this matrix has height $\geq 2$.
Applying the Jacobian criterion \cite[Theorem 18.15]{Eis} to $A$, we get that $A$ is a direct product of normal domains (at this point we require $K$ to be a perfect field). Since $D[\alpha, \beta]$ is (isomorphic to) a quotient of $A$ and the two rings have the same dimension, we must have that $D[\alpha, \beta]$ is a quotient of $A$ at a minimal prime. Therefore $D[\alpha, \beta]$ is isomorphic to one of the direct factors of $A$. It follows that $D[\alpha, \beta]$ is integrally closed.
\end{proof}

\section{Semigroup algebras}

In this section we study the reciprocal complements of semigroup algebras. Many results obtained in \cite{EGL} in the special case of polynomial rings in several variables over a field are generalized to this setting. By \cite[Theorem 2.6]{Eps1}, we know that semigroup algebras over a field are never Egyptian (except if the semigroup is a group). Therefore, they form an interesting class of integral domains for which the reciprocal complement is not trivial and worth to be studied. \\

Let $G \subseteq \oplus_{i = 1}^N \mathbb{R}$ be a totally ordered abelian group of rank $N$, written in additive notation. 
 Denote by $\boldsymbol{0}$ the unit element of $G$ and by $G_{\geq \boldsymbol{0}} $ the submonoid of elements of $G$ larger than or equal to $\boldsymbol{0}$. 

Let $S$ be a subsemigroup of $G_{\geq \boldsymbol{0}} $ containing $\boldsymbol{0}$ and
let $K$ be any field. We consider semigroup algebras of the form $D= K[S]= K[X^s \, | \, s \in S]$. Denote by $\m_S$ the maximal ideal of $D$ generated by all the elements $X^s$ for $s > \boldsymbol{0}$. The quotient field of $D$ is the field $K(S)= K(X^s \, | \, s \in S) $. In general, if not otherwise specified, we suppose $K(S) = K(G)$ (i.e. $G$ is the group generated by $S$). 
Given an element $\phi = \sum_{i=1}^k X^{s_i} \in K[S]$ we will refer to the set $ \lbrace s_1, \ldots, s_k \rbrace$ as the support of $\phi$.

We want to study the reciprocal complement $R(D)$. For simplicity of notation we denote this ring also by $R_S$.
It is not much restrictive to study semigroup algebras over fields rather than over rings. Indeed, in light of Theorem \ref{reductiontoKalgebra}, to study the reciprocal complement it is sufficient to restrict to $K$-algebras such that the only units are the elements of $K$. For this reason we will also usually suppose $S \subseteq G_{\geq \boldsymbol 0}$.


Polynomial rings in finitely many variables over $K$ are special examples of this construction. 
Here we use a similar method as the one used in \cite{EGL}, defining an automorphism of $K(S)$ and studying the (isomorphic) image of $R_S.$ This technique has the advantage of studying the reciprocal complement by identifying it with a proper overring of $D$ (this works for semigroup algebras but it is not true in general as seen in Example \ref{examplenotoverring}). 

Define the automorphism $\sigma: K(G_{\geq \boldsymbol{0}}) \to K(G_{\geq \boldsymbol{0}})$ such that $\sigma(u)=u$ for $u \in K$ and $\sigma(X^g)= X^{-g}$ for every $g \in G_{\geq \boldsymbol{0}}$. 
Set $R^*_S=\sigma(R_S)$. 

Observe that, if $T$ is another semigroup such that $S \subseteq T \subseteq G_{\geq \boldsymbol{0}}$, then clearly $R_S \subseteq R_T$ and, by definition of $\sigma$ we also have $ R^*_S \subseteq R^*_T. $

Notice also that, for $f = \sum_{i=1}^m u_i X^{s_i}$ with $u_i \in K$, $s_i \in S$, we have 
\begin{equation}
\label{eqsigma}
\sigma \left( \frac{1}{f} \right) = \frac{1}{\sum_{i=1}^m u_i X^{-s_i}} = \frac{X^s}{\sum_{i=1}^m u_i X^{s-s_i}},
\end{equation}
where $s$ can be chosen to be any element of $S$ such that $s \geq s_i$ (or even $s - s_i \in S$) for every $i=1, \ldots, n$. 
Elements satisfying this property always exists since $s_1+\ldots + s_n \in S$. For $f= X^s$, we simply get $\sigma \left( \frac{1}{f} \right)= X^s$.
The ring $R^*_S$ is generated over $K$ by all the elements $ \sigma \left( \frac{1}{f} \right) $ for $f \in D$. Every element of $R^*_S$ can be written as a finite sum of such elements. 

The following results generalize the ones already proved in the case where $D$ is a polynomial ring.

\begin{lemma}
\label{lemma1}
The ring $R^*_S$ contains the localization $D_{\m_S}$.
\end{lemma}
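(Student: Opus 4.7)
The plan is to split the inclusion $D_{\mathfrak{m}_S} \subseteq R^*_S$ into two steps: first, that $D \subseteq R^*_S$, and second, that every $h \in D \setminus \mathfrak{m}_S$ is a unit of $R^*_S$. The first step is immediate from the identity $\sigma(1/X^s) = X^s$ combined with $1/X^s \in R_S$, which gives $X^s \in \sigma(R_S) = R^*_S$ for every $s \in S$; together with $K \subseteq R^*_S$ this yields $D = K[X^s : s \in S] \subseteq R^*_S$.

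For the second step, I would first invoke Theorem \ref{reductiontoKalgebra} to reduce to the situation where the only Egyptian elements of $D$ are those in $K^*$. Given then $h = u_0 + \sum_{i=1}^m u_i X^{t_i} \in D \setminus \mathfrak{m}_S$ with $u_0 \in K^*$ and each $t_i > \boldsymbol{0}$, I would pass through $\sigma$: since $\sigma^2 = \mathrm{id}$, one has $1/h = \sigma(1/\sigma(h))$, so it suffices to show $1/\sigma(h) \in R_S$. Now $\sigma(h) = u_0 + \sum u_i/X^{t_i}$ already lies in $R_S$, as a $K$-linear combination of $1$ and the generators $1/X^{t_i}$. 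Since $X^{t_i} \notin K$ for $t_i > \boldsymbol{0}$, the reduction forces each $X^{t_i}$ to be non-Egyptian, and Theorem \ref{islocal!!} then places each $1/X^{t_i}$ in the maximal ideal of $R_S$. Hence $\sigma(h)$ is a unit of $R_S$ plus an element of its maximal ideal, and therefore a unit by the local-ring property; this gives $1/\sigma(h) \in R_S$, and applying $\sigma$ once more yields $1/h \in R^*_S$.

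Combining the two steps, any $h_1/h_2 \in D_{\mathfrak{m}_S}$ equals $h_1 \cdot (1/h_2)$ with both factors in $R^*_S$, so $D_{\mathfrak{m}_S} \subseteq R^*_S$ as required. The main difficulty I expect is conceptual rather than computational: one should resist the temptation to realise $1/h$ directly as an explicit finite sum of generators $\sigma(1/f)$, which becomes awkward when the exponents $t_i$ do not enjoy convenient differences inside $S$. The decisive move is to recycle the structural input already proved, namely that $R_S$ is local with maximal ideal generated by reciprocals of non-Egyptian elements, after which the invertibility of $\sigma(h)$ follows in a single line.
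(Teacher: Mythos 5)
Your two-step architecture is essentially the paper's: $D \subseteq R^*_S$ because $X^s = \sigma(1/X^s)$, and then one shows every $h = u_0 + \sum_{i=1}^m u_i X^{t_i} \in D \setminus \m_S$ is a unit of $R^*_S$ by checking that $\sigma(h) = u_0 + \sum_i u_i X^{-t_i}$ is a unit of $R_S$. The gap is in how you certify that last invertibility. The paper quotes \cite[Lemma 2.3]{EGL}, which makes a nonzero constant plus a finite sum of reciprocals of nonzero elements of $D$ either zero or a unit of $R(D)$, with no hypothesis on the denominators. You instead route through Theorem \ref{islocal!!}, which requires each $1/X^{t_i}$ to lie in the maximal ideal of $R_S$, i.e.\ each $X^{t_i}$ to be non-Egyptian in $K[S]$; and you justify this by saying that ``the reduction forces each $X^{t_i}$ to be non-Egyptian.'' That inference does not work. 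Theorem \ref{reductiontoKalgebra} only says you may replace $D$ by $E^{-1}D$ without changing $R(D)$; it gives no information about which elements of the original ring $K[S]$ are Egyptian. If some $X^{t_i}$ happened to be Egyptian in $K[S]$, the ``reduction'' would simply invert it (so $E^{-1}D$ would no longer be $K[S]$ and the target statement $K[S]_{\m_S}\subseteq R^*_S$ would not be preserved), and $1/X^{t_i}$ would be a unit of $R_S$ rather than a maximal-ideal element, so ``unit plus element of the maximal ideal'' would no longer apply.

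The fact you need --- that the Egyptian elements of $K[S]$ are exactly the elements of $K\setminus\{0\}$ --- is true, but in the paper it is only recorded in Remark \ref{remarksem}, whose justification uses Lemma \ref{lemma1} together with Theorem \ref{totalordervaluation}; so, relative to the paper's development, your argument is circular. To close the gap you should either cite \cite[Lemma 2.3]{EGL} as the paper does, or prove directly that $X^t$ is not a sum of reciprocals for $t>\boldsymbol{0}$: writing $X^t\prod_i d_i=\sum_i\prod_{j\neq i}d_j$ with $d_i\in K[S]$ nonzero, and letting $w(f)$ denote the largest element of the support of $f$ (so that $w(fg)=w(f)+w(g)$ because $G$ is totally ordered and $K[G]$ is a domain, while $w$ of a sum is at most the maximum of the individual $w$'s), one gets $t+\sum_i w(d_i)\le \sum_j w(d_j)-\min_i w(d_i)$, hence $t\le 0$, a contradiction since $w(d_i)\ge\boldsymbol{0}$ for all $i$. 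With that supplement the remainder of your proof, including the final combination $h_1\cdot(1/h_2)$, is correct and coincides with the paper's.
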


\begin{proof}
Pick $f \in D \setminus \m_S$. Then we can write $f = u + \sum_{i=1}^m u_i X^{s_i}$ for $u, u_i $ nonzero elements of $K$. We have $f = \sigma(u + \sum_{i=1}^m u_i X^{-s_i}) \in R^*_S$ and by \cite[Lemma 2.3]{EGL},  $u + \sum_{i=1}^m u_i X^{-s_i}$ is a unit in $R_S$. Therefore $f $ is a unit in $R^*_S$. It follows that $D_{\m_S} \subseteq R^*_S$. 
\end{proof}



The case where $S = G_{\geq \boldsymbol 0}$ is particularly interesting. If $S = \N$ we already know from \cite[Example 2.14]{Eps3} that $D = K[X]$ and $R(D)=K[X^{-1}]_{(X^{-1})}$ is a valuation ring. This is true more in general for every $ G_{\geq \boldsymbol 0} $.
According to the terminology of \cite[Definition 2.1]{Eps3}, the rings $R_{G_{\geq \boldsymbol 0}}$ are \it Bonaccian, \rm meaning that the reciprocal complement is a valuation ring.

\begin{thm}
\label{totalordervaluation}
Suppose $S = G_{\geq  \lbrace \boldsymbol 0 \rbrace}$. Then $R^*_S = D_{\m_S}$ is a valuation domain. 
\end{thm}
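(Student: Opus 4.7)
The plan is to show $R^*_S \subseteq D_{\m_S}$ (the reverse inclusion being Lemma~\ref{lemma1}) and then verify that $D_{\m_S}$ is a valuation domain by identifying it with the valuation ring of the natural ``lowest degree'' valuation on $K(G)$ induced by the total order on $G$. The crucial leverage provided by the hypothesis $S = G_{\geq \boldsymbol{0}}$ is twofold: every $f \in D$ has a \emph{unique} lowest monomial, and one can always multiply numerator and denominator by a large enough power of $X$ without leaving $D$.

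For the inclusion $R^*_S \subseteq D_{\m_S}$, I would observe that $R^*_S$ is generated over $K$ by the elements $\sigma(1/f)$ with $f \in D \setminus \{0\}$, so it suffices to place each such generator in $D_{\m_S}$. Given $f = \sum_{i=1}^m u_i X^{s_i}$ with $s_1 < \cdots < s_m$ in $S$, the hypothesis $S = G_{\geq \boldsymbol{0}}$ makes the choice $s = s_m$ admissible in formula~\eqref{eqsigma}, because $s_m - s_i \geq \boldsymbol{0}$ for every $i$. The resulting expression $X^{s_m}/\sum_i u_i X^{s_m - s_i}$ has numerator in $D$ and denominator in $D \setminus \m_S$ (its constant term is $u_m \neq 0$), so it lies in $D_{\m_S}$, and the inclusion follows.

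For the valuation property, I would define $v : K(G)^\ast \to G$ by setting $v(\phi) = \min\{s_i : u_i \neq 0\}$ on any nonzero $\phi = \sum u_i X^{s_i} \in D$ and extending multiplicatively to fractions. The total order on $G$ ensures that this minimum exists, and a short verification shows that in a product $\phi \psi$ the unique monomial of smallest exponent is the product of the smallest monomials of $\phi$ and $\psi$; hence $v$ is a well-defined valuation. I would then identify its valuation ring with $D_{\m_S}$: for any nonzero $x = p/q \in K(G)$, factor $p = X^{v(p)} p'$ and $q = X^{v(q)} q'$ with $p', q' \in D \setminus \m_S$ (their constant terms are the leading coefficients of $p$ and $q$), so that $x = X^{v(p) - v(q)}(p'/q')$; whichever of $v(x), -v(x)$ is nonnegative places the corresponding element among $x, x^{-1}$ in $D_{\m_S}$.

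I do not anticipate any serious obstacle: the only moving part is the flexibility in the choice of the parameter $s$ in \eqref{eqsigma}, which is precisely what the assumption $S = G_{\geq \boldsymbol{0}}$ supplies, and the construction of the valuation is the standard ``order valuation'' on a totally ordered group algebra.
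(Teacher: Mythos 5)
Your proposal is correct and follows essentially the same route as the paper: both prove $R^*_S \subseteq D_{\m_S}$ by taking $s = s_m$ in equation \eqref{eqsigma}, which is admissible precisely because $S = G_{\geq \boldsymbol{0}}$, and then invoke the standard order valuation $v(X^s)=s$ to see that $D_{\m_S}$ is a valuation domain. The only difference is that you spell out the verification of the valuation property, which the paper dismisses as well known.
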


\begin{proof}
By Lemma \ref{lemma1}, $R^*_S \supseteq D_{\m_S}$. Pick $f \in S \setminus K$ and write $\sigma(\frac{1}{f})= \frac{X^s}{\sum_{i=1}^m u_i X^{s-s_i}}$ as in equation (\ref{eqsigma}). Since $\leq$ is a total order we can assume $s_1 < \ldots < s_m$ and for every $i$ we have $s_m  \geq  s_i $. Thus, for every $i$, $s_m - s_i \in S$ and we can set $s = s_m$. It follows that $\sigma(\frac{1}{f}) \in D_{\m_S}$ and hence $R^*_S = D_{\m_S}$. It is well-known that $D_{\m_S}$ is a valuation domain setting $v(X^s) = s$ for every $s$.
\end{proof} 

In the following, we denote the maximal ideal of $R^*_{G_{\geq \boldsymbol 0}}$ by $\mathfrak{M}$. Hence, we have $R^*_{G_{\geq \boldsymbol 0}}= K[G_{\geq \boldsymbol 0}]_{\mathfrak{M}}.$

\begin{rem}
\label{remarksem}
By the previous results, if $S \subseteq G_{\geq \boldsymbol 0}$, we have inclusions $$ D_{\m_S} \subseteq R^*_S \subseteq K[G_{\geq \boldsymbol 0}]_{\mathfrak{M}}. $$
By Theorem \ref{islocal!!}, the maximal ideal of $R^*_{S}$ is generated by all the elements $ \sigma \left( \frac{1}{f} \right) $ for $f \in \m_S$. In particular we can obtain the maximal ideal of $R^*_{S}$ as $\mathfrak{M} \cap R^*_S,$ and the only Egyptian elements of $D$ are the nonzero elements of $K$.
\end{rem}

\begin{rem}
\label{remarkbonaccian}
In Example \ref{examplenotoverring} we described a Pr\"{u}fer domain (even a PID) that is not Bonaccian. Instead the ring $D=K[Z, Y, \frac{Y}{Z}, \frac{Y}{Z^2}, \ldots]$ is an example of a Bonaccian domain that is not Pr\"{u}fer. This ring can be seen as a semigroup algebra taking the semigroup $\N \oplus \N$ ordered lexicographically and setting $Z=X^{(0,1)}$, $Y=X^{(1,0)}$. By Theorem \ref{totalordervaluation}, $R(D) \cong D_{(Z)}$ is a valuation domain and $D$ is Bonaccian. But the localization $$D_{(Z+1, Y, \frac{Y}{Z}, \frac{Y}{Z^2}, \ldots)} = K[Z,Y]_{(Z+1, Y)}$$ is not a Pr\"{u}fer domain. Thus neither $D$ is a Pr\"{u}fer domain.
The classification of all Bonaccian domains is still an open problem.
\end{rem}

The fact that, for semigroup algebras, the reciprocal complement is isomorphic to an overring allows to get a precise characterization of when it is Noetherian. 

  \begin{prop}
\label{noethsem}
The following conditions are equivalent:
\begin{enumerate}
\item[(1)] $R(K[S])$ is Noetherian.
\item[(2)] $K[S]$ is Noetherian and one dimensional.
\end{enumerate}
\end{prop}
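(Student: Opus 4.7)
The plan is to exploit the chain of inclusions
\[ D_{\m_S} \subseteq R^*_S \subseteq V := K[G_{\geq \boldsymbol 0}]_{\mathfrak{M}} \]
recorded in Remark \ref{remarksem}, together with the fact that all three rings share the quotient field $K(G)$ and that $V$ is a valuation domain by Theorem \ref{totalordervaluation}. The main classical input will be the Krull--Akizuki theorem: every overring of a one-dimensional Noetherian domain inside its quotient field is again Noetherian.

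For the implication $(2) \Rightarrow (1)$, I would argue that if $K[S]$ is Noetherian and one-dimensional, then $D_{\m_S}$ is a one-dimensional Noetherian local domain, so Krull--Akizuki applied to the overring $R^*_S \supseteq D_{\m_S}$ immediately yields that $R^*_S \cong R(K[S])$ is Noetherian.

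For the reverse implication $(1) \Rightarrow (2)$, I would first invoke Corollary \ref{noeth1} to obtain $\dim R(K[S]) \leq 1$ and dispose of the trivial case where this dimension is zero (in which $K[S]$ itself must be a field). In the remaining case, $R^*_S$ is a one-dimensional Noetherian domain and $V$ is one of its overrings, so Krull--Akizuki forces $V$ to be Noetherian as well. A Noetherian valuation domain is necessarily a DVR, so the value group $G$ of $V$ is isomorphic to $\Z$. Under the standing convention $K(S) = K(G)$, the semigroup $S \subseteq \N$ then generates $\Z$ as a group, making $S$ a numerical semigroup; its complement in $\N$ is finite, hence $S$ is finitely generated and $K[S]$ is a Noetherian one-dimensional $K$-algebra.

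The main conceptual observation is that Krull--Akizuki can be applied in \emph{both} directions of the equivalence thanks to the sandwich $D_{\m_S} \subseteq R^*_S \subseteq V$; once this is in place, everything else reduces to standard facts, namely that a one-dimensional Noetherian valuation domain is a DVR, and that a submonoid of $\N$ whose group completion is all of $\Z$ is numerical and hence finitely generated.
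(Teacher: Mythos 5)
Your proposal is correct and follows essentially the same route as the paper: both directions rest on the sandwich $D_{\m_S} \subseteq R^*_S \subseteq K[G_{\geq \boldsymbol 0}]_{\mathfrak{M}}$ together with Krull--Akizuki, Corollary \ref{noeth1} for the dimension bound, and the observation that a Noetherian valuation domain is a DVR, forcing $G_{\geq \boldsymbol 0} \cong \N$. Your explicit handling of the zero-dimensional case (where $K[S]$ degenerates to a field) is a minor point the paper glosses over, but otherwise the arguments coincide.
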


\begin{proof}
Since $ R^*_S$ is an overring of $K[S]$ isomorphic to $R(K[S])$, one implication follows by Krull-Akizuki's Theorem \cite[Theorem 11.7]{matsumura}.
Suppose then $R(K[S]) \cong R^*_S$ to be Noetherian. 
By Corollary \ref{noeth1}, it has to be one dimensional. In this case Krull-Akizuki's Theorem implies that $K[G_{\geq \boldsymbol 0}]_{\mathfrak{M}}$ is Noetherian. Since $K[G_{\geq \boldsymbol 0}]_{\mathfrak{M}}$ is a valuation domain, it must be a DVR. It follows that $G_{\geq \boldsymbol 0} \cong \N$ and $S$ is isomorphic to a numerical semigroup. Thus, $K[S]$ is Noetherian and one dimensional. 
\end{proof}

\begin{rem}
For arbitrary integral domain $D$ we know that the same result of Proposition \ref{noethsem} is not true. However we ask whether this holds for any $K$-algebra $D$ whose only Egyptian elements are the nonzero element of $K$. 
\end{rem}

We focus now on the case where $K[G_{\geq \boldsymbol 0}]$ is the integral closure of $D=K[S]$. In this case we can give an explicit description of $R^*_S$ as the localization of a semigroup algebra at the maximal ideal generated by the homogeneous elements. We recall that the integral closure of semigroup algebras is a root closure (see \cite[Section 12]{gilmersem}, \cite{rootclosure}) and $K[G_{\geq \boldsymbol 0}]$ is the integral closure of $D$ if and only if for every $g \in G_{\geq \boldsymbol 0}$ there exists $e >0$ such that $eg \in S$. This case includes the case where $S$ is a numerical semigroup. 
For the next main result, we suppose the field $K$ to be of characteristic zero.
We define now a semigroup $S'$ such that $S \subseteq S' \subseteq G_{\geq \boldsymbol 0}$.

For every element $s \in S$ and every $n \geq 1$ we define the set 
$$ S_{n}(s)= \left\lbrace ns - \sum_{i=1}^{n-1} s_i \, | \, s_1, \ldots, s_{n-1} \in S, \, s_1, \ldots, s_{n-1} < s \right\rbrace. $$
We define $S'$ to be the semigroup generated by all the elements in the sets $S_{n}(s)$ for $s \in S$ and $n \geq 1$. Observing that $ S_{1}(s) = \lbrace s \rbrace $ and that if $t \in S_{n}(s)$, then $t \geq s$, we obtain that $S' \supseteq S$ and, if $S$ has a minimal nonzero element $g_1$, then $g_1$ is also the minimal nonzero element of $S'$.

\begin{lemma}
\label{technical}
Let $K[S]$ be a semigroup algebra over the field $K$ and let $\phi \in K[S] \setminus K$.
Fix a nonzero unit $u \in K$. Then for every $e \geq 1$, $$ (\phi+u)(\phi^2+u^2) \cdots (\phi^{2^{e-1}}+u^{2^{e-1}}) = \sum_{j=0}^{2^e-1} \phi^j u^{2^e-1-j}. $$ 
\end{lemma}
\begin{proof}
The proof is an easy induction on $e$.
\end{proof}



\begin{thm}
\label{numerical}
Let $K$ be a field of characteristic zero.
Let $D=K[S]$ be a semigroup algebra and suppose that $K[G_{\geq \boldsymbol 0}]$ is the integral closure of $D$.
Let $S'$ be defined as above. 
Then $R^*_S = K[S']_{\m_{S'}}$.
\end{thm}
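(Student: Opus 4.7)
The plan is to establish the two inclusions $R^*_S \subseteq K[S']_{\mathfrak{m}_{S'}}$ and $K[S']_{\mathfrak{m}_{S'}} \subseteq R^*_S$ separately. For the first, since $R^*_S$ is generated as a $K$-algebra by the elements $\sigma(1/F)$ for nonzero $F \in K[S]$, it suffices to show each such element lies in $K[S']_{\mathfrak{m}_{S'}}$. Writing $F = \sum_{i=1}^m u_i X^{s_i}$ with $s_1 < \cdots < s_m$ and multiplying numerator and denominator of $1/\sigma(F)$ by $X^{s_m}$ yields $\sigma(1/F) = X^{s_m}/q$, where $q = u_m + \sum_{i<m} u_i X^{s_m - s_i}$ has nonzero constant term $u_m$ but whose remaining exponents $s_m - s_i$ may not lie in $S'$. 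The strategy is to build an auxiliary element $p \in K[G_{\geq \boldsymbol{0}}]$ such that both $qp$ and $X^{s_m} p$ lie in $K[S']$ and $(qp)(0) \neq 0$, so that $\sigma(1/F) = (X^{s_m} p)/(q p)$ exhibits the desired membership. Applying Lemma \ref{technical} to the decomposition $q = u_m + \phi$ with $\phi = q - u_m$ gives
\[
q \cdot \prod_{k=1}^{e-1}\!\bigl(\phi^{2^k} + u_m^{2^k}\bigr) = \sum_{j=0}^{2^e - 1} \phi^j\, u_m^{2^e - 1 - j},
\]
whose right-hand side has constant term $u_m^{2^e - 1} \neq 0$. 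The integral closure hypothesis (every $g \in G_{\geq \boldsymbol{0}}$ satisfies $eg \in S$ for some $e \geq 1$) then lets us choose $e$ together with further geometric-series factors $1 + X^r + X^{2r} + \cdots$ large enough to pull every exponent appearing in the denominator into $S$ and every exponent appearing in the numerator into some $S_n(s_m) \subseteq S'$.

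For the second inclusion, since $R^*_S$ is local with maximal ideal $\mathfrak{M} \cap R^*_S$ (Remark \ref{remarksem}), any element of $K[S']$ not in $\mathfrak{m}_{S'}$ that happens to lie in $R^*_S$ is automatically a unit of $R^*_S$; the inclusion therefore reduces to $K[S'] \subseteq R^*_S$, hence to proving $X^t \in R^*_S$ for every $t$ in the generating set $\bigcup_{s\in S,\, n\geq 1} S_n(s)$ of $S'$. Given $t = ns - \sum_{i=1}^{n-1} s_i \in S_n(s)$, the basic element is $\sigma(1/(X^a - X^{a-r})) = X^a/(1 - X^r) \in R^*_S$ for $a, a-r \in S$ and $r > 0$, and subtracting two such instances sharing the same $r$ collapses telescopically to a finite polynomial:
\[
\frac{X^a}{1 - X^r} \;-\; \frac{X^{a+kr}}{1 - X^r} \;=\; X^a + X^{a+r} + \cdots + X^{a+(k-1)r}.
\]
I will choose $r = s - s_{n-1}$ (or a suitable multiple of it obtained via the integral closure hypothesis) and endpoints $a$, $a+kr$ in $S$ with $a - r,\, a+kr - r \in S$, arranged so that $t$ appears in the progression $\{a + jr\}_{j=0}^{k-1}$. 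Subtracting the monomials $X^{c}$ with $c \in S$ in the progression (which are trivially in $R^*_S$), and the monomials $X^{c'}$ with $c' \in S_{n'}(s)$ for some $n' < n$ (in $R^*_S$ by induction on $n$), isolates $X^t$. The base case $n = 1$ is immediate since $t = s \in S$.

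The main obstacle is ensuring these extractions can be arranged uniformly. In the first containment, one must choose the parameter $e$ and the further multiplicative factors carefully so that every exponent that appears is controlled, and characteristic zero is needed to invert the combinatorial integer coefficients arising from expanding Lemma \ref{technical}. In the second containment, when $t = n s - \sum s_i$ involves several distinct $s_i$'s, a single arithmetic progression typically will not isolate $X^t$ in one step; a more intricate induction that peels off the $s_i$'s one at a time and invokes Lemma \ref{technical} to generate auxiliary progressions (again using integral closure to ensure the endpoints land in $S$) will be required. Both steps depend essentially on the hypothesis that $K[G_{\geq \boldsymbol{0}}]$ is the integral closure of $D$, since that is what permits pulling arbitrary elements of $G_{\geq \boldsymbol{0}}$ into $S$ after a suitable multiplication.
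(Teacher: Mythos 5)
Your overall architecture (two inclusions, Lemma \ref{technical}, and the integral closure hypothesis used to force high powers of $\phi$ into $K[S]$) matches the paper's, but both halves have genuine gaps. In the first inclusion you make the denominator equal to $qp=\sum_{j=0}^{2^e-1}\phi^j u_m^{2^e-1-j}$. This indeed has nonzero constant term, but its nonconstant exponents are of the form $js_m-\sum_{l=1}^{j}s_{i_l}$ (already for $j=1$ these include $s_m-s_i$), and such exponents generally lie outside $S'$: for $S=\langle 4,7,9\rangle$ with $s_m=7$, $s_1=4$ one gets the exponent $3$, which is below the minimal nonzero element $4$ of $S'$. So $qp\notin K[S']$ and it is not a unit of $K[S']_{\m_{S'}}$; moreover multiplying further by truncated geometric series cannot help, since the offending minimal exponent survives any such product. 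You have the telescoping identity the wrong way around: the product $(\phi+u)(\phi^2+u^2)\cdots$ must sit in the \emph{numerator}, where $X^{s_m}\phi^j$ has exponents $(j+1)s_m-\sum_{l}s_{i_l}\in S_{j+1}(s_m)\subseteq S'$, while the denominator should collapse to $\phi^{2^e}-u_m^{2^e}$, whose single nonconstant part $\phi^{2^e}$ is pushed into $K[S]$ by the integral closure hypothesis. This is exactly what the paper does after normalizing $u_m=-1$.

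For the reverse inclusion, your telescoping of $X^a/(1-X^r)$ into arithmetic progressions is an appealing elementary device, and it does handle $t=ns-(n-1)s_1$ (a single repeated $s_i$), but as you yourself concede it does not treat $t=ns-\sum n_it_i$ with several distinct $t_i$, which is precisely where the difficulty lies; deferring to ``a more intricate induction'' is not a proof. The paper resolves this by a different mechanism: it forms $f=\sum_i v_iX^{t_i}-uX^{s}$, obtains $\sum_{j}u^{2^e-1-j}X^{s}\phi^{j}\in R^*_S$, and then varies $u$ and the coefficients $v_i$ over the infinite field $K$, solving Vandermonde-type linear systems to extract first each $X^{s}\phi^{j}$ and then each individual monomial in its support; characteristic zero guarantees (choosing the $v_i$ to be positive integers) that $X^{g}$ occurs with nonzero coefficient in $X^{s}\phi^{n-1}$. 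Without some such separation argument your sketch does not establish $K[S']\subseteq R^*_S$.
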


\begin{proof}
We already know by Lemma \ref{lemma1} that $R^*_S$ contains $K[S]_{\m_S}$. We first prove the containment $R^*_S \subseteq K[S']_{\m_{S'}}$.

For $f \in K[S] \setminus K$, write $$\sigma(\frac{1}{f})= \frac{X^s}{\sum_{i=1}^m u_i X^{s-s_i}}$$ as in equation (\ref{eqsigma}). If $m=1$, we clearly have $f=X^s \in K[S']$.
Thus, we can assume without loss of generality $m \geq 2$, $s_1 < \ldots < s_m$, $u_m=-1$ and $s=s_m$. 

We have then $s_m - s_i \geq \boldsymbol 0$ and,
if $s_m - s_i \in S$ for every $i$, we get $\sigma(\frac{1}{f}) \in K[S]_{\m_S} \subseteq K[S']_{\m_{S'}}$. 
Otherwise write the denominator of $ \sigma(\frac{1}{f}) $ as $\phi - 1$ where $\phi = \sum_{i=1}^{m-1} u_i X^{s-s_i}$.
Hence, for every $e \geq 1$ we can write $$ \sigma\left( \frac{1}{f} \right) = \frac{X^s (\phi+1)(\phi^2+1) \cdots (\phi^{2^{e-1}}+1)}{(\phi^{2^{e}}-1)}. $$
By assumption on $ S$ and $G_{\geq \boldsymbol 0}$ we can find $e$ large enough such that $\phi^{2^{e}} \in K[S]$ and therefore $\phi^{2^{e}}-1$ is a unit in $K[S]_{\m_S} \subseteq K[S']_{\m_{S'}}$. 

To show that $\sigma(\frac{1}{f}) \in K[S']_{\m_{S'}}$ we need to show that $\psi= X^s (\phi+1)(\phi^2+1) \cdots (\phi^{2^{e-1}}+1) \in K[S']_{\m_{S'}}$. By Lemma \ref{technical}, $\psi = X^s \sum_{j =0}^{2^e-1} \phi^j$. Using the definition of $\phi$, we can express $\psi$ as a linear combination over $K$ of elements of the form $X^{ns - \sum_{j=1}^{n-1} t_j}$ with $t_j \in \lbrace s_1, \ldots, s_{m-1} \rbrace$ and $n \leq e$. Such elements are all in $K[S']$ by definition of $S'$. This proves the inclusion $R^*_S \subseteq K[S']_{\m_{S'}}$ and also that the maximal ideal of $R^*_S$ is contained in the maximal ideal of $K[S']_{\m_{S'}}$.

 To conclude it suffices to show that $K[S'] \subseteq R^*_S$. Thus suppose $S \neq S'$ and
 pick $g \in S' \setminus S$. Write $g = ns - \sum_{i=1}^{n-1} s_i$ for $s, s_i \in S$, $s_i \leq s$. These elements $s_i$ may not be all distinct, thus we can rename them and write $g = ns - \sum_{i=1}^{k} n_it_i$ with $t_i = s_j$ for some $j$, $n_i \in \N$, $\sum_{i=1}^{k} n_i = n-1 \geq 1$, and the $t_i$'s are all distinct. For some $v_1, \ldots, v_k, u \in K \setminus \lbrace 0 \rbrace$
 set $f= \sum_{i=1}^k v_i X^{t_i} - u X^{s}$ and $\phi = \sum_{i=1}^k v_i X^{s - t_i}$. Observe that, using Lemma \ref{technical} as before, for every $e \geq 1$ we have
 $$ \sigma \left( \frac{1}{f} \right) = \frac{X^s}{\phi-u} =\frac{X^s (\phi+u)(\phi^2+u^2) \cdots (\phi^{2^{e-1}}+u^{2^{e-1}})}{(\phi^{2^{e}}-u^{2^e})}= \frac{X^s \sum_{j =0}^{2^e-1} u^{2^e-1-j} \phi^j}{(\phi^{2^{e}}-u^{2^e})}. $$ We can choose $e$ large enough such that $2^e \geq n$ and $\phi^{2^{e}} \in K[S]$ (we can choose such $e$ in an uniform way, independently on the choice of coefficients $v_1, \ldots, v_k$). Under this assumption we clearly have that $ \phi^{2^{e}}-1 $ is a unit in $K[S]_{\m_S} \subseteq R^*_S$. Hence, we can clear the denominator to get the element $$\psi=  \sum_{j =0}^{2^e-1} u^{2^e-1-j} (X^s \phi^j) \in R^*_S.$$
 Since this procedure can be applied for arbitrary choices of $u$ and $K$ is infinite, solving a linear system over $K$ by a Vandermonde determinant argument shows that $X^s\phi^j \in R^*_S$ for every $j=1, \ldots, 2^e-1$. Observe that by construction all the $a \in G$ that are in the support of $X^s\phi^j$ are elements of $S'$.
 
 But now we can change the coefficients $v_1, \ldots, v_k$ in the field $K$, defining $k$ new elements 
 $\phi_r = \sum_{i=1}^k v_{i,r} X^{s - t_i}$ for $r=1, \ldots, k$.
 Then we can apply the same argument as above to obtain $X^s\phi_r^j \in R^*_S$ for every $j=1, \ldots, 2^e-1$ and $r=1, \ldots, k$.
 Notice that of course each $\phi_r$ has the same support of $ \phi$ in $S'$. 
 
 The choice of coefficients $v_{i,r}$ in the infinite field $K$ can be made generic enough that, solving another linear system over $K$ we get $X^a \in R^*_S$ for every $a$ in the support of $X^s\phi^j$, for every $j=1, \ldots, 2^e-1$.  
 
 Finally, it is easy to show that $g$ is in the support of $X^s \phi^{n-1}$ (since $K$ has characteristic zero, one can choose the coefficients $v_1, \ldots, v_k$ to be positive integers to avoid cancellations). By our assumption on $e$, we have $n-1 \leq 2^e-1 $ and therefore $X^g \in R^*_S$.
\end{proof}

  
  \begin{example}
\label{examplesnumerical}
Let $S$ be a numerical semigroup generated by positive integers $g_1, \ldots, g_t$ such that $g_1 < g_2 < \ldots < g_t$. If every $g \geq g_2$ is in $S$, then $S = S'$ and therefore $R^*_S = K[S]_{\m_S} $ exactly as in the case $S = \N$. Instead, if for instance $S$ is generated by $4,7,9$ we have that $ 10 = 2\cdot7 -4 \in S' \setminus S$. In this case $S'$ is generated by $4,7,9,10$.
\end{example}

The next topic we consider is the computation of the Krull dimension of $R_S$. It is well-know that the dimension of $K[S]$ is $N$, where $N$ is the rank of the group $G$ (see \cite[Section 21]{gilmersem}). Furthermore, it is also known that semigroup algebras over a field are Jaffard domain (see \cite[Corollary 1.18]{jaffard}). This means that the Krull dimension of every overring of $R_S$ is at most $N$. Applying the map $\sigma$ we can identify $R_S$ with an overring of $K[S]$ and obtain for free that the dimension of $R_S$ is at most $N$. This shows that semigroup algebras are another class of integral domain satisfying Conjecture \ref{conj1}.

To compute the dimension more precisely we introduce a family of subsemigroups of $S$.   Using that $G $ has rank $N$, we can write every element of $G$ as $(g_1, \ldots, g_N)$ with each $g_i$ in the projection of $G$ on the $i$-th component. We can choose a total order on the direct summands of $G$ and see the total order of $G$ as a lexicographic order on its components. We say that $ (g_1, \ldots, g_N) < (\gamma_1, \ldots, \gamma_N)$ if $g_j= \gamma_j $ for $j \leq i$ and $g_i < \gamma_i$.

For $i=1, \ldots, N$ we set 
$$ G_i = \lbrace (g_1, \ldots, g_N) \, |  \, g_j = 0 \mbox{ for } j < i \mbox{ and } g_i > 0 \rbrace \subseteq G_{\geq \boldsymbol 0}. $$
  Set then $S_i = G_i \cap S$ and $S_0 = \lbrace \boldsymbol 0 \rbrace$. 
  Observe that for every $j,k$, $S_j + S_k \subseteq S_{\tiny \min \lbrace j,k \rbrace}$.
  
  Set 
  $\Sigma_i = \lbrace X^s \, |  \, s \in \bigcup_{j=i}^N S_j \rbrace $. Then 
  $\Sigma_i$ is a multiplicatively closed set of $D$ 
  and $K[\Sigma_i]$ is a semigroup algebra, with associated monoid $S_0 \cup \Sigma_i$.
  We prove the following:

\begin{thm}
\label{dimth1}
Let $D=K[S]$ be a semigroup algebra of Krull dimension $N$ and suppose that all the sets $S_i= G_i \cap S$ are non-empty. Then $\dim (R(D)) = N$.
\end{thm}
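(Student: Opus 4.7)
The upper bound $\dim(R(D)) \leq N$ has just been established via the Jaffard property. To match it, the plan is to construct an explicit chain of $N+1$ distinct prime ideals in $R(D)$ by transferring a prime chain from a valuation overring. By Remark \ref{remarksem}, the isomorphic image $R^*_S = \sigma(R(D))$ sits inside $V := K[G_{\geq \boldsymbol{0}}]_{\mathfrak{M}}$, which by Theorem \ref{totalordervaluation} is a valuation ring with value group $G$ of rank $N$ and Krull dimension $N$.

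Since $G$ is lex-ordered, its isolated (convex) subgroups form the chain $H_0 = G \supsetneq H_1 \supsetneq \ldots \supsetneq H_N = 0$, where $H_i = \{g \in G : g_j = 0 \text{ for all } j \leq i\}$. Standard valuation theory provides an order-reversing bijection between these and the primes of $V$, yielding a chain $0 = P_0 \subsetneq P_1 \subsetneq \ldots \subsetneq P_N = \mathfrak{M}$; concretely, $P_i$ consists of $0$ together with those elements whose valuation has first nonzero coordinate at some position $\leq i$. Setting $Q_i := P_i \cap R^*_S$ then produces a candidate chain of primes in $R^*_S$, whose endpoints are automatic: $Q_0 = (0)$, and by Remark \ref{remarksem} the contraction $Q_N$ is the maximal ideal of $R^*_S$.

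The crux is to show that consecutive terms $Q_{i-1} \subsetneq Q_i$ are distinct. For each $i = 1, \ldots, N$, the hypothesis $S_i \neq \emptyset$ lets me pick $\alpha_i \in S_i \subseteq G_i$; then $X^{\alpha_i} = \sigma(1/X^{\alpha_i}) \in R^*_S$, and since $\alpha_i \in G_i$ its valuation has first nonzero coordinate precisely at position $i$. Hence $X^{\alpha_i}$ lies in $P_i \setminus P_{i-1}$, so in $Q_i \setminus Q_{i-1}$. This produces a strict chain of length $N$ in $R^*_S$, and combined with the upper bound gives $\dim(R(D)) = N$. The step requiring the most care is the prime/convex-subgroup correspondence for $V$ in the lex-ordered setting, but this is standard valuation theory; the remainder is a routine contraction argument that leverages the non-emptiness of each $S_i$.
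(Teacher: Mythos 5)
Your proof is correct, but it takes a genuinely different route from the paper's. The paper produces the required chain of primes from \emph{below}: it forms the increasing chain of rings $R_S \subsetneq R_S[\Sigma_N] \subseteq \cdots \subseteq R_S[\Sigma_1]$, uses Lemma \ref{localizations} and Theorem \ref{islocal!!} to recognize each $R_S[\Sigma_i]$ as a localization $(R_S)_{\q_i}$, and then proves strictness of the chain by establishing the identification $R_S[\Sigma_i]=R\bigl(K(\Sigma_i)\bigl[\bigcup_{j<i}S_j\bigr]\bigr)$ (equation (\ref{claim})) and observing that $X^g$ is not Egyptian there for $g\in S_{i-1}$. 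You instead work from \emph{above}: you contract the canonical prime chain of the valuation overring $V=K[G_{\geq \boldsymbol 0}]_{\mathfrak M}$ (available by Theorem \ref{totalordervaluation} and Remark \ref{remarksem}) down to $R^*_S$, and you separate consecutive contractions with the monomials $X^{\alpha_i}$ for $\alpha_i\in S_i$, which visibly lie in $R^*_S$ and whose values sit in $H_{i-1}\setminus H_i$. This is sound: the $H_i$ are exactly the convex subgroups of the lex-ordered $G$, the prime/convex-subgroup correspondence for $V$ is standard, and the hypothesis $S_i\neq\emptyset$ enters in the same essential way as in the paper, namely to produce a witness at level $i$. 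Your route is shorter and avoids the somewhat delicate claim (\ref{claim}) entirely; what it does not recover is the extra structural payoff of the paper's argument, which exhibits each localization $(R_S)_{\q_i}$ explicitly as the reciprocal complement of a smaller semigroup algebra (and in particular identifies $\q_1=(0)$ via an Egyptianity statement). Both proofs rely on the same Jaffard upper bound $\dim(R_S)\leq N$ established just before the theorem.
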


\begin{proof}
From the fact that $D$ is a Jaffard domain and $R_S$ is isomorphic to an overring of $D$ we have $\dim (R_S) \leq N$.
Also recall that $D$ is not Egyptian and therefore $\dim (R_S) \geq 1$. If $N=1$, the result follows immediately. Suppose then $N > 1$ and 
consider the chain of overrings
$$ R_S \subsetneq R_S[\Sigma_N] \subseteq R_S[\Sigma_{N-1}] \subseteq \ldots \subseteq R_S[\Sigma_{2}] \subseteq R_S[\Sigma_{1}]. $$ By Lemma \ref{localizations} and Theorem \ref{islocal!!}, $R_S[\Sigma_i]= R(\Sigma_i^{-1}K[S])$ is a local domain and it is obtained from $R_S$ inverting a set of elements. Hence, there exists a prime ideal $\q_i$ such that $R_S[\Sigma_i] = (R_S)_{\q_i}$. 

We want to prove that for every $i$, the containment $R_S[\Sigma_i] \subseteq R_S[\Sigma_{i-1}]$ is strict. For this we claim that 
\begin{equation}
\label{claim}
R_S[\Sigma_i]= R(\Sigma_i^{-1}K[S]) = R \left(K(\Sigma_i)\left[ \bigcup_{j<i}S_j \right] \right),
\end{equation}
where $K(\Sigma_i)$ denotes the quotient field of $K[\Sigma_i]$ and $\bigcup_{0 \leq j<i}S_j$ is clearly a monoid. 

The inclusion $R(\Sigma_i^{-1}K[S]) \subseteq R(K(\Sigma_i)[\bigcup_{j<i}S_j])$ follows from the inclusion $\Sigma_i^{-1}K[S] \subseteq K(\Sigma_i)[\bigcup_{j<i}S_j]$. For the opposite inclusion we use the following argument: 
the ring $K[\Sigma_i, \Sigma_i^{-1}]$ is a $K$-algebra generated by Egyptian elements, therefore it is Egyptian and $$K(\Sigma_i) = R(K[\Sigma_i, \Sigma_i^{-1}]) \subseteq R(\Sigma_i^{-1}K[S]).$$ An element of $K(\Sigma_i)[\bigcup_{j<i}S_j]$ can be written as $\frac{f}{\epsilon}$ with $f \in K[S]$ and $\epsilon \in K(\Sigma_i)$. This implies that its reciprocal is in $R(\Sigma_i^{-1}K[S])$.

Recall now that by hypothesis the set $S_{i-1}$ is not empty.
By Remark \ref{remarksem} and by the equality (\ref{claim}), it follows that every element $X^g$ for $g \in S_{i-1}$ is not Egyptian in the semigroup algebra $K(\Sigma_i)[\bigcup_{j<i}S_j]$. Hence, $X^g \not \in R_S[\Sigma_i]$. This shows $R_S[\Sigma_i] \subsetneq R_S[\Sigma_{i-1}]$.

 Furthermore, notice that $(R_S)_{\q_1}= R_S[\Sigma_1]= R(\Sigma_1^{-1}K[S]) = R(K[X^s, X^{-s}, \, s \in S])$ is equal to the quotient field of $D$ since $K[X^s, X^{-s}, \, s \in S]$ is Egyptian.
 
 To conclude, call $\m$ the maximal ideal of $R_S$.
From what proved above we have a chain of prime ideals
$$ \m \supsetneq \q_{N} \supsetneq \q_{N-1} \supsetneq \ldots \supsetneq \q_2 \supsetneq \q_1 = (0). $$ This proves that $R_S$ has dimension $N$.
\end{proof}

\begin{rem}
\label{remdim}
From the fact that $D=K[S]$ is a Jaffard domain and from the proof of Theorem \ref{dimth1}, it follows that $N-t \leq \dim (R(D)) \leq N $ where $t $ is the number of sets $S_i$ that are empty.
\end{rem}

Certain cases where some of the sets $S_i$ are empty will be treated in the next section using $D+\m$ constructions.

 
 \section{Reciprocal complements of $D+\m$ constructions}
 
In this section we describe the reciprocal complements of certain $D+\m$ constructions in order to construct examples of integral domains $D$ such that the difference $\dim(D)-\dim(R(D))$ is arbitrarily large.
 
 Given an integral domain $D$ and an ideal $I \subseteq D$, we denote by $R(I)$ the ideal of $R(D)$ generated by the elements of the form $\frac{1}{x}$ for $x \in I$.
 
 If $\p$ is a prime ideal of $D$, the ideal $R(\p)$ is not necessarily a prime ideal. For instance $(X)$ is a prime ideal of $D=K[X,Y]$ but the ideal generated by $ \frac{1}{X}$ in $R(K[X,Y])$ is not prime since 
 $$ \frac{1}{Y}\left[ \frac{1}{X} - \frac{1}{X+Y} \right]=  \frac{1}{Y}  \frac{Y}{X(X+Y)} = \frac{1}{X}\frac{1}{X+Y}, $$ and both $\frac{1}{Y}$ and $ \frac{1}{X+Y} $ are not in the ideal generated by $\frac{1}{X}$ (to see this observe that $ \frac{X}{Y} \not \in R(D) $ by \cite[Example 2.9]{Eps3} and, by a linear change of coordinates in $D$, also $ \frac{X}{X+Y} \not \in R(D) $).
 However $R(\p)$ is prime in the following situation:
 
\begin{lemma}
\label{rpprimo}
Let $D$ be an integral domain of the form $D = \mathcal{E} + \p$, where $\mathcal{E}$ is the subring of $D$ consisting of all the Egyptian elements of $D$ together with zero and $\p$ is a prime ideal such that $\p \cap \mathcal{E} = (0)$. Then $R(\p)$ is the maximal ideal of $R(D)$ and 
$$ R(D)=\mathcal{Q}(\mathcal{E})+R(\p), $$ where $ \mathcal{Q}(\mathcal{E}) $ denotes the quotient field of $\mathcal{E}$.
\end{lemma}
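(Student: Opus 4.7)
The plan is to establish both assertions simultaneously by proving the single key fact that $\frac{1}{e+p} \in R(\p)$ for every $e \in \mathcal{E}$ and every $p \in \p \setminus \lbrace 0 \rbrace$. Granted this, every generator $\frac{1}{d}$ of $R(D)$ (with $d \neq 0$) falls into $\mathcal{Q}(\mathcal{E}) + R(\p)$: write $d = e + p$ uniquely, which is possible because $\mathcal{E} \cap \p = (0)$; if $p = 0$, then $\frac{1}{d} = \frac{1}{e} \in \mathcal{Q}(\mathcal{E})$, and if $p \neq 0$, the key fact applies. Since every element of $R(D)$ is a finite sum of such reciprocals and $\mathcal{Q}(\mathcal{E}) + R(\p)$ is closed under addition, this gives $R(D) \subseteq \mathcal{Q}(\mathcal{E}) + R(\p)$; the reverse inclusion follows because Egyptian elements and their reciprocals all lie in $R(D)$, hence $\mathcal{Q}(\mathcal{E}) \subseteq R(D)$, while $R(\p) \subseteq R(D)$ is immediate. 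For the statement that $R(\p)$ is the maximal ideal, Theorem \ref{islocal!!} says the maximal ideal of $R(D)$ is generated by the reciprocals of the non-Egyptian elements of $D$; since $\mathcal{E} \cap \p = (0)$, these are exactly the elements of the form $e+p$ with $p \neq 0$, and their reciprocals lie in $R(\p)$ by the key fact. The reverse containment is immediate, since each $p \in \p \setminus \lbrace 0 \rbrace$ is itself non-Egyptian.

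To prove the key fact, I would invoke the identity $\frac{1}{xy} = \frac{1}{x+y}\bigl(\frac{1}{x} + \frac{1}{y}\bigr)$ already exploited in Proposition \ref{primeideals}(2). Applying it with $x = e$ and $y = p$ and rearranging yields
\[
\frac{1}{e(e+p)} = \frac{1}{ep} - \frac{1}{p(e+p)},
\]
in which both terms on the right are generators of $R(\p)$ because $ep, p(e+p) \in \p \setminus \lbrace 0 \rbrace$. Multiplying both sides by $e$, which is permissible since $e$ is Egyptian and hence lies in $R(D)$, produces the cleaner identity
\[
\frac{1}{e+p} = \frac{1}{p} - e \cdot \frac{1}{p(e+p)}.
\]
The first summand is a generator of $R(\p)$, and the second is the product of the ring element $e \in R(D)$ with the generator $\frac{1}{p(e+p)} \in R(\p)$; since $R(\p)$ is an ideal of $R(D)$, the second summand sits in $R(\p)$, proving the key fact.

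The main obstacle is locating this decomposition: although $e+p$ does not itself lie in $\p$ (indeed $\p$ is prime and $e \notin \p$, so $e+p \notin \p$), its reciprocal must nevertheless be expressed as an $R(D)$-linear combination of reciprocals of elements of $\p$. The trick is to detour through the product $e(e+p)$, whose reciprocal can be split into genuine $R(\p)$-generators via the identity above; the subsequent multiplication by $e$ then isolates $\frac{1}{e+p}$. A quick sanity check handles the nonvanishing requirements throughout: $e+p = 0$ would force $e = -p \in \mathcal{E} \cap \p = (0)$, contradicting $e \neq 0$, while the degenerate case $e = 0$ is trivial since then $\frac{1}{p} \in R(\p)$ directly.
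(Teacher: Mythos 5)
Your proof is correct, but the mechanism you use for the crucial step --- showing that $\frac{1}{e+p}\in R(\p)$ when both $e$ and $p$ are nonzero --- is genuinely different from the paper's. The paper factors $\frac{1}{e+p}=\frac{1}{p}\cdot\frac{1}{e}\cdot\left(\frac{1}{p}+\frac{1}{e}\right)^{-1}$ and must then justify that $\frac{1}{p}+\frac{1}{e}$ is a unit of $R(D)$; it does so by first localizing at the Egyptian elements via Theorem \ref{reductiontoKalgebra}, so that $\frac{1}{e}$ becomes a nonzero element of the field $\mathcal{Q}(\mathcal{E})$, and then invoking the description of the maximal ideal from Theorem \ref{islocal!!}. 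You instead write $\frac{1}{e+p}=e\cdot\frac{1}{e(e+p)}$ and resolve $\frac{1}{e(e+p)}=\frac{1}{ep}-\frac{1}{p(e+p)}$ into honest generators of $R(\p)$, using only that $ep$ and $p(e+p)$ lie in $\p$ because $\p$ is an ideal, that $e\in R(D)$ because $e$ is Egyptian, and that $R(\p)$ is an ideal of $R(D)$. Your route is more elementary and self-contained at this point, since it avoids the unit computation and the detour through the localization entirely; the paper's route has the mild advantage of exhibiting $\frac{1}{e+p}$ directly as a unit multiple of $\frac{1}{ep}$, which is occasionally useful elsewhere. Both arguments conclude identically, using Theorem \ref{islocal!!} to identify $R(\p)$ with the maximal ideal and decomposing an arbitrary reciprocal according to the direct sum $D=\mathcal{E}+\p$ with $\mathcal{E}\cap\p=(0)$; your handling of the degenerate cases ($e=0$, $p=0$, $e+p=0$) is complete.
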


\begin{proof}
The ideal $R(\p)$ is a proper ideal since by assumptions the elements of $\p$ are not Egyptian. Pick $x \in D \setminus \p$. Then $x = e+z$ with $e $ an Egyptian element and  $z \in \p$. If $z = 0$, clearly $\frac{1}{e}$ is a unit in $R(D)$. If $z \neq 0$ we observe that by Theorem \ref{reductiontoKalgebra}, $R(D)= R(E^{-1}D)$ where $E = \mathcal{E} \setminus \lbrace 0 \rbrace$. Localizing to the ring $E^{-1}D$ we can use Theorem \ref{islocal!!} 
to show that $ \frac{ 1}{e} + \frac{1}{z}$ is a unit in $R(D)$. Since $\frac{1}{z} \in R(\p)$, it follows that
$$ \frac{1}{x}= \frac{1}{e+z} = \frac{1}{z} \frac{1}{e} \frac{1}{\frac{1}{z} + \frac{1}{e}} \in R(\p). $$ 
Hence, $R(\p)$ is the maximal ideal of $R(D)$.

Since taking rings of reciprocals preserves inclusion, we clearly have $ \mathcal{Q}(\mathcal{E}) \subseteq R(D) $ and therefore $ \mathcal{Q}(\mathcal{E})+R(\p) \subseteq R(D) $. Every element of $R(D)$ is a sum of reciprocals of elements of $D$. By what proved above, all the reciprocals of elements of $D$ are either in $ \mathcal{Q}(\mathcal{E}) $ or in $R(\p)$. It follows that $ R(D)=\mathcal{Q}(\mathcal{E})+R(\p). $
\end{proof}

Of course the above result holds in the case where $\mathcal{E}$  is a field and $\p$ is a maximal ideal of $D$. We prove 
now a more general theorem describing the reciprocal complements of certain $D+\m$ and $K+\m$ constructions. For a detailed background on this kind of ring constructions we refer to \cite{gilmer}.

\begin{thm}
\label{d+m}
Let $T$ be an integral domain 
and let $\mathcal{E}$ be the subring of the Egyptian elements of $T$ together with zero. Let $\p$ be a prime ideal of $T$ such that $\mathcal{E} \cap \p = (0)$. 
Let $B$ be a subring of $\mathcal{E}$ and set $D = B + \p$. Then 
$$ R(D)= R(B)+R(\p) $$ 
and $R(\p)$ is a prime ideal of $R(D)$.
\end{thm}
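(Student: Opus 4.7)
The plan is to prove the decomposition $R(D) = R(B) + R(\p)$ by reducing to Lemma \ref{rpprimo} applied to a suitable localization, and then to deduce that $R(\p)$ is prime from the resulting quotient structure. Since $B, \p \subseteq D$, both $R(B)$ and $R(\p)$ sit inside $R(D)$, and because $R(\p)$ is an ideal of $R(D)$ absorbing multiplication by $R(B) \subseteq R(D)$, the sum $R(B)+R(\p)$ is a subring of $R(D)$, yielding the inclusion $R(B) + R(\p) \subseteq R(D)$ for free. For the converse, every $d \in D$ decomposes uniquely as $b + z$ with $b \in B$ and $z \in \p$ (using $B \cap \p = 0$), and the cases $z=0$ or $b=0$ are immediate. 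The nontrivial case is $b, z \neq 0$, for which the plan is to establish the stronger statement $\frac{1}{b+z} \in R(\p)$.

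The key reduction is to localize by $S = B \setminus \{0\}$. By Lemma \ref{localizations}, $R(S^{-1}D) = R(D)[B]$, and $S^{-1}D$ takes the form $\mathcal{Q}(B) + S^{-1}\p$, with $\mathcal{Q}(B) \cap S^{-1}\p = 0$ following from $B \cap \p = 0$. The main technical point is to verify $\mathcal{E}(S^{-1}D) = \mathcal{Q}(B)$. One inclusion is clear since $\mathcal{Q}(B)$ is a field. For the other, suppose $y = k + p/b$ is Egyptian in $S^{-1}D$ with $p \in \p \setminus \{0\}$. Then $y - k = p/b$ lies in $R(D)[B]$, so multiplying by $b$ writes $p$ as a finite $R(D)$-linear combination of elements of $B$; expanding each $R(D)$-coefficient as a sum of reciprocals of elements of $D$ and each $B$-element as a sum of reciprocals of elements of $T$ (using $B \subseteq \mathcal{E}(T)$) exhibits $p$ as an element of $\mathcal{E}(T) \cap \p = (0)$, a contradiction. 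Hence Lemma \ref{rpprimo} applies to $S^{-1}D$ and yields $R(S^{-1}D) = \mathcal{Q}(B) + R(S^{-1}\p)$, with $R(S^{-1}\p)$ the maximal ideal.

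Because $b+z \notin \mathcal{Q}(B) = \mathcal{E}(S^{-1}D)$, Theorem \ref{islocal!!} places $\frac{1}{b+z}$ in the maximal ideal $R(S^{-1}\p)$, and unpacking this membership yields an expression $\frac{1}{b+z} = \sum_i \rho_i \beta_i / p_i$ with $\rho_i \in R(D)$, $\beta_i \in B$, and $p_i \in \p$. The observation that transports this back inside $R(\p) \subseteq R(D)$ is: $\beta/p \in R(\p)$ whenever $\beta \in B$ and $p \in \p$. Indeed, using $\beta \in \mathcal{E}(T)$, write $\beta = \sum_k 1/t_k$ with $t_k \in T$; then $\beta/p = \sum_k 1/(t_k p)$, and each $t_k p$ lies in the $T$-ideal $\p \subseteq D$. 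Since $R(\p)$ is an ideal of $R(D)$, multiplying by $\rho_i \in R(D)$ preserves the inclusion, giving $\frac{1}{b+z} \in R(\p)$ and completing the proof of the decomposition.

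For primality of $R(\p)$, the plan is to show $R(B) \cap R(\p) = 0$ in $R(D)$, which together with the surjection $R(B) \twoheadrightarrow R(D)/R(\p)$ coming from the decomposition yields $R(D)/R(\p) \cong R(B)$, a domain. If $\alpha \in R(B) \cap R(\p)$, then inside the ambient decomposition $R(S^{-1}D) = \mathcal{Q}(B) + R(S^{-1}\p)$, $\alpha$ lies simultaneously in $\mathcal{Q}(B)$ (as $R(B) \subseteq \mathcal{Q}(B)$) and in $R(S^{-1}\p)$ (as $R(\p) \subseteq R(S^{-1}\p)$); but $\mathcal{Q}(B)$ consists of units of $R(S^{-1}D)$ while $R(S^{-1}\p)$ is its maximal ideal, forcing $\alpha = 0$. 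The principal obstacle throughout is the verification $\mathcal{E}(S^{-1}D) = \mathcal{Q}(B)$, the step where the hypotheses $B \subseteq \mathcal{E}(T)$ and $\mathcal{E}(T) \cap \p = 0$ must be combined with the fact that $\p$ is a $T$-ideal rather than merely a $D$-ideal.
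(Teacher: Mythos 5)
Your argument is correct, but it reaches the decomposition by a more roundabout route than the paper. The paper handles the crucial element $\frac{1}{b+z}$ (with $b\in B\setminus\{0\}$, $z\in\p\setminus\{0\}$) by a direct computation inside $R(T)$: since $b$ is Egyptian in $T$ while $z$ is not (as $\mathcal{E}\cap\p=(0)$), the element $\frac{1}{z}+\frac{1}{b}$ is a unit of the local ring $R(T)$, so the identity $\frac{1}{b+z}=\frac{1}{z}\cdot\frac{1}{b}\cdot\bigl(\frac{1}{z}+\frac{1}{b}\bigr)^{-1}$ places $\frac{1}{b+z}$ in $\bigl(\frac{1}{z}\bigr)R(T)$, which lands in $R(\p)$ by exactly the observation you also use at the end: a reciprocal of an element of $T$ times $\frac{1}{z}$ is a reciprocal of an element of the $T$-ideal $\p$. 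You instead localize $D$ at $S=B\setminus\{0\}$, verify that the Egyptian elements of $S^{-1}D$ are precisely the nonzero elements of $\mathcal{Q}(B)$, invoke Lemma \ref{rpprimo} for $S^{-1}D$ as a black box, and transport the membership $\frac{1}{b+z}\in R(S^{-1}\p)$ back into $R(\p)$ with the same $T$-ideal trick. Both proofs ultimately rest on the same two facts, namely that $B$ consists of $T$-Egyptian elements and that $\p$ is an ideal of all of $T$; your detour costs the extra verification $\mathcal{E}(S^{-1}D)=\mathcal{Q}(B)\setminus\{0\}$ (which you carry out correctly) and, strictly speaking, a check that $S^{-1}\p$ is a prime ideal of $S^{-1}D$ before Lemma \ref{rpprimo} applies (routine, since $D/\p\cong B$ and hence $S^{-1}D/S^{-1}\p\cong\mathcal{Q}(B)$). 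What your route buys is a cleaner treatment of the primality claim: the paper simply asserts $R(B)\cap R(\p)=(0)$, whereas your comparison of the units of $R(S^{-1}D)$ coming from $\mathcal{Q}(B)$ against its maximal ideal $R(S^{-1}\p)$ gives an honest proof of that intersection being trivial, together with the explicit identification $R(D)/R(\p)\cong R(B)$.
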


\begin{proof}
The containment $R(B)+R(\p) \subseteq R(D) $ follows immediately from the definitions. Let then $x = b+z \in D$ where $b \in B$ and $z \in \p$. We need to show that $ \frac{1}{x} \in R(B)+R(\p) $. 

If either $b$ or $z$ is zero this is obvious. 
Thus assume both elements to be nonzero, observe that $b$ is Egyptian in $T$ and argue as in the proof of Lemma \ref{rpprimo} to obtain $$ \frac{1}{x} = \frac{1}{b+z} = \frac{1}{z} \frac{1}{b} \frac{1}{\frac{1}{z} + \frac{1}{b}} \in \left( \frac{1}{z} \right) R(T) \subseteq R(\p). $$
This proves the inclusion $R(D) \subseteq R(B)+R(\p) $.
Finally, since $R(B)$ is an integral domain and $R(B) \cap R(\p)= \lbrace 0 \rbrace$, $R(\p)$ is a prime ideal of $R(D)$.
\end{proof}

We apply Theorem \ref{d+m} to the following examples of semigroups rings. In this context we prefer to express the semigroup algebras with a notation more similar to the case of polynomial rings.

\begin{example}
\label{ex1}
Let $X,Y,Z$ be indeterminates over a field $K$.
\begin{enumerate}
\item Let $D= K[YX^k, k \in \Z]$. As a semigroup ring we can see $D$ as $K[S]$ where $S$ is the submonoid of $\Z \times \Z$ consisting of $(0,0) $ and all the elements $(a,b)$ with $a >0$, $b \in \Z$. According to the notation introduced in the previous section, the set $S_2$ is empty in this case. Let $\p$ be the maximal ideal of $D$ generated by all the elements $YX^k$ for $k \in \Z$. Set $$T = K[X, X^{-1}, Y] = K[X, X^{-1}] + \p.$$ The element $Y$ is not Egyptian in $T$ since it is not Egyptian in the overring $K(X)[Y]$. Hence, all the elements of $\p$ are not Egyptian (factors of Egyptian elements are Egyptian). 
Recall that by \cite[Example 2.14]{Eps3} $R(K[X])= K[X^{-1}]_{(X^{-1})}$.

By Corollary \ref{cor2loc} (or by \cite[Proposition 7]{GLO}) we obtain that $K[X, X^{-1}]$ is an Egyptian domain, and therefore it is the subring of the Egyptian elements of $T$. 
We can first apply Lemma \ref{rpprimo} to $T$ to get $ R(T)= K(X) + R(\p) $. 
On the other hand, by
Theorem \ref{reductiontoKalgebra} and \cite[Example 2.14]{Eps3}, we get $$R(T)= R(K(X)[Y]) = K(X)[Y^{-1}]_{(Y^{-1})}.$$
It follows that $\p = Y^{-1}K(X)[Y^{-1}]_{(Y^{-1})}.$

Applying now Theorem \ref{d+m} choosing $B=K$,
we get
$$ R(D) = R(K)+R(\p) = K + Y^{-1} K(X)[Y^{-1}]_{(Y^{-1})}. $$ 
Applying the automorphism $\sigma$ of $K(X,Y)$ defined in the previous section (and originally in \cite{EGL}), we get $R(D) \cong K + YK(X)[Y]_{(Y)}$, which is one of the most classical examples of $K + \m$ constructions. It is well-know that in this case $\dim(R(D))=1$, while $\dim(D)=2$. This example is generalized to arbitrary dimension in Theorem \ref{dimth2}.
\item Let $D= K[YX^k, k \geq 0] = K + \p$, where $\p = (Y)K[X,Y]$. Set $T= K[X]+\p = K[X,Y]$. 
By Theorem \ref{d+m}, applied choosing $B=K$, we get $$R(D) = R(K)+R(\p)= K+ Y^{-1}R(K[X,Y]).$$
Notice that in this case $R(\p)$ is the maximal ideal of $R(D)$, but is not a prime ideal of $R(T)$.
\item Let $D= K[X, \frac{Y}{X^k}, \frac{Z}{X^k}, k \geq 0]$. In this case let $\p $ be the ideal generated by $\frac{Y}{X^k}, \frac{Z}{X^k}$. for $k \geq 0$. We apply Theorem \ref{d+m} using $T= D[X^{-1}]= K[X,X^{-1}] + \p$ and $B=K[X]$. This gives 
$$ R(D) = R(K[X]+\p) = R(K[X])+R(\p) = K[X^{-1}]_{(X^{-1})}+ \mathcal{M}, $$ where $\mathcal{M}$ is the maximal ideal of $R(K(X)[Y,Z])$.
\item An "opposite" example with respect to the preceding one is obtained by setting $D = K[Y,Z, \frac{X}{Y^k}, \frac{X}{Z^k}, k \geq 0]$ and letting $\p  $ be the ideal generated by $\frac{X}{Y^k}, \frac{X}{Z^k}$ for $ k \geq 0$. Define $$T = D[Y^{-1}, Z^{-1}]= K[Y,Z,Y^{-1}, Z^{-1}] + \p.$$ Then, applying Theorem \ref{d+m} for the choice $B=K[Y,Z]$, we obtain
$$ R(D) = R(K[Y,Z])+ X^{-1}K(Y,Z)[X^{-1}]_{(X^{-1})}. $$
\end{enumerate} 
\end{example}


\begin{rem}
\label{ex2}
The assumption $B \subseteq  \mathcal{E} $ is necessary in Theorem \ref{d+m}. Indeed, take $D=T=K[X,Y]= K[Y]+\p$ where $\p=(X)D$.
In this case $B=K[Y]$ is not an Egyptian subring of $T$ and we have $R(D) \supsetneq R(K[Y])+R(\p)$, since, as we observed at the beginning of this section, $R(\p) $ is not prime in $R(D)$.
\end{rem}

We consider now a family of semigroup algebras for which the Krull dimension of the reciprocal complement coincides with the lower bound given in Remark \ref{remdim}. As a consequence we can show that for every pair of integers $n \geq m \geq 0$ there exists an integral domain $D$ of Krull dimension $n$ such that the reciprocal complement has dimension $m$. This family arises as a generalization of item 1 of Example \ref{ex1} to an arbitrary number of variables.

We recall that if $T =L + \m$ is a local domain where $L$ is a field, and $K \subseteq L$ is a smaller field, then $\dim(K+\m)= \dim(T)$ (see for instance \cite{gilmer} or \cite{GH}).


\begin{thm}
\label{dimth2}
Let $K$ be a field and consider indeterminates $Y_1, \ldots, Y_m, X_{m+1}, \ldots, X_n$ for $n > m \geq 1$. Let $D$ be the semigroup algebra generated over $K$ by all the elements of the form $X_i^kY_j$ with $j= 1, \ldots, m$, $i=m+1, \ldots, n$ and $k \in \Z$. Then $$  R(D) = K+ \mathfrak{M} $$ where $\mathfrak{M}$ is the maximal ideal of $R(K(X_{m+1}, \ldots, X_n)[Y_1, \ldots, Y_m])$.
In particular, $$\dim (D) = n > \dim (R(D))=m.$$
\end{thm}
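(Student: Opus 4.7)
The plan is to model the argument on Example \ref{ex1}(1), which is exactly the case $m=1$, $n=2$. I would start by introducing the overring
\[
T = K[X_{m+1}^{\pm 1}, \ldots, X_n^{\pm 1}, Y_1, \ldots, Y_m],
\]
a polynomial ring in $Y_1,\ldots,Y_m$ over the Laurent polynomial ring in $X_{m+1},\ldots,X_n$. Each generator $X_i^{k}Y_j$ of $D$ lies in $T$, hence $D\subseteq T$. The first technical step is to identify the Egyptian subring of $T$. Iterating Corollary \ref{cor2loc} (with base case \cite[Proposition 7]{GLO}), the Laurent polynomial ring $B_0 := K[X_{m+1}^{\pm 1},\ldots,X_n^{\pm 1}]$ is Egyptian with quotient field $L=K(X_{m+1},\ldots,X_n)$; no element of $T$ of positive total $Y$-degree is Egyptian, since by Theorem \ref{reductiontoKalgebra} we have $R(T)=R(L[Y_1,\ldots,Y_m])$ and the reciprocal complement of a polynomial ring over a field contains no element of positive polynomial degree. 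Thus $\mathcal{E}(T)=B_0$.

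Next I would take the prime ideal $\mathfrak{p}=(Y_1,\ldots,Y_m)T$: it satisfies $\mathfrak{p}\cap B_0=(0)$. Applying Theorem \ref{d+m} with $B=K\subseteq B_0$ to the ring $D^\star:=K+\mathfrak{p}$ yields
\[
R(D^\star) = R(K)+R(\mathfrak{p}) = K+\mathfrak{M},
\]
where $\mathfrak{M}=R(\mathfrak{p})$ is the maximal ideal of $R(T)=R(L[Y_1,\ldots,Y_m])$; this is precisely the $\mathfrak{M}$ appearing in the statement (a second application of Theorem \ref{d+m}, this time with $B=B_0$, gives the parallel decomposition $R(T)=L+\mathfrak{M}$, confirming the identification).

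The decisive step is to show $R(D)=R(D^\star)$. The inclusion $R(D)\subseteq R(D^\star)$ is immediate from $D\subseteq D^\star$: every monomial of $D$ is either a constant or has positive $Y$-degree and hence lies in $K\cup\mathfrak{p}$. The reverse inclusion is where I expect the main obstacle. When $n=m+1$ one has $\mathfrak{p}\subseteq D$ on the nose, since every monomial $X^{\mathbf{k}}Y^{\alpha}$ of $\mathfrak{p}$ satisfies $|\mathrm{supp}(\mathbf{k})|\le 1\le|\alpha|$, so the reverse inclusion is automatic just as in Example \ref{ex1}(1). For $n-m\ge 2$, however, $\mathfrak{p}$ strictly contains monomials such as $X_{m+1}X_{m+2}Y_1$ that do not belong to $D$, and one must show directly that $\frac{1}{p}\in R(D)$ for each $p\in\mathfrak{p}$. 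The tool I would employ is the telescoping identity $\frac{1}{ab}=\frac{1}{a(a+b)}+\frac{1}{b(a+b)}$ applied iteratively, together with the observation that $p\cdot Y_1^{N}\in D$ for $N$ sufficiently large (since then in every monomial the total $Y$-degree dominates the $X$-support), reducing $\frac{1}{p}$ to a combination of reciprocals already available in $R(D)$ up to controllable error terms.

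Finally, the dimension computation follows once $R(D)=K+\mathfrak{M}$ is established. The standard fact about $K+\mathfrak{m}$ constructions recalled just before the statement (\cite{gilmer,GH}) gives $\dim R(D)=\dim(L+\mathfrak{M})=\dim R(L[Y_1,\ldots,Y_m])=m$, the last equality by \cite[Theorem 4.4]{EGL}. The equality $\dim D=n$ is standard for semigroup algebras whose semigroup has rank $n$ (see \cite[Section 21]{gilmersem}), so $\dim D=n>m=\dim R(D)$, as claimed.
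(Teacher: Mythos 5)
Your setup is exactly the paper's: the same overring $T=K[X_{m+1}^{\pm1},\ldots,X_n^{\pm1},Y_1,\ldots,Y_m]$, the same identification $\mathcal{E}(T)=K[X_{m+1}^{\pm1},\ldots,X_n^{\pm1}]$, the same prime $\mathfrak{p}=(Y_1,\ldots,Y_m)T$, and the same appeal to Lemma \ref{rpprimo} and Theorem \ref{d+m}; all of that part is sound, as is the final dimension count. The gap is precisely the step you flag as the main obstacle, and it is a genuine one: for $n-m\geq 2$ you only establish $R(D)\subseteq R(D^\star)=K+\mathfrak{M}$, and the reverse inclusion is left as a sketch that, as described, does not go through. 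You are right that $D=K+\mathfrak{p}$ holds only for $n=m+1$ (a monomial $X^{\mathbf{k}}Y^{\alpha}$ lies in $D$ exactly when the number of $X_i$'s occurring in it is at most $|\alpha|$, so $X_{m+1}X_{m+2}Y_1\in\mathfrak{p}\setminus D$). But your repair does not close the gap: knowing $pY_1^N\in D$ gives $\frac{1}{pY_1^N}\in R(D)$, and passing from this to $\frac{1}{p}$ would require multiplying by $Y_1^N$, which is not an element of $R(D)$ since $Y_1$ is not Egyptian; the identity $\frac{1}{ab}=\frac{1}{a(a+b)}+\frac{1}{b(a+b)}$ runs in the wrong direction for that purpose, and any factorization $p=ab$ inside $T$ (say $X_{m+1}X_{m+2}Y_1=X_{m+1}\cdot(X_{m+2}Y_1)$) produces factors and sums $a+b$ lying outside $D$, so the resulting ``error terms'' are not visibly in $R(D)$ and you are back to the same membership problem. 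Until you actually exhibit $\frac{1}{p}\in R(D)$ for every $p\in\mathfrak{p}$ (or reach $\mathfrak{M}\subseteq R(D)$ by some other route), what you have proved is only $R(D)\subseteq K+\mathfrak{M}$, hence $\dim R(D)\leq m$, not the stated equality of rings.

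For what it is worth, your worry is not a misreading of the problem. The paper's own proof takes $\mathfrak{p}$ to be the maximal ideal of $D$ while simultaneously writing $T=K[X_{m+1}^{\pm1},\ldots,X_n^{\pm1}]+\mathfrak{p}$ and applying Theorem \ref{d+m} to $D=K+\mathfrak{p}$; this tacitly identifies the maximal ideal of $D$ with the ideal $(Y_1,\ldots,Y_m)T$, and that identification is exactly the one you observe holds only when $n=m+1$. So for $n-m\geq 2$ the inclusion $K+\mathfrak{M}\subseteq R(D)$ requires a genuine additional argument (and is the real mathematical content of the theorem in that range); neither your proposal nor a literal reading of the paper's proof supplies it.
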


\begin{proof}
Let $\p$ be the maximal ideal of $D$ generated by all the elements $X_i^kY_j$. Set 
$$T = D[X_{m+1}, \ldots, X_n, X_{m+1}^{-1},\ldots, X_n^{-1}] = K[X_{m+1}, \ldots, X_n, X_{m+1}^{-1},\ldots, X_n^{-1}] + \p.$$ 
All the elements $Y_1, \ldots, Y_m$ are not Egyptian in $T$ since they are not Egyptian in the overring $K(X_{m+1}, \ldots, X_n)[Y_1, \ldots, Y_m]$. It follows that all the elements of $\p$ are not Egyptian in $T$ and $K[X_{m+1}, \ldots, X_n, X_{m+1}^{-1},\ldots, X_n^{-1}]$ is the subring of Egyptian elements of $T$. 
Hence, by Lemma \ref{rpprimo} $$R(T)= R(K(X_{m+1}, \ldots, X_n)[Y_1, \ldots, Y_m]) = K(X_{m+1}, \ldots, X_n) + \mathfrak{M}$$ where $\mathfrak{M} = R(\p)$.
By Theorem \ref{d+m}, 
we get
$$ R(D) = R(K+\p)= R(K)+R(\p) = K + \mathfrak{M}. $$
 Since $D$ is a semigroup algebra over $K$ defined by a monoid of rank $n$, we get $\dim(D)=n$. By what recalled above on $K+\m$ constructions, we have $$\dim(R(D))= \dim(R(K(X_{m+1}, \ldots, X_n)[Y_1, \ldots, Y_m])) = m, $$
 where the last equality follows by \cite[Theorem 4.4]{EGL} or by Theorem \ref{dimth1}.
\end{proof}

\begin{cor}
\label{dimn&m}
Given any pair of integers $n \geq m \geq 0$, there exists an integral domain $D$ of Krull dimension $n$ such that its reciprocal complement $R(D)$ has Krull dimension $m$.
\end{cor}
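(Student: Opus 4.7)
The plan is to split the argument into cases according to the relationship between $n$ and $m$, and for each case to exhibit an explicit example using results already established in the paper (or in the cited literature).

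First I would dispose of the boundary cases. If $n = m$, then taking $D = K[X_1, \ldots, X_n]$ works: by \cite[Theorem 4.4]{EGL} (which has been cited repeatedly throughout the paper) we have $\dim R(D) = n = \dim D$. If $m = 0$, then one needs an $n$-dimensional Egyptian domain; any local $n$-dimensional domain such as $D = K[X_1, \ldots, X_n]_{(X_1, \ldots, X_n)}$ does the job, since semilocal domains are Egyptian (recalled in the Introduction, from \cite[Corollary 1]{GLO}), so $R(D) = \mathcal{Q}(D)$ is a field and has Krull dimension $0$.

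The remaining case $n > m \geq 1$ is exactly the content of Theorem \ref{dimth2}: for any such pair, the semigroup algebra $D$ generated over $K$ by $X_i^kY_j$ (with $1 \leq j \leq m$, $m+1 \leq i \leq n$, $k \in \mathbb{Z}$) has $\dim D = n$ and $\dim R(D) = m$. So the corollary follows by simply invoking that theorem.

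Since every case is covered by either a known example or a theorem proved just above, there is essentially no obstacle here; the corollary is a direct packaging of Theorem \ref{dimth2} together with the trivial boundary cases, and the only thing to double-check is that the $m=0$ case is not excluded by the hypotheses of Theorem \ref{dimth2} (which requires $m \geq 1$), justifying the need for the separate local-domain example.
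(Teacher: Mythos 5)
Your proposal is correct and matches the paper's own proof essentially verbatim: the paper also splits into the cases $m=0$ (any $n$-dimensional local, hence Egyptian, domain), $m=n$ (the polynomial ring, or any semigroup algebra satisfying Theorem \ref{dimth1}), and $n>m>0$ (the domains of Theorem \ref{dimth2}). Your added remark about why the $m=0$ case needs separate treatment is a fair observation but does not change the argument.
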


\begin{proof}
If $m=0$, choose $D$ to be any Egyptian domain of dimension $n$ (for instance a local domain). If $m=n$, we can choose $D=K[X_1, \ldots, X_n]$ or any semigroup algebra satisfying the assumptions of Theorem \ref{dimth1}. If $n > m > 0$, we can use the integral domains constructed in Theorem \ref{dimth2}.
\end{proof}

 \section{The case when $R(D)$ is a DVR}
 
 In this section we prove a sort of converse of the main result of \cite{Eps3}, stating that the reciprocal complement of a Euclidean domain is either a field or a DVR. Here, we prove that if $D$ is an integral domain such that $R(D)$ is a DVR (not a field), then the localization $E^{-1}D$ has to be isomorphic to a polynomial ring in one variable over a field. The main argument of the proof is the use of the discrete valuation of $R(D)$ to define a Euclidean function on $D$.
 
 We recall that an integral domain $D$ is a Euclidean domain if there exist a function $f: D \setminus \lbrace 0 \rbrace \to \Z$ such that for every nonzero $a,b \in D$:
 \begin{enumerate}
 \item $f(ab)\geq f(a).$
 \item There exist $q,r \in D$ such that $a=bq+r$ and either $r=0$ or $f(r) < f(b)$.
\end{enumerate}  
 
\begin{thm}
 \label{dvrcase}
 Let $D$ be an integral domain and denote by $E$ the set of the Egyptian elements of $D$. Suppose that $D$ is not Egyptian. The following conditions are equivalent:
 \begin{enumerate}
 \item[(1)] $R(D)$ is a DVR.
 \item[(2)] $E^{-1}D$ is a Euclidean domain.
 \item[(3)] $E^{-1}D$ is isomorphic to a polynomial ring in one variable over a field.
\end{enumerate}  
 \end{thm}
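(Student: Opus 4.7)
The plan is to establish the cycle $(3)\Rightarrow(2)\Rightarrow(1)\Rightarrow(3)$. The implication $(3)\Rightarrow(2)$ is immediate since a polynomial ring in one variable over a field is Euclidean under the degree function. For $(2)\Rightarrow(1)$, I would combine $R(E^{-1}D)=R(D)$ from Theorem \ref{reductiontoKalgebra} with the result of \cite[Theorem 2.10]{Eps3} (recalled in the introduction) to conclude that $R(D)$ is a field or a DVR; if it were a field, then for every nonzero $d\in D$ the unit $1/d\in R(D)$ would have its reciprocal $d$ back in $R(D)$, forcing $D\subseteq R(D)=\mathcal{Q}(D)$ and making $D$ Egyptian, contradicting the hypothesis. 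Hence $R(D)$ is a DVR.

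The substantive work lies in $(1)\Rightarrow(3)$. Using Theorem \ref{reductiontoKalgebra} I replace $D$ by $E^{-1}D$, so the Egyptian elements of $D$ are exactly the nonzero elements of the field $K:=\mathcal{Q}(\mathcal{E})$, and these are precisely the units of $R(D)$. Let $v$ denote the normalized valuation of the DVR $R(D)$, extended to the fraction field. I would first identify the residue field as $K$: the natural inclusion $K\hookrightarrow R(D)/\m$ is surjective because every ring generator $1/d$ of $R(D)$ reduces modulo $\m$ either to $0$ (when $d$ is non-Egyptian, since then $1/d\in\m$ by Theorem \ref{islocal!!}) or to an element of $K$ (when $d$ is Egyptian).

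Next I would locate a uniformizer inside $D$ itself. Since $\m$ is generated by the reciprocals of non-Egyptian elements, a standard Nakayama/$\m\neq\m^2$ argument in the DVR rules out all those generators having $v\geq 2$, so some $X\in D$ satisfies $v(1/X)=1$, equivalently $v(X)=-1$. Setting $\deg(a):=-v(a)$ gives a function $D\setminus\{0\}\to\Z_{\geq 0}$ that vanishes precisely on $K^*$. The key computation is a division-by-$X$ step: for $a\in D$ with $\deg(a)=n\geq 1$ the element $a/X^n$ has valuation $0$, so its class in $R(D)/\m$ is some $\alpha\in K^*$; consequently $a/X^n-\alpha\in\m$ and $\deg(a-\alpha X^n)\leq n-1$. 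Inducting on the degree yields $a=\alpha_0+\alpha_1 X+\cdots+\alpha_n X^n$ with $\alpha_i\in K$, so $D=K[X]$; the transcendence of $X$ over $K$ follows from the ultrametric inequality, since the nonzero terms of any would-be algebraic relation would have pairwise distinct valuations and hence could not cancel.

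The main obstacle is the division step, which hinges on the prior identification $R(D)/\m=K$: without that identification one cannot guarantee that the coefficient $\alpha$ extracted from the residue field actually lies inside $D$. Once the identification is in hand the induction closes at degree zero, and the polynomial-ring structure emerges together with an explicit Euclidean function $\deg$, so the informal route $(1)\Rightarrow(2)$ via the discrete valuation of $R(D)$, as announced in the opening paragraph of the section, is implicit in the same argument.
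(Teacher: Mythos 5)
Your proposal is correct, but the route through the hard implication is genuinely different from the paper's. The paper proves $(1)\Rightarrow(2)$ directly: it builds the Euclidean function $f(d)=v(1/d)$, finds $y\in D$ with $v(1/y)=1$, and then verifies the division property by writing $a=y^{e_1}\theta_1$, $b=y^{e_2}\theta_2$ with $\theta_i$ units of $R(D)$, expanding each unit as $u+\sum_i a_i/y^i+(\text{error in }\m^{e+1})$, and recursively solving for the coefficients of a quotient $q\in K[y]$ so that $f(a-bq)<f(b)$; the implication $(1)\Rightarrow(3)$ is then deduced from this division algorithm. You instead prove $(1)\Rightarrow(3)$ directly and get $(2)$ for free, by first identifying the residue field $R(D)/\m$ with $K$ (a clean observation the paper never isolates, though its coefficient-matching is secretly doing the same thing), locating a uniformizer $1/X$ with $X\in D$ via $\m\neq\m^2$ (the paper uses item 3 of Proposition \ref{primeideals} for this step instead), and then peeling off leading coefficients by descending induction on $\deg(a)=v(1/a)$. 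Your argument is more conceptual and shorter, at the mild cost of relying on the fact that a DVR is the full valuation ring of its valuation on $\mathcal{Q}(D)$ so that $v(a/X^n)=0$ forces $a/X^n\in R(D)^*$ — which is fine, since $\mathcal{Q}(R(D))=\mathcal{Q}(D)$. Two small points to tidy: the phrase ``these are precisely the units of $R(D)$'' is an overstatement (the units of $R(D)$ are all elements of valuation zero, not just $K^*$; what is true is that an element of $D$ is a unit of $R(D)$ exactly when it is Egyptian), and for the transcendence of $X$ you could alternatively quote Lemma \ref{algegyp} as the paper does, though your ultrametric argument is equally valid.
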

 
 \begin{proof}
By Theorem \ref{reductiontoKalgebra} we know that $R(D)=R(E^{-1}D)$. Therefore, we can assume without loss of generality that $D=E^{-1}D$ is a $K$-algebra whose only Egyptian elements are the nonzero elements of the field $K$, and $D \neq K$. The implication  (3) $\to$ (2) is straightforward while the implication (2) $\to$ (1) is the content of \cite[Theorem 2.10]{Eps3}.

Let us prove the implication (1) $\to$ (2). Suppose $R(D)=V$ is a DVR with associated valuation $v$ and denote by $\m$ its maximal ideal. Let $\pi= \sum_{i=1}^c \frac{1}{x_i} \in R(D)$, with $x_i \in D$, be an element such that $v(\pi)=1$. In particular $\pi \in \m$ and also $ \frac{1}{x_i} \in \m $ for every $i$ by item 3 of Proposition \ref{primeideals}. In particular there exists at least one $i$ such that $ v(\frac{1}{x_i})=1 $ and we can change generator for $\m$ and suppose $\pi = \frac{1}{y}$ for some $y \in D \setminus K$.

Define now a function $f: D \setminus \lbrace 0 \rbrace \to \Z$ as $f(d):= v(\frac{1}{d})$. Pick nonzero $a,b \in D$. Since $\frac{1}{b} \in V$, we obtain immediately $$f(ab)= v \left( \frac{1}{ab}\right) = v\left(\frac{1}{a}\right) + v\left(\frac{1}{b}\right) \geq v\left(\frac{1}{a}\right) = f(a). $$
To prove that $D$ is a Euclidean domain we need to find a suitable $q \in D$ such that
$$ f(a-bq) =  v \left( \frac{1}{a-bq}\right) < v\left(\frac{1}{b}\right) = f(b). $$
Set $e_1 = f(a)$ and $e_2 = f(b)$.
If $b$ divides $a$ in $D$ we simply choose $q=\frac{a}{b}$ and $r=0$, while if $e_1 < e_2$ we choose $q=0$ and $r=a$. Thus, we can assume that $b$ does not divide $a$ and $e:= e_1 - e_2 \geq 0 $. These two conditions exclude the cases where one of $a, b$ is a unit in $D$. Indeed, if $b$ is a unit, then it divides $a$, while if $a $ is a unit and $b$ is not, then $\frac{1}{a}$ is a unit in $R(D)$ and $ \frac{1}{b} \in \m $, hence $e_1 = 0 < e_2 $.

Thus $e_1 \geq e_2 >0 $ and, since $ \frac{1}{a}, \frac{1}{b} \in \m = (\frac{1}{y})V$, we can write $a = y^{e_1} \theta_1 $, $b = y^{e_2} \theta_2 $ where $\theta_1, \theta_2$ are units in $R(D)$.
From \cite[Lemma 2.3]{EGL}, Theorem \ref{islocal!!}, and Theorem \ref{reductiontoKalgebra}, we know that each unit in $R(D)$ can be written as a sum of a nonzero element of $K$ and some element in $\m$. But any element of $\m$ is obtained as the product of a power of $ \frac{1}{y}$ and a unit of $V$. Therefore we can write $$a = y^{e_1} \left( u_1 + \sum_{i=1}^{e} \frac{a_i}{y^i} + \alpha \right), \quad b = y^{e_2} \left( u_2 + \sum_{i=1}^{e} \frac{b_i}{y^i} + \beta \right) $$ where $u_1, u_2 \in K \setminus \lbrace 0 \rbrace$, $a_i, b_i \in K$, and $\alpha, \beta \in \m^{e+1}$. Set $q = y^{e} (u + \sum_{i=1}^{e} \frac{c_i}{y^i} )$ where $u, c_1, \ldots, c_{e} \in K$ are elements that we are going to determine in function of $u_1,u_2, a_i, b_i$. By definition it is clear that $q \in K[y] \subseteq D$.
Thus, $a-bq = y^{e_1}\eta $ where 
$$ \eta = u_1 + \sum_{i=1}^{e} \frac{a_i}{y^i} + \alpha - \left( u_2 + \sum_{i=1}^{e} \frac{b_i}{y^i} + \beta \right)\left(u + \sum_{i=1}^{e} \frac{c_i}{y^i} \right).  $$
We clearly have $ v \left( \frac{1}{a-bq}\right) = e_1 - v(\eta) < v\left(\frac{1}{b}\right) $ if and only if $v(\eta)>e_1 -e_2 = e.$
Since $v(\alpha), v(\beta) > e$, it is sufficient to show that for some choice of $u, c_1, \ldots, c_{e} \in K$, the term
$$ \eta' = u_1 + \sum_{i=1}^{e} \frac{a_i}{y^i} - \left( u_2 + \sum_{i=1}^{e} \frac{b_i}{y^i} \right)\left(u + \sum_{i=1}^{e} \frac{c_i}{y^i} \right)\in \m^{e+1}.  $$ We can rewrite
$$ \eta' = u_1 - u_2u + \sum_{i=1}^{e} \frac{1}{y^i} \left[ a_i - ub_i -u_2c_i - \sum_{\scriptstyle j+k = i \atop \scriptstyle  j,k \geq 1} b_j c_k \right] + \gamma, $$ with $\gamma \in \m^{e+1}$.
Hence, we can first set $u:= u_1 u_2^{-1}.$ Then we observe that working inductively for $i=1, \ldots, e$, we can choose $c_i$ in such a way that the coefficient in front of 
$\frac{1}{y^i}$ is zero. In particular,
$$ c_i:= u_2^{-1}\left(a_i - ub_i - \sum_{\scriptstyle j+k = i \atop \scriptstyle  j,k \geq 1} b_j c_k \right) $$ can be determined univocally in function of $a_i, b_j$ and $c_k$ with $k < i$. It follows that, for this choice of coefficients, $\eta' = \gamma \in \m^{e+1}$.
This choice of $q $ is such that $f(a-bq) < f(b)$ and the proof of the implication (1) $\to$ (2) is complete.

Finally, we prove the implication (1) $\to$ (3). Let us adopt the same notation of the proof of (1) $\to$ (2). Then we know that there exists $y \in D$ such that $v(\frac{1}{y})=1.$ Since $y$ is not a unit in $D$, then it is not algebraic over the field $K$. Hence $K[y]$ is isomorphic to a polynomial ring over $K$. We clearly have $K[y] \subseteq D$. 
Pick then $x \in D$. 
By the proof of (1) $\to$ (2), we can consider the Euclidean function $f$ defined on $D$ to  
find $q,r \in D$ such that $x=yq+r$ and either $r=0$ or $f(r) < f(y) = v(\frac{1}{y})=1$. Hence, if $r \neq 0$, we must have $v(\frac{1}{r})=0$ and therefore $r \in D \cap R(D) = K$.
But in the proof of (1) $\to$ (2) it was shown that $q$ can be chosen such that $q \in K[y]$. It follows that $x = yq+r \in K[y]$.
 \end{proof}





  
  
  \section*{Acknowledgements}
  The author is supported by the grants MAESTRO NCN-UMO-2019/34/A/ST1/00263 - Research in Commutative Algebra and Representation
Theory and NAWA POWROTY-PPN/PPO/2018/1/00013/U/00001 – Applications of
Lie algebras to Commutative Algebra.

The author would like to thank Neil Epstein and K. Alan Loper for interesting conversations about the content of this article.

\end{document}